\newtheorem{theorem}{Theorem}[section]
\newtheorem{proposition}[theorem]{Proposition}
\newtheorem{lemma}[theorem]{Lemma}
\newtheorem{corollary}[theorem]{Corollary}
\newtheorem{D}[theorem]{Definition}
\newenvironment{definition}{\begin{D} \rm }{\end{D}}
\newtheorem{R}[theorem]{Remark}
\newenvironment{remark}{\begin{R}\rm }{\end{R}}
\newtheorem{E}[theorem]{Example}
\newenvironment{example}{\begin{E}\rm }{\end{E}}
\numberwithin{equation}{section}
\def\Zee{\mathbb{Z}}
\def\Q{\mathbb{Q}}
\def\Cee{\mathbb{C}}
\def\Pee{\mathbb{P}}
\def\Ker{\operatorname{Ker}}
\def\Coker{\operatorname{Coker}}
\def\Hom{\operatorname{Hom}}
\def\Ext{\operatorname{Ext}}
\def\Aut{\operatorname{Aut}}
\def\Pic{\operatorname{Pic}}
\def\Gr{\operatorname{Gr}}
\def\im{\operatorname{Im}}
\def\Spec{\operatorname{Spec}}
\def\scrO{\mathcal{O}}
\def\spcheck{^{\vee}}
\def\hX{\widehat{X}}
\def\hY{\widehat{Y}}
\title{Deformations of   Calabi-Yau varieties with isolated  log canonical singularities} 
\begin{document}
\author[R. Friedman]{Robert Friedman}
\address{Columbia University, Department of Mathematics, New York, NY 10027}
\email{rf@math.columbia.edu}
\author[R. Laza]{Radu Laza}
\address{Stony Brook University, Department of Mathematics, Stony Brook, NY 11794}
\email{radu.laza@stonybrook.edu}

\begin{abstract}
Recent progress in the  deformation theory of Calabi-Yau varieties $Y$ with canonical singularities has highlighted the key role played by the higher Du Bois and higher rational singularities, and especially by the so-called  $k$-liminal singularities  for $k\ge 1$. The goal of this paper is to show  that certain aspects of this study extend naturally to the $0$-liminal case as well, i.e. to Calabi-Yau varieties $Y$ with Gorenstein log canonical, but not canonical, singularities.  In particular, we show the existence of first order smoothings of $Y$ in the case of isolated $0$-liminal hypersurface singularities, and extend Namikawa's unobstructedness theorem for deformations of singular Calabi-Yau threefolds $Y$ with canonical singularities   to the case  where $Y$ has an isolated $0$-liminal lci singularity under suitable hypotheses. Finally, we describe an interesting series of examples. 
\end{abstract}
\thanks{Research of the second author is supported in part by NSF grant DMS-2101640.}  

\bibliographystyle{amsalpha}
\maketitle

\section*{Introduction}

A \textsl{generalized Calabi-Yau variety} is a  compact  analytic space  $Y$ with isolated   Gorenstein  singularities and trivial dualizing sheaf. In case the singularities are in addition  rational  (or equivalently canonical), their deformation theory has been studied extensively, beginning with the pioneering work of Burns-Wahl on $K3$ surfaces with rational double points \cite{BurnsWahl}. In dimension $3$, Namikawa and Steenbrink studied the smoothability of such varieties in a series of papers \cite{namtop}, \cite{NS}, \cite{namstrata}. More recently, the authors have revisited and extended  their results and partially generalized them to higher dimensions \cite{FL}. A crucial new ingredient is the notion of higher Du Bois and higher rational singularities. For a $k$-Du Bois or $k$-rational singularity, the integer  $k$ is a measure  of the complexity of the singularity: as  $k$ increases, the singularity becomes simpler in the sense that, once $k$ reaches a certain threshold, the singularity is  an ordinary double point, and for larger $k$   it is a  smooth point. As an example of some of our recent results,  it follows from  Corollary 1.5 in \cite{FL22c} that, if $Y$ has local complete intersection $1$-Du Bois singularities, then its deformations are unobstructed. By contrast, if $Y$ has rational hypersurface singularities  which are \textbf{not} $1$-rational, then \cite[Theorem 5.8]{FL} gives necessary and sufficient conditions for the existence of  first order smoothings under some mild technical assumptions.  Somewhat surprisingly, it is thus easier to prove results about the existence of first order smoothings for more complicated singularities. Note that there is only a small overlap in the  two results above: they apply simultaneously only to  singularities which are  $1$-Du Bois but not $1$-rational.  Such singularities are defined to be \textsl{$1$-liminal} \cite{FL22d}. For example,  ordinary double points in dimension $n> 3$ are $1$-rational; and hence $1$-Du Bois, but they are not $1$-liminal.    Generalizing a result of Rollenske-Thomas  \cite{RollenskeThomas},  we showed in   \cite{FL23} that, for $k\ge 1$,  interesting deformation theoretic phenomena occur  for $Y$  which are \textsl{$k$-liminal} in the terminology of \cite{FL22d}, i.e.\ $k$-Du Bois but not $k$-rational, which includes the case of ordinary double points in odd dimension studied in \cite{RollenskeThomas}. 

Unfortunately, generalized Calabi-Yau varieties with  rational  singularities are birationally at least as complicated as smooth Calabi-Yau varieties and it is often quite difficult to construct them.  In dimension $2$, $K3$ surfaces with rational double points can be constructed via the period map. In   dimension $3$, smooth rational curves on a Calabi-Yau threefold with normal bundle $\scrO(-1) \oplus\scrO(-1)$ can be contracted to ordinary double points. In general, however, aside from some several ingenious but \emph{ad hoc} methods for constructing hypersurfaces with many ordinary double points, there seems to be no general method for constructing such varieties. Thus, in this paper, we go in the other direction from \cite{RollenskeThomas}, \cite{FL}, \cite{FL23} and consider  the boundary case where $Y$ is  assumed to have isolated Gorenstein Du Bois but nonrational singularities.  By a theorem of Ishii \cite{Ishii85}, these singularities are log canonical but not canonical. In the terminology of \cite{FL22d}, these singularities are $0$-Du Bois but not $0$-rational, or equivalently  are $0$-liminal.  In addition, we impose a few extra technical conditions, leading to what we call a \textsl{strict $0$-liminal  Calabi-Yau variety} (Definition~\ref{definestrict}).  Such varieties tend to be birationally much simpler than Calabi-Yau varieties. In particular, a  resolution of singularities of such a variety  tends to be rational or at least rationally connected. 

In dimension $2$, normal Gorenstein singularities which are Du Bois  but nonrational   are either simple elliptic or cusp singularities \cite[Theorem 3.8]{Steenbrink}, and the local and global aspects of their deformation theory  have   been extensively studied. For example, by a theorem of Wahl \cite[Theorem 5.6]{Wahl81}, if $Y$ is a compact analytic surface with a unique isolated singular point which is a simple elliptic or cusp singularity and $\omega_Y \cong \scrO_Y$, then $H^2(Y; T^0_Y) = 0$, where $T^0_Y$ is the tangent sheaf of $Y$, which implies that  the natural map $\mathbb{T}^1_Y = \Ext^1(\Omega^1_Y, \scrO_Y) \to H^0(Y; T^1_Y)$ is surjective. Moreover,  the deformation functor of $Y$ is smooth over the functor of local deformations of the singular point. If  the singular point is a local complete intersection, then $\mathbb{T}^2_Y =0$, where $\mathbb{T}^2_Y = \Ext^2(\Omega^1_Y, \scrO_Y)$ is the obstruction space for the deformation functor, so that the deformations of $Y$ are unobstructed.  More generally,  if the singularity is locally smoothable, for example if it is a local complete intersection singularity, then $Y$ is smoothable, and all smoothings will be $K3$ surfaces. In this way,  the local  theory of deformations of simple elliptic and cusp singularities is closely connected to the study of degenerations of $K3$ surfaces. 
In the other direction, one main focus of the study of simple elliptic or cusp singularities was to find necessary and sufficient conditions for (local) smoothability in case the singularity is not a local complete intersection, cf.\ for example Pinkham \cite{Pinkham}, Looijenga \cite{Looijenga}, and more recently \cite{GHK}, \cite{Engel}. Roughly speaking,  the idea is to find a compact complex surface $Z$ with an isolated singular point analytically isomorphic to the relevant simple elliptic or cusp singularity and to show that local smoothings of the singular point are equivalent to smoothings of  $Z$   to a del Pezzo surface or more generally a smooth rational surface with an effective anticanonical divisor.  In particular, if the local singularity is smoothable, then  $Z$ can be smoothed to a log Calabi-Yau surface.

In dimension $2$, a smoothing of a generalized Calabi-Yau surface with rational double points has finite monodromy, whereas smoothings of simple elliptic and cusp singularities have infinite monodromy.  In higher dimensions, a  smoothing of a generalized Calabi-Yau variety with   rational  singularities  can have infinite monodromy, although the monodromy will never be maximal unipotent (cf.\ \cite[Remark 4.11]{FL23}). Moreover, such varieties are   at finite distance in the Weil-Petersen metric in an appropriate sense (\cite{tosatti}, \cite{wang}, \cite{KLSV}, as well as \cite[Theorem 2.2]{Odaka-CY}, \cite{yzhang}). Thus, in some sense such varieties lie in the interior of the moduli space. By contrast, in the nonrational Du Bois case, the monodromy is never finite but is not necessarily maximal unipotent (cf.\ Remark~\ref{infmono}). One interpretation of this fact is that strict $0$-liminal  Calabi-Yau varieties lie at the boundary of the moduli space of Calabi-Yau manifolds. In terms of the minimal model program,  strict $0$-liminal  Calabi-Yau varieties are not dlt and  typically one has to pass to a semistable model to insure the dlt condition. For the special case of $d$-semistable normal crossing varieties, the deformation theory is understood by \cite{KawamataNamikawa}. We make a few comments about smoothing a   strict $0$-liminal  Calabi-Yau variety  with an isolated $0$-liminal singularity versus smoothing the normal crossing model in a very special situation (see \S\ref{section4}).

Our first goal of   this paper is to generalize some of the results of \cite{FL} to the $0$-liminal case, i.e.\ the case where $Y$ has log canonical but not canonical singularities. For such $Y$, we will always denote by $\pi\colon \hY \to Y$  a log resolution of $Y$ with exceptional divisor $E = \bigcup_iE_i$, where the $E_i$ are the components of $E$. The starting assumption is that $Y$ has isolated locally complete intersection (lci) $0$-liminal singularities. This is needed to guarantee that there is a good local deformation theory (in particular the singularities of $Y$ are locally smoothable) and that the techniques involving higher Du Bois and higher rational singularities apply.  While our main concern in this paper is with the lci case,   a recent paper of Coughlan-Sano \cite{CS} gives examples of  cones over $K3$ surfaces which are not (locally) smoothable, and we briefly describe the relation between some of these examples and  our point of view in \S\ref{section2.5}.  

Under the appropriate hypotheses, many of the local and global results in \cite{FL} carry over to the $0$-liminal case. For example, illustrating the principle that it is easier globally to find first order smoothings for more complicated singularities, we are able to prove results about the existence of first order smoothings in the  case of hypersurface singularities (Theorem~\ref{maintheorem} and Theorem~\ref{smoothable}). In particular, we show the following:
\begin{theorem}\label{thm1}
Let $Y$ be a   strict $0$-liminal  Calabi-Yau variety (Definition~\ref{definestrict}) whose unique singular point is a hypersurface singularity. Suppose that either: {\rm(i)} the singularity is   weighted homogeneous   or {\rm(ii)} $Y$ is $3$-dimensional and $H^2(E_i; \scrO_{E_i}) = 0$ for all $i$. Then $Y$ has a first order smoothing (Definition~\ref{defstsm}). 
\end{theorem}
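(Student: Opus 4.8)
The plan is to produce a first order smoothing as a lift, through the local-to-global Ext spectral sequence, of a distinguished local smoothing direction, and to establish the vanishing of the resulting obstruction by a Hodge-theoretic computation on the log resolution $\pi\colon\hY\to Y$. First I would invoke the exact sequence of low-degree terms
$$0\to H^1(Y,T^0_Y)\to\mathbb{T}^1_Y\xrightarrow{\ \alpha\ }H^0(Y,T^1_Y)\xrightarrow{\ \partial\ }H^2(Y,T^0_Y)$$
attached to the spectral sequence $H^i(Y,\mathcal{E}xt^j(\Omega^1_Y,\scrO_Y))\Rightarrow\Ext^{i+j}(\Omega^1_Y,\scrO_Y)$, in which $T^0_Y$ is the tangent sheaf and $T^1_Y=\mathcal{E}xt^1(\Omega^1_Y,\scrO_Y)$. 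Since the singular point is isolated, $T^1_Y$ is a skyscraper supported there and $H^0(Y,T^1_Y)$ is its stalk; since the singularity is a hypersurface singularity it is lci and locally smoothable, so the unit of the Tjurina algebra furnishes a nonzero smoothing direction $\sigma\in H^0(Y,T^1_Y)$. By Definition~\ref{defstsm} it suffices to lift $\sigma$ to $\mathbb{T}^1_Y$, and by exactness this reduces to the single vanishing statement $\partial(\sigma)=0$ in $H^2(Y,T^0_Y)$.

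The heart of the argument is therefore to control $H^2(Y,T^0_Y)$, which I would carry out on $\hY$. Using that $Y$ is normal with log canonical singularities one relates $T^0_Y$ to $\pi_*T_{\hY}(-\log E)$ and feeds the Leray spectral sequence for $\pi$, together with the exact sequences of logarithmic (co)tangent sheaves associated with $E=\bigcup_iE_i$, into the computation of $H^2$. The strict $0$-liminal hypothesis — Du Bois but nonrational, i.e.\ $0$-Du Bois but not $0$-rational — is what pins down the higher direct images $R^j\pi_*$ of the relevant sheaves and lets one express the part of the obstruction that can meet $\sigma$ in terms of the Hodge theory of $E$ and of the bundles $\Omega^p_{\hY}(\log E)$.

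In case {\rm(i)} I would exploit the $\Cee^*$-action coming from the weighted homogeneous structure of the singularity. All of the local groups entering the computation near the singular point — in particular $T^1_Y$ and the target pieces of $H^2(Y,T^0_Y)$ — acquire a weight grading, and $\sigma$ lies in a single explicit weight, that of the unit. The plan is to show that $\partial$ preserves this grading and that the relevant graded summand of the obstruction vanishes, so that $\partial(\sigma)=0$ for weight reasons; this is the mechanism by which weighted homogeneity forces the smoothing direction to be unobstructed, in parallel with the $1$-liminal computation of \cite[Theorem 5.8]{FL}. In case {\rm(ii)}, with $Y$ a threefold, the obstruction simplifies after passing to $\hY$ and applying Serre duality, where $T_{\hY}(-\log E)\cong\Omega^{2}_{\hY}(\log E)\otimes\omega_{\hY}^{-1}(-E)$: the contribution of $H^2(Y,T^0_Y)$ that can obstruct $\sigma$ is then governed by $\bigoplus_iH^2(E_i,\scrO_{E_i})$, so the hypothesis $H^2(E_i,\scrO_{E_i})=0$ for all $i$ yields $\partial(\sigma)=0$.

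The main obstacle I anticipate is precisely the cohomological identification in the second step: making rigorous the relation between the image of $\partial$ and the Hodge-theoretic data carried by $E$, and verifying that the distinguished class $\sigma$ lands in the summand that the hypotheses annihilate. In case {\rm(i)} the delicate point is that the $\Cee^*$-action is only local, so one must check that the grading propagates far enough through the global cohomology for the weight argument to apply to $\partial(\sigma)$; in case {\rm(ii)} the delicate point is the reduction of the a priori larger group $H^2(Y,T^0_Y)$ to the summands $H^2(E_i,\scrO_{E_i})$.
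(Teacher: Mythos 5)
Your reduction to the single vanishing $\partial(\sigma)=0$ in $H^2(Y;T^0_Y)$ is where the argument breaks down, for two reasons. First, the mechanism you propose for killing the obstruction does not match how the hypotheses actually function. The condition $H^2(E_i;\scrO_{E_i})=0$ (resp.\ weighted homogeneity) is used purely \emph{locally}: it guarantees (Proposition~\ref{prop2.7}, resp.\ Corollary~\ref{wtdcor}) that the subspace $K'=\Ker\{H^2_E(\hX;\Omega^{n-1}_{\hX})\to H^{n+1}_E(\hX)/\im H^1_E(\hX;\Omega^n_{\hX})\}$ maps isomorphically onto $H^2_E(\hX;\Omega^{n-1}_{\hX}(\log E))$, a statement about the Hodge theory of the singularity germ (via $\Gr^1_FH^{n+1}_E(\hX)\cong\bigoplus_iH^2(E_i;\scrO_{E_i})$ in case (ii)). It does not control the global obstruction group $H^2(Y;T^0_Y)$, and no identification of ``the part of $H^2(Y;T^0_Y)$ that can meet $\sigma$'' with $\bigoplus_iH^2(E_i;\scrO_{E_i})$ is available; the paper nowhere establishes a vanishing of $H^2(Y;T^0_Y)$ or $\mathbb{T}^2_Y$, and even the unobstructedness statement in dimension $3$ (Theorem~\ref{dim3unob}) is proved by Namikawa's $T^1$-lifting method, not by killing the obstruction space. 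Likewise in case (i) the $\Cee^*$-action exists only on the germ, so $H^2(Y;T^0_Y)$ carries no weight grading and the differential $\partial$ cannot be made weight-homogeneous; the weight argument is legitimately applied only to the local groups $H^0(X;T^1_X)$, $K$, $K'$ (Proposition~\ref{wtdcase}).

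Second, you are trying to lift the unit $\sigma$ itself, which is stronger than what is needed and than what is actually proved. The paper's proof (Theorem~\ref{smoothable}) works on the resolution: Schlessinger's isomorphism identifies $\mathbb{T}^1_Y$ with $H^1(V;\Omega^{n-1}_{\hY}|V)$, and the local cohomology sequence maps this onto the kernel of $H^2_E(\hY;\Omega^{n-1}_{\hY})\to H^2(\hY;\Omega^{n-1}_{\hY})$. The global input is that $H^{n-1}(\hY;\scrO_{\hY})=0$ together with Hodge--de Rham degeneration forces $H^2(\hY;\Omega^{n-1}_{\hY})$ to inject into $H^{n+1}(\hY)$, whence $K'$ --- which dies in $H^{n+1}(\hY)$ by its very definition --- lies in the image of $\mathbb{T}^1_Y$ (Lemma~\ref{lemma2.3}). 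One then lifts only the image $1'$ of the generator of $H^0(X;T^1_X)$ in $K'$, and concludes because Proposition~\ref{prop1.12} shows $\Ker\{H^0(X;T^1_X)\to K'\}\subseteq\mathfrak{m}_xH^0(X;T^1_X)$: any $\theta\in\mathbb{T}^1_Y$ mapping to $1'$ has local image congruent to the generator modulo $\mathfrak{m}_x$, hence is a first order smoothing even though the generator itself may not lift. To salvage your approach you would need to prove the strictly stronger statement that $d_2(\sigma)=0$ in $H^2(Y;T^0_Y)$, and the stated hypotheses give you no handle on that group.
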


In higher dimensions, it is not realistic to expect that $\mathbb{T}^2_Y =0$. However, in dimension $3$, Namikawa proved an unobstructedness result for Calabi-Yau threefolds with isolated lci rational singularities.  We are able to establish  an analogue of Namikawa's result   in the  isolated lci $0$-liminal  case, again in dimension $3$,  under some mild extra hypotheses (Theorem~\ref{dim3unob}):

\begin{theorem}\label{thm2} Let $Y$ be a strict $0$-liminal Calabi-Yau threefold. Assume that $\Aut Y$ is discrete. Then the deformations of $Y$ are unobstructed.
\end{theorem}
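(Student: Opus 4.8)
The plan is to follow Namikawa's strategy and verify the $T^1$-lifting criterion of Ran and Kawamata, which reduces the unobstructedness of the deformation functor of $Y$ to a surjectivity statement at each infinitesimal order. Writing $A_n = \Cee[t]/(t^{n+1})$, the criterion asserts that the deformations of $Y$ are unobstructed provided that, for every $n \ge 1$ and every deformation $\mathcal{Y}_n \to \Spec A_n$ of $Y$, the restriction map $\mathbb{T}^1_{\mathcal{Y}_n/A_n} \to \mathbb{T}^1_{\mathcal{Y}_{n-1}/A_{n-1}}$ on relative first-order deformations is surjective. As preparation I would record the local-to-global spectral sequence $E_2^{p,q} = H^p(Y, T^q_Y) \Rightarrow \mathbb{T}^{p+q}_Y$. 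Since the singularity is isolated, $T^q_Y$ is a skyscraper sheaf for $q \ge 1$, so that $H^p(Y, T^q_Y) = 0$ for $p \ge 1$ and $q \ge 1$; and since $\Aut Y$ is discrete, $H^0(Y, T^0_Y) = 0$. Because $Y$ is lci, its local deformations are unobstructed, so the skyscraper contribution $H^0(Y, T^2_Y)$ carries no genuine obstruction, and the only potential obstructions to deforming $Y$ are global, lying in the term $H^2(Y, T^0_Y)$. The $T^1$-lifting argument is designed precisely to control this term Hodge-theoretically.

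The heart of the proof is a Hodge-theoretic description of the relevant cohomology. Using the isomorphism $\omega_Y \cong \scrO_Y$ of the strict $0$-liminal hypothesis (Definition~\ref{definestrict}), contraction with a generator of $H^0(Y, \omega_Y)$ identifies $T^0_Y$ with $\Omega^2_Y$ on the smooth locus; passing to the Du Bois incarnation, I would upgrade this to an identification of the pertinent cohomology of $T^0_Y$ with graded pieces of the Du Bois cohomology of $Y$, of the form $H^q(Y, T^0_Y) \cong \mathbb{H}^q(Y, \uOp\,^2_Y)$. The point of replacing $\Omega^p_Y$ by the Du Bois complex $\uOp_Y$ is that, although $Y$ is only $0$-Du Bois and \emph{not} $0$-rational, so that $\uOp_Y \ne \Omega^p_Y$, the Du Bois complex still computes the graded pieces $\operatorname{gr}^p_F H^{\ast}(Y)$ of a genuine (mixed) Hodge structure, and the associated Du Bois--de Rham spectral sequence degenerates at $E_1$. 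Under this dictionary the relative first-order deformations $\mathbb{T}^1_{\mathcal{Y}_n/A_n}$ become a graded piece of the relative Du Bois cohomology $\mathbb{H}^{\ast}(\mathcal{Y}_n, \uOb_{\mathcal{Y}_n/A_n})$, and the $T^1$-lifting question turns into the assertion that the corresponding relative Hodge bundle is locally free over $A_n$, equivalently that its formation commutes with base change.

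The main obstacle will be to establish this local freeness in the non-rational setting. The strategy, following Deligne's proof of $T^1$-lifting in the smooth case, is first to argue that the relative Du Bois--de Rham cohomology $\mathbb{H}^{\ast}(\mathcal{Y}_n, \uOb_{\mathcal{Y}_n/A_n})$ is a free $A_n$-module: since $\Spec A_n$ is a nilpotent thickening of a point, base change to the closed point yields $H^{\ast}(Y, \Cee)$, and a rank comparison using the integrable Gauss--Manin connection forces freeness. From the $E_1$-degeneration of the relative spectral sequence one then deduces that each Hodge graded piece is itself locally free of the expected rank, giving the desired surjectivity. The delicate point, absent in the canonical/rational case treated by Namikawa, is exactly that $\uOp_Y$ genuinely differs from $\Omega^p_Y$: one must verify that the relative Du Bois complex behaves well under the lci deformation, that the relative spectral sequence degenerates (which I would reduce to the absolute degeneration together with flatness and the rank count above), and that no jumping of Du Bois--Hodge numbers occurs. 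Here the threefold hypothesis keeps the bookkeeping of graded pieces manageable and, via Serre duality with $\omega_Y \cong \scrO_Y$, ensures that the extreme terms of the spectral sequence — where the vanishing $H^0(Y, T^0_Y) = 0$ coming from discreteness of $\Aut Y$ enters — impose no base-change obstruction. Once the relevant Hodge bundle is shown to be locally free over $A_n$, the map $\mathbb{T}^1_{\mathcal{Y}_n/A_n} \to \mathbb{T}^1_{\mathcal{Y}_{n-1}/A_{n-1}}$ is surjective, the $T^1$-lifting criterion applies, and the deformations of $Y$ are unobstructed.
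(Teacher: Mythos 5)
Your overall frame---reduce to the $T^1$-lifting criterion and prove surjectivity of $\mathbb{T}^1_{\mathcal{Y}_n/A_n}\to \mathbb{T}^1_{\mathcal{Y}_{n-1}/A_{n-1}}$ by a Hodge-theoretic local-freeness argument---is indeed the skeleton of Namikawa's proof, which is what the paper invokes. But the central step of your sketch has a genuine gap, and it is exactly the point where the $0$-liminal hypothesis bites. You propose to identify $\mathbb{T}^1_{\mathcal{Y}_n/A_n}$ (and $H^q(Y;T^0_Y)$) with graded pieces of (relative) Du Bois cohomology via contraction with a trivializing section of $\omega_Y$. That identification is available precisely when the singularities are $1$-Du Bois, and it is how unobstructedness is proved in that setting (\cite[Corollary 1.5]{FL22c}, cited in the introduction). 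Here it fails: by Theorem~\ref{maintheorem}(vii) an isolated lci $0$-liminal singularity always has $b^{1,n-2}\neq 0$, so by Proposition~\ref{charmDB} it is never $1$-Du Bois, and $\Omega^{n-1}_Y$ (or its reflexive hull, which is what $T^0_Y$ actually computes) genuinely differs from $\underline\Omega^{n-1}_Y$ in a way that is not merely a bookkeeping issue. Your sentence ``one must verify that \dots no jumping of Du Bois--Hodge numbers occurs'' is where the argument would break down, not a routine check; this is also why, as the paper notes, the analogous statement in dimension $\ge 4$ is open even for rational non-$1$-Du Bois singularities. (Separately, the relative Du Bois complex of a flat non-smooth family over an Artin base, and the Gauss--Manin/base-change argument for it, are not standard objects you can simply quote.)

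What actually makes the three-dimensional case work is a different mechanism, and it is the one piece of genuinely new input the paper supplies. Namikawa's proof of unobstructedness for Calabi--Yau threefolds with isolated lci singularities requires three conditions: $H^1(Y;\scrO_Y)=H^2(Y;\scrO_Y)=0$; discreteness of $\Aut Y$; and the injectivity of
$$H^0(Y; R^1\pi_*\scrO_{\hY}^*)\otimes_{\Zee}\Cee \xrightarrow{\ \delta\otimes 1\ } H^0(Y; R^1\pi_*\Omega^1_{\hY}),$$
i.e.\ an injective Chern class map $\Pic\hX\otimes_{\Zee}\Cee\to H^1(\hX;\Omega^1_{\hX})$ for the resolved singularity. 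The first two are immediate from Definition~\ref{definestrict} and the hypothesis; the third is the content of Lemma~\ref{namlemma}, whose proof in the $0$-liminal setting requires the Mayer--Vietoris analysis of $\Pic E$ for the normal crossing exceptional divisor (finiteness of $H^1(|\Gamma|;\Cee^*)$, injectivity of $\bigoplus_i\Pic^0E_i\to\bigoplus_{\#(I)=2}\Pic E_I$ up to finite kernel, etc.). This local input replaces the Hodge-theoretic degeneration you are trying to run on $Y$ itself, and it is entirely absent from your sketch. To repair your proposal you would either need to prove Lemma~\ref{namlemma} and then simply cite Namikawa, or find a substitute for the identification of $\mathbb{T}^1$ with a Hodge graded piece that survives the failure of the $1$-Du Bois condition.
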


In particular, in dimension $3$, combining Theorems \ref{thm2} and \ref{thm1}, we obtain the following generalization of the deformation theory of cusp and simple elliptic surface singularities: 

\begin{corollary}
Let $Y$ be a strict $0$-liminal Calabi-Yau threefold whose  unique singular point is a hypersurface singularity, and which is either weighted homogeneous  or for which $H^2(E_i; \scrO_{E_i}) = 0$ for all $i$. Assume additionally that $\Aut Y$ is discrete. Then $Y$ is smoothable. 
\end{corollary}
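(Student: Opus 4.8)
The plan is to combine Theorem~\ref{thm1} and Theorem~\ref{thm2} with the local smoothability of a hypersurface singularity. By Theorem~\ref{thm1}, the stated hypotheses on the singular point guarantee that $Y$ has a first order smoothing in the sense of Definition~\ref{defstsm}, i.e.\ a class $\xi \in \mathbb{T}^1_Y = \Ext^1(\Omega^1_Y,\scrO_Y)$ whose image under the local-to-global map $\rho\colon \mathbb{T}^1_Y \to H^0(Y; T^1_Y)$ is a smoothing direction for the germ at the singular point $p$. By Theorem~\ref{thm2}, the discreteness of $\Aut Y$ forces the deformations of $Y$ to be unobstructed, so the semiuniversal deformation space $\mathrm{Def}(Y)$ is smooth with Zariski tangent space $\mathbb{T}^1_Y$. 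The remaining task is to integrate $\xi$ to an honest global smoothing, and this is precisely where the hypersurface (hence lci) hypothesis enters a second time.

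First I would record the local picture. Since the unique singular point $p$ is an isolated hypersurface singularity, it is lci and locally smoothable, and its miniversal deformation has \emph{smooth} base $B_{\mathrm{loc}} = (H^0(Y; T^1_Y),0)$; the discriminant $\Delta \subset B_{\mathrm{loc}}$, i.e.\ the locus over which the fiber is still singular near $p$, is a proper closed analytic subset, and the fibers over $B_{\mathrm{loc}}\setminus \Delta$ are smooth near $p$. Restricting a deformation of $Y$ to a contractible Stein neighborhood of $p$ defines a morphism of deformation functors and hence a holomorphic map $r\colon \mathrm{Def}(Y) \to B_{\mathrm{loc}}$ whose differential at the origin is exactly the local-to-global map $\rho$.

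Now choose a smooth arc $\gamma\colon (D,0)\to \mathrm{Def}(Y)$ from a small disk $D$ with $\gamma'(0)=\xi$; such arcs exist in abundance because $\mathrm{Def}(Y)$ is smooth. The composite $r\circ\gamma$ is an arc in $B_{\mathrm{loc}}$ with tangent vector $\rho(\xi)$. By the definition of a first order smoothing, $\rho(\xi)$ is a smoothing direction, so the line $\Cee\cdot\rho(\xi)$ is not contained in $\Delta$; since $\Delta$ is a proper closed subvariety of the smooth space $B_{\mathrm{loc}}$, after possibly perturbing $\gamma$ within $\mathrm{Def}(Y)$ while preserving its tangent, the image of $r\circ\gamma$ meets $\Delta$ only at the origin. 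Consequently the pulled-back family $\mathcal{Y}\to D$ has fibers that are smooth in a neighborhood of $p$ for all small $t\neq 0$. Away from $p$ the space $Y$ is smooth, so after shrinking $D$ the family is a locally trivial deformation there; hence $\mathcal{Y}_t$ is globally smooth for $0<|t|\ll 1$, and $Y$ is smoothable.

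The only genuine content beyond quoting the two theorems is this integration step, and that is where I expect the main obstacle to lie: one must ensure that the first order smoothing direction $\rho(\xi)$ is not absorbed into the discriminant of the local deformation space. This is exactly what is delivered by the smoothness of $B_{\mathrm{loc}}$ (guaranteed by the hypersurface hypothesis, so that the germ is locally smoothable with smooth miniversal base) together with the openness of the smooth locus in a flat family. Note that the discreteness of $\Aut Y$ is used only through Theorem~\ref{thm2}, to secure unobstructedness of $\mathrm{Def}(Y)$, whereas the hypersurface hypothesis is used both in Theorem~\ref{thm1}, to produce the class $\xi$, and again here, to make the local base smooth so that $\xi$ can be integrated.
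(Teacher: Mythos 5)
Your proposal is correct and follows essentially the same route as the paper: Theorem~\ref{smoothable} (via Theorem~\ref{thm1}) supplies a first order smoothing $\theta\in\mathbb{T}^1_Y$, Theorem~\ref{dim3unob} (via Theorem~\ref{thm2}) makes $\mathbf{Def}_Y$ smooth so that $\theta$ integrates to an arc, and the conclusion follows from the standard fact already recorded in Definition~\ref{defstsm} that a one-parameter deformation whose Kodaira--Spencer class is a first order smoothing has smooth total space and hence smooth nearby fibers. Your discriminant-in-the-local-miniversal-base phrasing of the integration step is an equivalent (if slightly more roundabout) version of that same standard argument, so there is nothing substantively different to flag.
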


 In dimension greater than $3$, there is as yet no known unobstructedness result for  deformations of Calabi-Yau varieties with rational singularities which are not $1$-Du Bois, and the corresponding result in the $0$-liminal case  is similarly unknown and is likely to be as least as difficult. Nonetheless, the results of Kawamata-Namikawa \cite{KawamataNamikawa} suggest that one might expect such results to hold. One could also ask if, by analogy with the situation in dimension $2$, there are reasonably general hypotheses  under which the deformation functor of $Y$ is smooth over the local deformation functor of the singularity. However, we show in Example~\ref{finalexample} that this can fail in dimension at least $3$, even for very simple examples. Similar constructions work for generalized Fano and Calabi-Yau varieties with rational singularities in dimension at least $4$ as noted in \cite[Remark 4.12(v)]{FL}.

The contents of this paper are as follows. We start in \S\ref{section1} by a preliminary discussion of the higher dimensional analogues of the simple elliptic and cusp  singularities  (i.e. the $0$-liminal singularities in dimension $2$). This is inspired by the work of Ishii \cite{Ishii85} (see also \cite{Fujino2011}, \cite{Ishii2012}, and \cite{Yonemura}). In Section \ref{section2},  we adapt the arguments of \cite{FL} to the case of $0$-liminal singularities. The key technical result is Theorem \ref{maintheorem}, which gives a filtration on the local deformation space of $0$-liminal singularity in terms of Hodge theoretic (``Du Bois" type) data. A case where everything becomes more transparent and computable is the weighted homogeneous case discussed in Proposition \ref{wtdcase}. In Section \ref{section3}, after some general considerations, we then prove our two main results Theorem \ref{dim3unob} (unobstructedness in dimension $3$) and Theorem \ref{smoothable} (existence of first order smoothings). In Section~\ref{section4}, we make some remarks about the deformations of the pair $(\hY,E)$, where $\hY \to Y$ is a log resolution of singularities,  under the special assumption that $K_{\hY} = \scrO_{\hY}(-E)$. In addition, we compare these deformations to the  locally trivial deformations of a normal crossing model under very restrictive hypotheses. 
In Section~\ref{section5}, we discuss in detail what are perhaps the simplest examples of our  theory,  hypersurfaces $Y$ of degree $n+2$ in $\Pee^{n+1}$ with an ordinary $(n+1)$-fold point.  Note that, for $n=2$, the surfaces $Y$ are quartic surfaces with an ordinary triple point and as such are perhaps the simplest degenerations of a $K3$ surface to a normal surface with nonrational singularities that one might encounter. By contrast to the case $n=2$, we show that, for $n \ge 3$,   the deformation functor of $Y$ is never smooth over the local deformation functor of the singularity (Example~\ref{finalexample}) and compare the deformations of $Y$ with the deformations of the corresponding $d$-semistable models. While the methods of this section are elementary, they serve to illustrate the general theory and are a concrete manifestation of how the study of $0$-liminal  Calabi-Yau varieties of dimension $n$  is connected with Fano varieties of dimension $n$ and (smooth) Calabi-Yau varieties of dimension $n-1$ for every $n \ge 2$.  

This paper should be viewed as a sequel to  \cite{FL}, \cite{FL22b}, and \cite{FL23}. Together, these  papers hint at a very rich picture in the   study of deformations of singular Calabi-Yau varieties, where the Hodge theory of singularities plays a central role.  However, many open questions remain. For example, as noted above, it is important to establish more general unobstructedness results in dimensions greater than $3$. Despite the failure of the natural morphism  from the deformation functor of a (global) singular Calabi-Yau variety to the (local) deformations of the singular point to be smooth for dimension at least $3$, one could also ask if there is a natural subfunctor of the local deformation functor over which the global deformation functor  is smooth. Roughly speaking this is the approach of Kawamata-Namikawa \cite{KawamataNamikawa} in the normal crossings case, and such an approach might apply  more generally  for   isolated singularities without the  lci assumption. Finally, even for the case of canonical hypersurface singularities, while the methods of \cite{FL23} find necessary conditions for the existence of first order smoothings for $k$-liminal singularities, finding  sufficient conditions for first order smoothings to exist is completely mysterious for $k > 1$ as well as what happens when the singularities are $1$-rational but not $k$-liminal (for example, ordinary double points in even dimension at least $4$).
 
\subsection*{Acknowledgements}   We would  like to thank the referees for a careful reading of the paper and several helpful suggestions which helped to improve it.  We also thank  Rosie Shen, who raised a question at a recent  AIM workshop on higher Du Bois and higher rational singularities concerning the  local smoothability for cones over $K3$ surfaces which led to the discussion in \S\ref{section2.5}, and to AIM for hosting the workshop.

\section{The local case}\label{section1} 

In this section, we review some basic properties of isolated log canonical Gorenstein singularities, first discussed by Ishii \cite{Ishii85}. The two basic examples of such singularities are (i) the singularities which admit a good anticanonical resolution (Definition~\ref{def-goodanti}); these are natural generalizations of the simple elliptic and cusp singularities from dimension $2$, and (ii) weighted homogeneous singularities satisfying an appropriate numerical condition \eqref{eq-0lim};  such singularities should be viewed as essentially cones over  Calabi-Yau manifolds of dimension $n-1$.

\subsection{Basic definitions and results} Throughout this section, unless otherwise noted, $(X,x)$ denotes the germ of an isolated Gorenstein singularity of dimension $n \ge 3$. We will also use $X$ to denote a contractible Stein representative for the germ $(X,x)$. We assume throughout that  $\pi\colon \hX \to X$ is  a \textsl{good or log resolution}, i.e.\ that  the exceptional divisor $\pi^{-1}(x) = E=\bigcup_iE_i$ has simple normal crossings, and that in addition it is an \textsl{equivariant   resolution}, i.e.\ that $\pi_*T_{\hX} = T^0_X$.  (See for example \cite[1.11, 1.12]{FL} for this terminology and the argument for the existence of  good equivariant   resolutions.)  Let $U = X -\{x\}= \hX -E$.  Note that $E$ is a deformation retract of $\hX$.  We  begin by recalling some of the basic numerical invariants of $(X,x)$, and refer to \cite{SteenbrinkDB}, \cite[\S1]{FL},   \cite{FL22d} for more details.

\begin{definition}\label{defineinv}   Define the \textsl{Du Bois invariants} $b^{p,q} = \dim H^q(\hX; \Omega^p_{\hX}(\log E)(-E))$ (for $q > 0$) and  the \textsl{link invariants} $\ell^{p,q} = \dim H^q(\hX; \Omega^p_{\hX}(\log E)|E)$. Here, if $L$ denotes the link of the isolated singularity $X$, then the cohomology groups $H^k(L)$ are naturally mixed Hodge structures and $\ell^{p,q} = \dim \Gr^p_FH^{p+q}(L)$.  If $X$ is in addition a local complete intersection of dimension $n$, define
$$s_p = \dim \Gr^p_FH^n(M),$$ 
where $M$ is the Milnor fiber and $F$ is the Hodge filtration on $H^n(M)$. 
\end{definition}

The following result is well-known \cite[Lemma 2.3(iv)]{FL22d}. 

\begin{proposition}\label{linkexseq} There is a long  exact sequence of mixed Hodge structures, the link exact sequence:
$$\cdots \to H^k(E) \to H^k(L) \to H^{k+1}_E(\hX) \to  \cdots.$$
It is the long exact hypercohomology sequence associated to the exact sequence of complexes
$$0 \to  \Omega^\bullet_E/\tau_E^\bullet\to  \Omega^\bullet_{\hX}(\log E)|E \to  \Omega^\bullet_{\hX}(\log E)/\Omega^\bullet_{\hX} \to  0.$$
Here $\tau_E^\bullet$ is the subcomplex of differentials on $E$ supported on $E_{\text{\rm{sing}}}$, $\mathbb{H}^k(E; \Omega^\bullet_E/\tau_E^\bullet) = H^k(E)$, $\mathbb{H}^k(E;\Omega^\bullet_{\hX}(\log E)|E) = H^k(L)$ and $\mathbb{H}^k(E;\Omega^\bullet_{\hX}(\log E)/\Omega^\bullet_{\hX}) = H^{k+1}_E(\hX)$. Moreover, for each of the hypercohomology groups above, the first hypercohomology spectral sequence degenerates at $E_1$ and the corresponding filtration on cohomology is the Hodge filtration. \qed
\end{proposition}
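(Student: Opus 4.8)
The plan is to deduce every assertion from the displayed short exact sequence of complexes by taking hypercohomology, so I would begin by verifying that
$$0 \to \Omega^\bullet_E/\tau^\bullet_E \to \Omega^\bullet_{\hX}(\log E)|E \to \Omega^\bullet_{\hX}(\log E)/\Omega^\bullet_{\hX} \to 0$$
is exact. The only nonformal ingredient is a local computation on the SNC divisor $E$: in local coordinates with $E = \{z_1\cdots z_r = 0\}$ one checks that $\mathcal I_E\cdot\Omega^p_{\hX}(\log E) \subseteq \Omega^p_{\hX}$, so the polar quotient $\Omega^p_{\hX}(\log E)/\Omega^p_{\hX}$ is an $\scrO_E$-module and hence a quotient of the restriction $\Omega^p_{\hX}(\log E)|E = \Omega^p_{\hX}(\log E)/\mathcal I_E\Omega^p_{\hX}(\log E)$. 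The kernel of this surjection is $\Omega^p_{\hX}/\mathcal I_E\Omega^p_{\hX}(\log E)$, which the same local model identifies with the torsion-free quotient $\Omega^p_E/\tau^p_E$, giving the sequence.

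Next I would identify the three hypercohomology groups. The quotient complex $\Omega^\bullet_{\hX}(\log E)/\Omega^\bullet_{\hX}$ is supported on $E$, so its hypercohomology over $E$ agrees with that over $\hX$; comparing the long exact sequence of $0 \to \Omega^\bullet_{\hX} \to \Omega^\bullet_{\hX}(\log E) \to \Omega^\bullet_{\hX}(\log E)/\Omega^\bullet_{\hX} \to 0$ with the local cohomology sequence $\cdots \to H^k(\hX) \to H^k(U) \to H^{k+1}_E(\hX) \to \cdots$, and using $H^k(\hX) = \mathbb{H}^k(\Omega^\bullet_{\hX})$ together with Deligne's $H^k(U) = \mathbb{H}^k(\Omega^\bullet_{\hX}(\log E))$, yields $\mathbb{H}^k(E;\Omega^\bullet_{\hX}(\log E)/\Omega^\bullet_{\hX}) = H^{k+1}_E(\hX)$. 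For the middle term I would invoke the comparison theorem (Steenbrink, Durfee--Hain) that restriction along $i\colon E \hookrightarrow \hX$ is a filtered quasi-isomorphism $\Omega^\bullet_{\hX}(\log E) \to i_*(\Omega^\bullet_{\hX}(\log E)|E)$, so that, since $U$ retracts onto the link $L$, $\mathbb{H}^k(E;\Omega^\bullet_{\hX}(\log E)|E) = \mathbb{H}^k(\hX;\Omega^\bullet_{\hX}(\log E)) = H^k(U) = H^k(L)$. Finally, $\Omega^\bullet_E/\tau^\bullet_E$ is the standard concrete model for the Du Bois complex $\uOb_E$ of the proper SNC variety $E$, so $\mathbb{H}^k(E;\Omega^\bullet_E/\tau^\bullet_E) = H^k(E;\Cee)$.

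With these identifications, the long exact hypercohomology sequence of the displayed short exact sequence becomes precisely $\cdots \to H^k(E) \to H^k(L) \to H^{k+1}_E(\hX) \to \cdots$, the first map being the natural restriction $H^k(E) \cong H^k(\hX) \to H^k(U) \cong H^k(L)$. To upgrade this to a sequence of mixed Hodge structures I would equip each complex with the stupid (Hodge) filtration $F$ and the weight filtration coming from the log/SNC structure, check that the two maps in the short exact sequence respect both filtrations, and verify that the first hypercohomology spectral sequence degenerates at $E_1$ with $F$ inducing the Hodge filtration: for $\Omega^\bullet_E/\tau^\bullet_E$ this is Deligne's and Du Bois' theory for proper SNC varieties; for $\Omega^\bullet_{\hX}(\log E)$, and hence for its restriction to $E$, it is Deligne's $E_1$-degeneration for the open variety $U$; and for the local cohomology term it follows by duality or directly from the degeneration of $\Omega^\bullet_{\hX}$ and $\Omega^\bullet_{\hX}(\log E)$. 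Strictness of the maps with respect to $F$ then makes the long exact sequence one of mixed Hodge structures.

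The main obstacle is the middle identification together with its Hodge-theoretic refinement: showing that restriction to $E$ computes $H^k(L)$ on the nose, with the naive filtration inducing the correct Hodge filtration and $E_1$-degeneration. This is exactly where one genuinely needs the comparison theorem asserting that the restriction of the log de Rham complex to $E$ is a \emph{filtered} quasi-isomorphism; it rests on the properness of $\pi$, the deformation retraction of $\hX$ onto $E$, and strictness of the Hodge filtration. Everything else is either the local computation on the SNC divisor or a formal consequence of taking hypercohomology and matching long exact sequences.
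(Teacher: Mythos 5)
Your proposal is correct in substance, but note that the paper itself gives no proof of this proposition: it is stated with a reference to \cite[Lemma 2.3(iv)]{FL22d} and closed with \qed, so what you have written is essentially the standard argument behind that citation rather than a variant of anything in this paper. Your assembly of the ingredients is the right one: the local verification that $\mathcal{I}_E\cdot\Omega^p_{\hX}(\log E)\subseteq\Omega^p_{\hX}$ and that the kernel of $\Omega^p_{\hX}(\log E)|E\to\Omega^p_{\hX}(\log E)/\Omega^p_{\hX}$ is the torsion-free quotient $\Omega^p_E/\tau^p_E$; the identification of the quotient complex with local cohomology (for which the cleaner route, used implicitly by the paper in the proof of Theorem~\ref{maintheorem}(v), is that the quotient is supported on $E$ while $\mathbb{H}^k_E(\hX;\Omega^\bullet_{\hX}(\log E))=0$, so $\mathbb{H}^k(E;\Omega^\bullet_{\hX}(\log E)/\Omega^\bullet_{\hX})=\mathbb{H}^k_E(\hX;\Omega^\bullet_{\hX}(\log E)/\Omega^\bullet_{\hX})\cong\mathbb{H}^{k+1}_E(\hX;\Omega^\bullet_{\hX})$, rather than matching two long exact sequences term by term); and the Steenbrink/Durfee--Hain comparison for the middle term. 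One phrasing you should repair: the restriction map $\Omega^\bullet_{\hX}(\log E)\to i_*(\Omega^\bullet_{\hX}(\log E)|E)$ is \emph{not} a quasi-isomorphism of complexes of sheaves on $\hX$ (its kernel $\Omega^\bullet_{\hX}(\log E)(-E)$ restricts to $\Omega^\bullet_U$ on $U$ and is certainly not acyclic there); the correct statement, which is what Steenbrink proves and what you actually need, is that for a contractible Stein representative the induced map on hypercohomology $\mathbb{H}^k(\hX;\Omega^\bullet_{\hX}(\log E))\to\mathbb{H}^k(E;\Omega^\bullet_{\hX}(\log E)|E)$ is a filtered isomorphism, which combined with $\mathbb{H}^k(\hX;\Omega^\bullet_{\hX}(\log E))\cong H^k(U)\cong H^k(L)$ gives the identification. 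With that correction your outline is a complete and accurate account of the standard proof.
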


The following lists  some of the basic vanishing results concerning these invariants:

\begin{proposition}\label{locinvvan}  With notation as in Definition~\ref{defineinv},
\begin{enumerate} 
\item[\rm(i)] For $p+q > n$, $b^{p,q} = \dim H^q(\hX; \Omega^p_{\hX}(\log E)(-E))=0$.
\item[\rm(ii)] If $X$ is a local complete intersection, then $b^{p,q} = 0$ for all $q> 0$ unless $p+q= n-1$ or $p+q=n$. 
\item[\rm(iii)] {\rm(Semipurity)} The morphism  $H^k(E) \to H^k(L)$   is surjective for 
   $k\le n$ and the morphism $H^{k-1}(L) \to  H^k_E(\hX)$ is injective for $k> n$. Thus the map $H^n_E(\hX) \to H^n(E)$ is an isomorphism. 
\item[\rm(iv)]  If $X$ is a local complete intersection, then $\ell^{p,q} = 0$   except in the following cases: $p=q=0$;  $p=n$ and $q=n-1$;    $p+q= n-1$;  $p+q=n$. 
\end{enumerate}
\end{proposition}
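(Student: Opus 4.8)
The plan is to deduce all four parts from three ingredients: relative Grothendieck--Serre duality over the Gorenstein germ $X$, the degeneration statements of Proposition~\ref{linkexseq}, and (for the lci parts) the concentration of the cohomology of the Milnor fibre. Throughout I use that $X$ is Stein, so for $q>0$ the invariants are the stalks at $x$ of the coherent sheaves $R^q\pi_*\Omega^p_{\hX}(\log E)(-E)$ and $R^q\pi_*(\Omega^p_{\hX}(\log E)|E)$, hence finite dimensional, and that they are tied together by the twisting sequence
\[
0\to\Omega^p_{\hX}(\log E)(-E)\to\Omega^p_{\hX}(\log E)\to\Omega^p_{\hX}(\log E)|E\to0.
\]
The wedge pairing gives $\Omega^p_{\hX}(\log E)(-E)\cong\mathcal{H}om(\Omega^{n-p}_{\hX}(\log E),\omega_{\hX})$ (using $\Omega^n_{\hX}(\log E)(-E)=\omega_{\hX}$), which is what lets me move between complementary degrees. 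For \textbf{(i)} I would invoke the generalized Grauert--Riemenschneider (Steenbrink) vanishing $R^q\pi_*\Omega^p_{\hX}(\log E)(-E)=0$ for $p+q>n$; the case $p=n$ is ordinary Grauert--Riemenschneider. For a self-contained derivation I would feed the displayed duality into relative Grothendieck duality over $X$ ($\omega_X\cong\scrO_X$), writing $R\pi_*\Omega^p_{\hX}(\log E)(-E)\cong R\mathcal{H}om_X(R\pi_*\Omega^{n-p}_{\hX}(\log E),\scrO_X)$ and reading off cohomology sheaves via the $\mathcal{E}xt$ spectral sequence: the $R^i\pi_*\Omega^{n-p}_{\hX}(\log E)$ for $i\ge1$ are finite length and so contribute to $\mathcal{E}xt^\bullet_X(-,\scrO_X)$ only in top degree $n$, while $\pi_*\Omega^{n-p}_{\hX}(\log E)$ is reflexive; tracking degrees yields the claim. (One must retain the nonsplit extensions in $R\pi_*$, as the simple elliptic case $\pi_*\omega_{\hX}=\mathfrak m_x\spcheck{}^{\,\vee}$ already shows.)

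For \textbf{(iii)} I would argue by weights and strictness. Since $E$ is proper, $H^k(E)$ has weights $\le k$; since $\hX$ is smooth of dimension $n$ with compact deformation retract $E$, Poincar\'e--Lefschetz duality gives $H^k_E(\hX)\cong H^{2n-k}(E)\spcheck(-n)$, which has weights $\ge k$; and Steenbrink's semipurity for the link gives that $H^k(L)$ is pure of weight $k$ for $k\le n-1$ and has weights $\ge k+1$ for $k\ge n$. Inserting these into the link exact sequence of Proposition~\ref{linkexseq} and using that a morphism of mixed Hodge structures from weights $\le a$ to weights $\ge a+1$ vanishes, the relevant connecting maps die in the stated ranges. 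This produces the surjectivity of $H^k(E)\to H^k(L)$ and the injectivity of $H^{k-1}(L)\to H^k_E(\hX)$, and the two statements meet in degree $n$ to give the isomorphism $H^n_E(\hX)\xrightarrow{\sim}H^n(E)$.

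For \textbf{(ii)} and \textbf{(iv)} the extra input is that an isolated lci singularity of dimension $n$ has Milnor fibre $M$ that is $(n-1)$-connected (Milnor for hypersurfaces, Hamm in general), so $\widetilde H^k(M)=0$ for $k\ne n$. Combining the long exact sequence of the pair $(M,\partial M=L)$ with Lefschetz duality $H^k(M,L)\cong H^{2n-k}(M)\spcheck$ shows $H^k(L)=0$ for $k\notin\{0,n-1,n,2n-1\}$. Part (iv) is then immediate from $\ell^{p,q}=\dim\Gr^p_FH^{p+q}(L)$: the group vanishes unless $p+q\in\{0,n-1,n,2n-1\}$, and the two extreme degrees contribute only $(p,q)=(0,0)$ (constants) and $(p,q)=(n,n-1)$ (the orientation class $H^{2n-1}(L)\cong\Cee(-n)$, of Hodge--Tate type $(n,n)$). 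For (ii) I would transport this concentration to the $b^{p,q}$ by combining the twisting sequence with the duality of the first paragraph and part (i), so that $b^{p,q}$ is controlled by $\ell$- and $H^q(\hX;\Omega^p_{\hX}(\log E))$-data in complementary degree $\approx 2n-1-(p+q)$, forcing $b^{p,q}=0$ outside $p+q\in\{n-1,n\}$.

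I expect this last step to be the main obstacle. The coherent invariant $b^{p,q}$ is \emph{not} a Hodge number of a compact object: the complex $\Omega^\bullet_{\hX}(\log E)(-E)$ has vanishing total hypercohomology $H^\bullet(\hX,E)=0$, and $b^{p,q}$ measures the nondegeneration of the (non-proper) Fr\"olicher spectral sequence, equivalently the failure of the restriction $H^q(\hX;\Omega^p_{\hX}(\log E))\to\ell^{p,q}$ to be an isomorphism. Pinning this down requires controlling $H^q(\hX;\Omega^p_{\hX}(\log E))$, and the duality comparison carries a correction term (the adjoint-ideal/nonsplit-extension phenomenon visible already in the Grauert--Riemenschneider computation of the first paragraph); checking that this correction vanishes in the range $p+q\le n-2$ is exactly where the lci hypothesis — i.e. the concentration of $H^\bullet(M)$ in degree $n$ — is essential, and is the technical heart of the argument.
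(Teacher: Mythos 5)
Your treatment of (i), (iii) and (iv) is essentially the paper's. The paper proves (i) by citing the Guill\'en--Navarro Aznar--Pascual-Gainza--Puerta--Steenbrink vanishing theorem, which is exactly the ``generalized Grauert--Riemenschneider'' statement you invoke (your alternative derivation via relative duality is only a sketch, but it is not needed); it proves (iii) as a direct consequence of Goresky--MacPherson semipurity via precisely the weight bookkeeping you carry out; and it deduces (iv) from the concentration $H^k(L)\ne 0$ only for $k\in\{0,n-1,n,2n-1\}$, which you correctly derive from Hamm's connectivity of the Milnor fibre. Two small corrections on (iii): semipurity gives only that $H^k(L)$ has weights $\le k$ for $k\le n-1$, not that it is pure of weight $k$ (your argument uses only the inequality, so nothing breaks); and the weight argument actually yields surjectivity of $H^k(E)\to H^k(L)$ for $k\le n-1$ together with the \emph{vanishing} of $H^n(E)\to H^n(L)$ --- which is what forces $H^n_E(\hX)\to H^n(E)$ to be an isomorphism, and is all that is ever used (surjectivity at $k=n$ as literally stated would force $H^n(L)=0$, which already fails for an ordinary double point in dimension $3$).

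The genuine gap is (ii), and you have located it yourself. The paper does not derive (ii) from (i), (iv) and duality; it cites \cite[Theorem 5]{SteenbrinkDB}. Your proposed reduction --- controlling $b^{p,q}$ by link data in complementary degree via the twisting sequence and the duality $H^q(\hX;\Omega^p_{\hX}(\log E)(-E))^\vee\cong H^{n-q}_E(\hX;\Omega^{n-p}_{\hX}(\log E))$ --- does not close formally: these sequences bound $b^{p,q}$ only up to the unknown groups $H^q(\hX;\Omega^p_{\hX}(\log E))$, which are not computed by $H^\bullet(L)$ or the topological concentration of $H^\bullet(M)$ alone. Steenbrink's proof genuinely uses the Hodge filtration on the vanishing cohomology of the Milnor fibre (the invariants $s_p$ of Definition~\ref{defineinv}) together with the vanishing of the total hypercohomology of $\Omega^\bullet_{\hX}(\log E)(-E)$; this is the ``technical heart'' you flag, and it is a theorem in its own right rather than a formal consequence of the other parts. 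Unless you intend to reproduce that argument, you should simply cite the theorem, as the paper does.
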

\begin{proof} (i) This is the vanishing theorem  of Guill\'en, Navarro Aznar, Pascual-Gainza, Puerta and Steenbrink (see e.g.\ \cite[p. 181]{PS}).

\smallskip
\noindent (ii) This is a result of Steenbrink \cite[Theorem 5]{SteenbrinkDB}.

\smallskip
\noindent (iii) This is a direct consequence of the Goresky-MacPherson semipurity theorem \cite[Corollary 1.12]{Steenbrink}. 

\smallskip
\noindent (iv) This is a consequence of the fact that, if $X$ is an isolated local complete intersection, then  $H^k(L) \neq 0$ only for $k = 0, n-1, n, 2n-1$.
\end{proof} 

Then there is the following basic definition due to Steenbrink \cite[\S3]{Steenbrink} for the case $m=0$ and to \cite{MOPW} and \cite{JKSY-duBois} in general: 

\begin{definition}\label{defkDB}  The germ of a  complex space $(X,x)$ is \textsl{Du Bois}  if the natural map $\scrO_X \to \underline\Omega^0_X$ is an isomorphism, where $\underline\Omega^0_X$ is the $0^{\text{th}}$ graded piece of the Deligne-Du Bois complex as defined in \cite{duBois}.  More generally, it is \textsl{$k$-Du Bois} if the natural map $\Omega^p_X \to \underline\Omega^p_X$ is an isomorphism for all $p\le k$, where $\underline\Omega^p_X$ is the $p^{\text{th}}$ graded piece of the Deligne-Du Bois complex.
\end{definition} 

We will also need  the following result   \cite[Theorem 5.2(iv)]{FL22d}:

\begin{proposition}\label{charmDB} An isolated local complete intersection singularity $X$ of dimension at least $3$ is $k$-Du Bois $\iff$ $X$  is $(k-1)$-Du Bois and $b^{k, n-k-1} =0$. \qed
\end{proposition}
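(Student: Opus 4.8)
The plan is to isolate the single graded piece $p=k$ and to translate the $k$-Du Bois condition into the vanishing of the invariants $b^{k,q}$. We may assume $1\le k\le n-1$, since for $k=0$ the statement is the definition of Du Bois and for $k\ge n$ it is vacuous. First I would record a description of the Du Bois complex. For $p\ge 1$ the blow-up square $\{x\}\leftarrow E\to\hX\to X$ associated to the isolated singularity gives a distinguished triangle $\underline\Omega^p_X\to R\pi_*\Omega^p_{\hX}\to R\pi_*\Omega^p_E\xrightarrow{+1}$ (using $\underline\Omega^p_{\{x\}}=0$ and that $\hX$, $E$ are smooth resp.\ SNC). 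Combining this with the standard sheaf sequence $0\to\Omega^p_{\hX}(\log E)(-E)\to\Omega^p_{\hX}\to\Omega^p_E\to 0$ identifies $\underline\Omega^p_X\cong R\pi_*\Omega^p_{\hX}(\log E)(-E)$ for $p\ge 1$. Since $X$ is Stein and the singularity isolated, $\mathcal H^q(\underline\Omega^p_X)$ is supported at $x$ for $q>0$ with $\dim H^0\big(X;\mathcal H^q(\underline\Omega^p_X)\big)=\dim H^q(\hX;\Omega^p_{\hX}(\log E)(-E))=b^{p,q}$. As $(k-1)$-Du Bois is precisely the assertion that $\Omega^p_X\to\underline\Omega^p_X$ is a quasi-isomorphism for $p\le k-1$, the statement reduces to: \emph{given $(k-1)$-Du Bois, the map $\Omega^k_X\to\underline\Omega^k_X$ is a quasi-isomorphism iff $b^{k,n-k-1}=0$.} The forward implication is immediate, since a quasi-isomorphism forces $b^{k,q}=0$ for all $q>0$, in particular $b^{k,n-k-1}=0$.

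The heart of the matter is the reverse implication, and the key point is that, under $(k-1)$-Du Bois, the ``top diagonal'' invariant $b^{k,n-k}$ vanishes automatically, so that $b^{k,n-k-1}=0$ together with the lci vanishing of Proposition~\ref{locinvvan}(ii) already yields $b^{k,q}=0$ for every $q>0$. For this I would use the logarithmic de Rham complex $\Omega^\bullet_{\hX}(\log E)(-E)$, which carries the exterior derivative. Its hypercohomology vanishes: the sheaf sequence above globalizes to $0\to\Omega^\bullet_{\hX}(\log E)(-E)\to\Omega^\bullet_{\hX}\to\Omega^\bullet_E\to 0$, and since $E$ is a deformation retract of $\hX$ the restriction $H^*(\hX)\to H^*(E)$ is an isomorphism, whence $\mathbb{H}^m(\hX;\Omega^\bullet_{\hX}(\log E)(-E))=H^m(\hX,E)=0$ for all $m$. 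I then consider the spectral sequence $E_1^{p,q}=H^q(\hX;\Omega^p_{\hX}(\log E)(-E))\Rightarrow 0$ of the stupid (Hodge) filtration, noting that $\hX$ is noncompact so this need \emph{not} degenerate at $E_1$. By the lci vanishing, for $q>0$ the nonzero $E_1^{p,q}$ lie only on the two anti-diagonals $p+q=n-1$ and $p+q=n$, and $(k-1)$-Du Bois kills all of these with $p\le k-1$.

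I would then observe that the entry $E_1^{k,n-k}=b^{k,n-k}$ is a permanent cycle that is never hit: any incoming differential $d_r$ originates from $E_r^{k-r,\,n-k+r-1}$, a bottom-diagonal term ($p+q=n-1$) with $p=k-r\le k-1$ and $q=n-k+r-1\ge 1$, hence zero by $(k-1)$-Du Bois; and any outgoing differential lands in total degree $n+1$, hence zero by Proposition~\ref{locinvvan}(i). Therefore $b^{k,n-k}=E_\infty^{k,n-k}=0$. This spectral-sequence bookkeeping is the conceptual core of the argument and the step I expect to be the main obstacle, since it is exactly what explains why only the single lower-diagonal invariant $b^{k,n-k-1}$ appears in the statement.

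Finally I would assemble the conclusion. With $b^{k,q}=0$ for all $q>0$, the sheaves $\mathcal H^q(\underline\Omega^k_X)=R^q\pi_*\Omega^k_{\hX}(\log E)(-E)$ vanish for $q>0$, so $\underline\Omega^k_X$ is the single sheaf $\mathcal H^0(\underline\Omega^k_X)=\pi_*\Omega^k_{\hX}(\log E)(-E)$ in degree $0$. It remains to see that the natural map $\Omega^k_X\to\mathcal H^0(\underline\Omega^k_X)$ is an isomorphism. Since both sheaves agree on the smooth locus $U=X\setminus\{x\}$, of codimension $n\ge 3$, and the target is reflexive (a pushforward of a locally free sheaf to the normal variety $X$), this amounts to the reflexivity of $\Omega^k_X$; using that $X$ is lci (hence Cohen--Macaulay) together with the $(k-1)$-Du Bois hypothesis, a local-cohomology comparison of $\Omega^k_X$ with $\pi_*\Omega^k_{\hX}(\log E)(-E)$ shows this $\mathcal H^0$-isomorphism is controlled by the same invariants $b^{k,n-k-1}$ and $b^{k,n-k}$ and hence holds under our hypotheses. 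This completes the reverse implication and the equivalence. I expect the $\mathcal H^0$-step to be the only delicate secondary point, subordinate to the spectral-sequence vanishing of $b^{k,n-k}$.
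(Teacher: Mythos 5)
The paper itself gives no proof of this proposition: it is quoted verbatim from \cite[Theorem 5.2(iv)]{FL22d}, so there is no internal argument to compare against, and your attempt has to be judged on its own terms. Your reduction of the reverse implication to the two claims (a) $b^{k,q}=0$ for all $q>0$ and (b) $\Omega^k_X\to\mathcal{H}^0(\underline\Omega{}^k_X)$ is an isomorphism, is correct, and your treatment of (a) is sound and is indeed the conceptual core: the identification $\underline\Omega{}^p_X\cong R\pi_*\Omega^p_{\hX}(\log E)(-E)$ for $p\ge 1$, the vanishing $\mathbb{H}^m(\hX;\Omega^\bullet_{\hX}(\log E)(-E))=H^m(\hX,E;\Cee)=0$, and the bookkeeping showing that, given Proposition~\ref{locinvvan}(i),(ii) and the $(k-1)$-Du Bois hypothesis, the entry $E_1^{k,n-k}$ is a permanent cycle that is never hit, forcing $b^{k,n-k}=0$. (A minor imprecision: the quotient $\Omega^p_{\hX}/\Omega^p_{\hX}(\log E)(-E)$ is $\underline\Omega{}^p_E$, not the sheaf of K\"ahler differentials $\Omega^p_E$; this does not affect the argument.)

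The genuine gap is step (b). You justify it by saying the target $\pi_*\Omega^k_{\hX}(\log E)(-E)$ is reflexive because it is the pushforward of a locally free sheaf to a normal variety, so that the question reduces to reflexivity of $\Omega^k_X$. That supporting claim is false: already for $k=0$ one has $\pi_*\scrO_{\hX}(-E)\subseteq\mathfrak{m}_x\subsetneq\scrO_X=\bigl(\pi_*\scrO_{\hX}(-E)\bigr)^{\vee\vee}$, so such pushforwards are torsion-free but typically not reflexive, and the problem does not reduce to reflexivity. What is actually required is both injectivity of $\Omega^k_X\to\pi_*\Omega^k_{\hX}(\log E)(-E)$ (i.e.\ vanishing of the torsion of $\Omega^k_X$ supported at $x$, a nontrivial fact for isolated complete intersections) and surjectivity (an extension statement for sections of $\Omega^k_{\hX}(\log E)(-E)$ across the exceptional divisor). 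Neither is established; the sentence asserting that ``a local-cohomology comparison shows this is controlled by the same invariants'' is precisely the argument that has to be supplied, and it is where the substance of \cite[Theorem 5.2]{FL22d} beyond the spectral-sequence vanishing lies. Until that step is filled in, the reverse implication is not proved.
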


We next recall some general results about isolated Cohen-Macaulay Du Bois singularities. 

\begin{lemma}\label{lemma1.1}  Let $(X,x)$ be a normal isolated singularity. 
\begin{enumerate} \item[\rm(i)] The singularity  $X$ is Cohen-Macaulay $\iff$  $H^i(\hX; \scrO_{\hX}) = 0$ for $0< i< n-1$. 
\item[\rm(ii)]   $X$ is Du Bois $\iff$ the natural map $H^i(\hX; \scrO_{\hX}) \to H^i(E; \scrO_E)$ is an isomorphism for all $i > 0$ $\iff$ $H^i(\hX; \scrO_{\hX}(-E))=0$ for $i> 0$, or equivalently $b^{0,i} = 0$ for all $i>0$. Hence, if $X$ is also Cohen-Macaulay, then   $H^i(E; \scrO_E) =0$ for $0< i< n-1$. 
\end{enumerate}
\end{lemma}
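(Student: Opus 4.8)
The plan is to treat the two chains of equivalences separately, beginning with the elementary reformulations in (ii) before turning to the harder homological inputs in (i).

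First, for part (ii), two of the stated equivalences are essentially formal. The identification $b^{0,i}=\dim H^i(\hX;\scrO_{\hX}(-E))$ is immediate from Definition~\ref{defineinv} since $\Omega^0_{\hX}(\log E)=\scrO_{\hX}$, so ``$H^i(\hX;\scrO_{\hX}(-E))=0$ for all $i>0$'' and ``$b^{0,i}=0$ for all $i>0$'' say the same thing. To pass between the vanishing of $H^i(\hX;\scrO_{\hX}(-E))$ and the assertion that $H^i(\hX;\scrO_{\hX})\to H^i(E;\scrO_E)$ is an isomorphism for $i>0$, I would take the long exact cohomology sequence of
$$0 \to \scrO_{\hX}(-E) \to \scrO_{\hX} \to \scrO_E \to 0,$$
noting that $E$ is connected, so that $H^0(\hX;\scrO_{\hX})\to H^0(E;\scrO_E)=\Cee$ is surjective; a short diagram chase then gives the equivalence in both directions. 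The final clause of (ii) is then immediate: combining the isomorphism $H^i(\hX;\scrO_{\hX})\cong H^i(E;\scrO_E)$ for $i>0$ (the Du Bois case) with the vanishing $H^i(\hX;\scrO_{\hX})=0$ for $0<i<n-1$ supplied by part (i) (the Cohen--Macaulay case) yields $H^i(E;\scrO_E)=0$ in that range.

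The substantive content of (ii) is that $X$ is Du Bois if and only if $H^i(\hX;\scrO_{\hX}(-E))=0$ for $i>0$. Here I would use descent for the Deligne--Du Bois complex along the abstract blowup square with vertices $E\subset\hX$ and $\{x\}\subset X$, which for $p=0$ produces a distinguished triangle
$$\underline\Omega^0_X \to \mathbb{R}\pi_*\scrO_{\hX}\oplus\Cee_x \to \mathbb{R}\pi_*\scrO_E \xrightarrow{+1},$$
using that $\{x\}$, $\hX$, and the SNC divisor $E$ are all Du Bois, so that their $0$-th Du Bois complexes are $\Cee_x$, $\scrO_{\hX}$, $\scrO_E$ respectively. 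Passing to stalk cohomology at $x$ (using the Stein hypothesis to identify $\mathcal H^i(\mathbb{R}\pi_*\scrO_{\hX})_x$ with $H^i(\hX;\scrO_{\hX})$, and similarly for $E$), the associated long exact sequence shows that $\mathcal H^i(\underline\Omega^0_X)=0$ for all $i>0$, i.e.\ that $X$ is Du Bois, exactly when the restriction maps $H^i(\hX;\scrO_{\hX})\to H^i(E;\scrO_E)$ are isomorphisms for $i>0$, which by the previous paragraph is the desired vanishing.

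For part (i), I would start from the fact that normality forces $\operatorname{depth}_x\scrO_X\ge 2$, so that $X$ is Cohen--Macaulay precisely when $H^i_x(X;\scrO_X)=0$ for $2\le i\le n-1$. Since $X$ is Stein, the local cohomology triangle identifies $H^i_x(X;\scrO_X)\cong H^{i-1}(U;\scrO_U)$ for $i\ge 2$, so Cohen--Macaulayness is equivalent to $H^j(U;\scrO_U)=0$ for $1\le j\le n-2$. It then remains to compare $U=\hX\setminus E$ with $\hX$ via the triangle $\mathbb{R}\Gamma_E(\hX;\scrO_{\hX})\to\mathbb{R}\Gamma(\hX;\scrO_{\hX})\to\mathbb{R}\Gamma(U;\scrO_U)$. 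The key point, and the main obstacle, is to show that $\mathbb{R}\Gamma_E(\hX;\scrO_{\hX})$ is concentrated in degree $n$. I expect to obtain this by rewriting $\mathbb{R}\Gamma_E(\hX;\scrO_{\hX})=\mathbb{R}\Gamma_x(X;\mathbb{R}\pi_*\scrO_{\hX})$ and applying Grothendieck--Serre duality together with Grauert--Riemenschneider vanishing $R^i\pi_*\omega_{\hX}=0$ for $i>0$: local duality at $x$ turns $\mathbb{R}\Gamma_x(X;\mathbb{R}\pi_*\scrO_{\hX})$ into the Matlis dual of $(\mathbb{R}\pi_*\omega_{\hX})_x[n]=(\pi_*\omega_{\hX})_x[n]$, which lives in a single degree. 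Granting this concentration, $H^j_E(\hX;\scrO_{\hX})=0$ for $j<n$, and the triangle gives $H^j(\hX;\scrO_{\hX})\cong H^j(U;\scrO_U)$ for $0<j<n-1$; comparing with the previous reduction completes the proof. The delicate step is exactly this duality and vanishing argument producing the degree-$n$ concentration, since it is what allows the ``one-sided'' coherent cohomology of the resolution to detect a condition that a priori involves the full punctured-neighborhood cohomology.
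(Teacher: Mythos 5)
Your argument is correct and follows essentially the same route as the paper: Grauert--Riemenschneider vanishing plus duality to kill $H^i_E(\hX;\scrO_{\hX})$ for $i<n$ and identify $H^i(\hX;\scrO_{\hX})$ with $H^i(U;\scrO_U)$ in part (i), and the ideal-sheaf sequence of $E\subset\hX$ for the formal equivalences in part (ii). The only difference is that where the paper simply cites Steenbrink's criterion ($X$ Du Bois $\iff$ $H^i(\hX;\scrO_{\hX})\to H^i(E;\scrO_E)$ is an isomorphism for $i>0$), you re-derive it from the abstract-blowup triangle for $\underline\Omega^0$, which is a valid (and standard) proof of that cited input.
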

\begin{proof} (i): This is well-known:  The main point is that, since $X$ has an isolated singularity, $X$ is  Cohen-Macaulay $\iff$    $X$ has depth $n$ $\iff$  $H^i_x(X; \scrO_X) =0$ for $i < n$ $\iff$  $H^i(U; \scrO_U) = 0$ for $0 < i < n-1$.  By the Grauert-Riemenschneider theorem, $H^i(\hX; K_{\hX}) = 0$ for $i > 0$ and hence by duality $H^i_E(\hX; \scrO_{\hX}) = 0$ for $i < n$. Thus, for $0< i< n-1$, $H^i(\hX; \scrO_{\hX}) \cong  H^i(U; \scrO_U)$, and so $X$ is  Cohen-Macaulay $\iff$ $H^i(\hX; \scrO_{\hX}) = 0$ for $0< i< n-1$. 

\smallskip
\noindent (ii): By \cite[(3.6)]{Steenbrink}, $X$ is  Du Bois $\iff$ the natural map $H^i(\hX; \scrO_{\hX}) \to H^i(E; \scrO_E)$ is an isomorphism for all $i > 0$. From the exact sequence
$$0 \to \scrO_{\hX}(-E) \to \scrO_{\hX} \to \scrO_E \to 0, $$
this is equivalent to the vanishing of $H^i(\hX; \scrO_{\hX}(-E))$ for $i> 0$. 
 The second statement in (ii) is then a consequence of (i). 
\end{proof}

For Du Bois singularities, there is the ``extra vanishing lemma" due to Steenbrink \cite[p.\ 1369]{SteenbrinkDB}:

\begin{lemma}[Extra vanishing lemma]\label{extravan}  If $X$ is  Du Bois, then $H^{n- 1}(\hX; \Omega^1_{\hX}(\log E)(-E)) = 0$. 
Dually, $H^1_E(\hX; \Omega^{n-1}_{\hX}(\log E)) =0$. 
\qed
\end{lemma}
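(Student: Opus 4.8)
The plan is to prove the first vanishing, $H^{n-1}(\hX;\Omega^1_{\hX}(\log E)(-E))=0$, and to deduce the second from it. The two statements are Serre (local) dual: since $E$ is simple normal crossing, the wedge pairing $\Omega^1_{\hX}(\log E)(-E)\otimes\Omega^{n-1}_{\hX}(\log E)\to\Omega^n_{\hX}(\log E)(-E)=\omega_{\hX}$ is perfect, so local duality along $E$ gives $H^{n-1}(\hX;\Omega^1_{\hX}(\log E)(-E))\cong H^1_E(\hX;\Omega^{n-1}_{\hX}(\log E))\spcheck$. Thus it suffices to show that $b^{1,n-1}:=\dim H^{n-1}(\hX;\Omega^1_{\hX}(\log E)(-E))=0$.

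First I would pass to the link. Write $U=\hX\setminus E$, which is homotopy equivalent to the link $L$. The Hodge--de Rham spectral sequence of $(\Omega^\bullet_{\hX}(\log E)(-E),F)$, which computes $H^\bullet_c(U)$ with its mixed Hodge structure, degenerates at $E_1$ (the twisted-down analogue of the degeneration statements in Proposition~\ref{linkexseq}, standard by Deligne's theory), so $b^{1,n-1}=\dim\Gr^1_F H^n_c(U)$. By Poincar\'e--Lefschetz duality $H^n_c(U)\cong H^n(U)\spcheck(-n)=H^n(L)\spcheck(-n)$ as mixed Hodge structures, and tracking the Hodge filtration through the dual and the Tate twist gives $\Gr^1_F H^n_c(U)\cong(\Gr^{n-1}_F H^n(L))\spcheck$. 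Hence $b^{1,n-1}=\dim\Gr^{n-1}_F H^n(L)=\ell^{n-1,1}$, and the problem becomes to show that this top link invariant vanishes.

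Next I would use semipurity to descend to $E$. By Proposition~\ref{locinvvan}(iii) the map $H^n(E)\to H^n(L)$ is surjective, and by $E_1$-degeneration it is strict for $F$, so $\Gr^{n-1}_F H^n(L)$ is a quotient of $\Gr^{n-1}_F H^n(E)$; it therefore suffices to prove $\Gr^{n-1}_F H^n(E)=0$. Since $\dim E=n-1$, this is the top Hodge piece of $H^n(E)$, and by the Hodge theory of the compact simple normal crossing variety $E$ it is computed by the $E_1$-page of the weight spectral sequence, whose relevant terms are $H^1(E^{(m)};\Omega^{n-1-m}_{E^{(m)}})$, where $E^{(m)}$ is the disjoint union of the $(m+1)$-fold intersections of the $E_i$. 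By Serre duality on the smooth projective $E^{(m)}$ these are dual to $H^{n-2-m}(E^{(m)};\scrO_{E^{(m)}})$, and the resulting complex is precisely the dual of the \v{C}ech/descent complex $\bigoplus_i\scrO_{E_i}\to\bigoplus_{i<j}\scrO_{E_{ij}}\to\cdots$ computing $H^{n-2}(E;\scrO_E)$.

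Finally, because $X$ is Gorenstein, hence Cohen--Macaulay, and Du Bois, Lemma~\ref{lemma1.1}(ii) gives $H^{n-2}(E;\scrO_E)=0$ for $n\ge 3$ (as $0<n-2<n-1$). Exactness of the above complex at the relevant spot then forces $\Gr^{n-1}_F H^n(E)=0$, whence $\ell^{n-1,1}=0$ and $b^{1,n-1}=0$, as desired. The main obstacle is the third step: one must set up the weight spectral sequence of the SNC variety $E$ carefully and match its top Hodge graded piece --- including the direction and identification of the $E_1$-differentials --- with the coherent complex computing $H^{n-2}(E;\scrO_E)$, so that the single vanishing $H^{n-2}(E;\scrO_E)=0$ indeed kills the entire graded piece. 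The duality bookkeeping in the second step (dual together with Tate twist) and the strictness invoked in the third also require care.
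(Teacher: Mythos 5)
Your step 1 (the local duality identifying the two vanishing statements) is fine, but the proof breaks at step 2 and the break is fatal, not cosmetic. The Hodge--de Rham spectral sequence of $\Omega^\bullet_{\hX}(\log E)(-E)$ does \emph{not} degenerate at $E_1$ here: $\hX$ is a non-compact resolution of a germ, the complex $\Omega^\bullet_{\hX}(\log E)(-E)$ is quasi-isomorphic to $j_!\Cee_U$, and its hypercohomology is $H^k(\hX,E;\Cee)=0$ for all $k$ because $E$ is a deformation retract of $\hX$. If the spectral sequence degenerated, every $b^{p,q}$ would vanish, contradicting e.g.\ $b^{1,n-2}=s_1\neq 0$ in Theorem~\ref{maintheorem}(i). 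Consequently your identification $b^{1,n-1}=\ell^{n-1,1}$ is unjustified and in fact false: for the cone over the Fermat quartic $K3$ surface ($n=3$, a $0$-liminal hypersurface singularity, hence Du Bois), the lemma gives $b^{1,2}=0$ while $\ell^{2,1}=\dim\Gr^2_FH^3(L)=19\neq 0$. Step 3 is also wrong at exactly the point you need it: semipurity gives surjectivity of $H^k(E)\to H^k(L)$ only for $k\le n-1$; at $k=n$ the correct statement is that $H^n(L)\to H^{n+1}_E(\hX)$ is \emph{injective}, equivalently $H^n(E)\to H^n(L)$ is the zero map (this is what forces $H^n_E(\hX)\to H^n(E)$ to be an isomorphism, as asserted in the same Proposition~\ref{locinvvan}(iii); the ``$k\le n$'' in its statement should be read as $k\le n-1$). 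In the $K3$-cone example $H^3(E)=0$ while $H^3(L)\neq 0$, so $\Gr^{n-1}_FH^n(L)$ is not a quotient of $\Gr^{n-1}_FH^n(E)$, and the statement $\ell^{n-1,1}=0$ you are driving toward is simply false for Du Bois singularities.

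The correct use of the complex $\Omega^\bullet_{\hX}(\log E)(-E)$ is the opposite of degeneration: its spectral sequence abuts to zero, and one exploits the resulting cancellations. By Proposition~\ref{locinvvan}(i) all $E_1^{p,q}$ with $p+q>n$ vanish, and by the Du Bois hypothesis (Lemma~\ref{lemma1.1}(ii)) the column $E_1^{0,q}=H^q(\hX;\scrO_{\hX}(-E))$ vanishes for $q>0$. Hence $E_1^{1,n-1}$ receives no differentials (the source $E_r^{1-r,n-2+r}$ has negative $p$ for $r\ge 1$ except for $d_1$ from $E_1^{0,n-1}=0$) and maps only into groups with $p+q=n+1$, which are zero; so $E_1^{1,n-1}=E_\infty^{1,n-1}$, which must vanish since the abutment in total degree $n$ is $H^n(\hX,E)=0$. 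This gives $b^{1,n-1}=0$ directly, with no detour through the link. This is Steenbrink's argument, which is what the paper cites (it gives no proof of its own beyond the reference to \cite{SteenbrinkDB}).
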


\subsection{Definition of $0$-liminal singularities} As the case of rational singularities is extensively discussed in \cite{FL},  we make the following definition as  in \cite{FL22d}:

\begin{definition} The isolated Gorenstein singularity $(X,x)$ is \textsl{$0$-liminal} (\textsl{purely elliptic} in the terminology of \cite{Ishii85},  or \textsl{strictly log canonical}) if it is Du Bois but not rational, or equivalently log canonical but not canonical. 
\end{definition}

The following is due to Ishii \cite{Ishii85} (Theorem 2.3 and its proof):

\begin{theorem}\label{Thm1.1}  Let $X$ be an isolated Gorenstein singularity. Then 
 $X$ is Du Bois $\iff$ $X$ is log canonical. 
 If $X$ is $0$-liminal, i.e.\ Du Bois but not rational, then $p_g(X)  = \dim H^{n-1}(\hX; \scrO_{\hX}) = \dim  H^{n-1}(E; \scrO_E) =1$.  \qed
\end{theorem}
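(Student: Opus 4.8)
The two displayed equalities are immediate from the standing hypotheses, so the only real content is the value $1$. Since $X$ is Gorenstein it is Cohen--Macaulay, so Lemma~\ref{lemma1.1}(i) gives $H^i(\hX;\scrO_{\hX})=0$ for $0<i<n-1$, and by definition $p_g(X)=\dim H^{n-1}(\hX;\scrO_{\hX})$. The Du Bois hypothesis together with Lemma~\ref{lemma1.1}(ii) supplies the isomorphism $H^{n-1}(\hX;\scrO_{\hX})\cong H^{n-1}(E;\scrO_E)$. The plan is then to compute $p_g$ directly from a trivializing form. Because $X$ is Gorenstein, I would fix a generator $\omega$ of $\omega_X\cong\scrO_X$, a nowhere-vanishing holomorphic $n$-form on $U$; writing $\operatorname{div}(\pi^*\omega)=K_{\hX}=\sum_i a_iE_i$ defines the discrepancies $a_i$, and the characterization ``log canonical $\iff a_i\ge -1$ for all $i$'' is what the argument exploits.

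Second, I would identify $p_g$ with an explicit colength. By Grauert--Riemenschneider $H^q(\hX;\omega_{\hX})=0$ for $q>0$ (the vanishing already used for Lemma~\ref{lemma1.1}(i)), and the same duality used there gives $H^1_E(\hX;\omega_{\hX})\cong H^{n-1}(\hX;\scrO_{\hX})^\vee$. Writing $j\colon U\hookrightarrow\hX$ for the inclusion, the local cohomology sequence of the pair $(\hX,U)$ for the torsion-free sheaf $\omega_{\hX}$ becomes $0\to H^0(\hX;\omega_{\hX})\to H^0(U;\omega_U)\to H^1_E(\hX;\omega_{\hX})\to 0$, so that $p_g=\dim\Coker\bigl(\pi_*\omega_{\hX}\to j_*\omega_U\bigr)$, a skyscraper at $x$. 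Now $H^0(U;\omega_U)=\scrO_X\cdot\omega$, while a form $f\omega$ extends holomorphically over $E$ exactly when $\operatorname{ord}_{E_i}(\pi^*f)\ge -a_i$ for every $i$; hence $p_g=\dim_{\Cee}\scrO_X/\mathfrak a$ with $\mathfrak a=\{f:\operatorname{ord}_{E_i}(\pi^*f)\ge -a_i\ \forall i\}$. In the log canonical case $-a_i\le 1$, so the only binding conditions sit on components with $a_i=-1$, where the requirement reads $\operatorname{ord}_{E_i}(\pi^*f)\ge 1$; since $\pi(E_i)=\{x\}$ this says exactly $f(x)=0$. Thus $\mathfrak a=\mathfrak m_x$ as soon as some $a_i=-1$ (i.e.\ $X$ is not canonical), giving $p_g=\dim\scrO_X/\mathfrak m_x=1$; whereas all $a_i\ge 0$ forces $\mathfrak a=\scrO_X$ and $p_g=0$, recovering the rational case.

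For the equivalence ``Du Bois $\iff$ log canonical'', the implication log canonical $\Rightarrow$ Du Bois is the Kollár--Kovács theorem, which I would simply cite. For the converse I would run the parallel computation with $\omega_{\hX}(E)$ in place of $\omega_{\hX}$: Du Bois forces $b^{0,n-1}=\dim H^{n-1}(\hX;\scrO_{\hX}(-E))=0$ by Lemma~\ref{lemma1.1}(ii), and the duality $H^{n-1}(\hX;\scrO_{\hX}(-E))^\vee\cong H^1_E(\hX;\omega_{\hX}(E))$ together with the analogous local cohomology sequence exhibits $\scrO_X/\mathfrak b$ as a subspace of this group, where $\mathfrak b=\{f:\operatorname{ord}_{E_i}(\pi^*f)\ge -(a_i+1)\ \forall i\}$. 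The vanishing therefore forces $\mathfrak b=\scrO_X$, i.e.\ $-(a_i+1)\le 0$, i.e.\ $a_i\ge -1$ for all $i$, which is precisely log canonicity. The main obstacle is the deep external input in the forward direction (lc $\Rightarrow$ Du Bois, taken from Kollár--Kovács); everything else reduces, via Grauert--Riemenschneider duality and the discrepancy divisor of the trivializing form, to the transparent colength computation above, the one delicate point being the identification of $H^1_E$ with the cokernel sheaf, which rests on the Grauert--Riemenschneider vanishing $H^1(\hX;\omega_{\hX})=0$.
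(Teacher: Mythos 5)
Your proof is correct; note, however, that the paper gives no argument of its own here --- the statement is quoted from Ishii with the citation \cite{Ishii85}, so the comparison is with that source rather than with an in-paper proof. Your route is the classical discrepancy/colength computation: identify $p_g=\dim H^{n-1}(\hX;\scrO_{\hX})$ with $\dim H^1_E(\hX;\omega_{\hX})=\dim\Coker\bigl(H^0(\hX;\omega_{\hX})\to H^0(U;\omega_U)\bigr)$ via Grauert--Riemenschneider and the formal duality that the paper itself uses repeatedly (e.g.\ in the proofs of Lemma~\ref{lemma1.1} and of Theorem~\ref{maintheorem}(v), where $H^1_E(\hX;\Omega^n_{\hX}(\log E))$ is paired against $H^{n-1}(\hX;\scrO_{\hX}(-E))$), then read the answer off the divisor of the trivializing $n$-form. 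All the individual steps check out: in the lc case the extension condition on $f\omega$ is vacuous on components with $a_i\ge 0$ and reduces to $f\in\mathfrak m_x$ on components with $a_i=-1$, so $p_g\in\{0,1\}$, and the conversion of ``not rational'' into ``some $a_i=-1$'' rests on the fact (implicit in your last sentence, worth making explicit) that for an isolated Cohen--Macaulay singularity rationality is equivalent to $H^{n-1}(\hX;\scrO_{\hX})=0$ by Lemma~\ref{lemma1.1}(i). Your twisted variant with $\omega_{\hX}(E)=\Omega^n_{\hX}(\log E)$ correctly extracts $a_i\ge-1$ from the single vanishing $b^{0,n-1}=0$, which is in fact weaker than the full Du Bois hypothesis. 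The genuine differences from the cited source are: (a) you import ``lc $\Rightarrow$ Du Bois'' from Koll\'ar--Kov\'acs, a later and far more general theorem, whereas Ishii handles the isolated Gorenstein case directly by a Hodge-theoretic analysis of the exceptional divisor; (b) Ishii's argument additionally locates the one-dimensional piece of $H^{n-1}(E;\scrO_E)$ on the essential divisor $E_{\text{\rm{ess}}}$, which your computation does not recover but which the theorem as stated does not require. What your approach buys is a short, self-contained derivation of both the reverse implication and the value $p_g=1$ from a single transparent calculation, at the price of a heavier external input in the forward direction.
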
 

\begin{remark} In fact, in \cite[Theorem 2.3]{Ishii85}, it is shown  that, if $X$ is an isolated Gorenstein Du Bois singularity, then  the canonical bundle of $\hX$ satisfies:
$$K_{\hX} = \scrO_{\hX}(D - E_{\text{\rm{ess}}})$$
where $D$ is effective and $E_{\text{\rm{ess}}}\subseteq E$ is a reduced divisor (necessarily with normal crossings), the \textsl{essential divisor} of $\pi$. In particular, there is an isomorphism 
$$T_{\hX}\cong  \Omega^{n-1}_{\hX}\otimes K_{\hX}^{-1}  = \Omega^{n-1}_{\hX}\otimes \scrO_{\hX}(-D + E_{\text{\rm{ess}}})$$ and hence  there are inclusions
$$T_{\hX}(-E) \subseteq T_{\hX}(-E_{\text{\rm{ess}}}) \cong \Omega^{n-1}_{\hX} (-D) \subseteq \Omega^{n-1}_{\hX} .$$
\end{remark}

\begin{remark} If $X$ is $0$-liminal, there is a natural map $\pi^*(\mathfrak{m}_x\omega_X )\to K_{\hX}$, hence there is a sequence of inclusions
$$\mathfrak{m}_x\omega_X \subseteq \pi_* K_{\hX} \subseteq \omega_X.$$
Also, the case $\pi_* K_{\hX} = \omega_X$ is not possible, since otherwise the isomorphism $\omega_X \to \pi_* K_{\hX}$ would yield an injection $\pi^*\omega_X = \scrO_{\hX} \to  K_{\hX}$ and the singularity would be canonical, hence rational. Thus $\pi_* K_{\hX} = \mathfrak{m}_x\omega_X$. 
\end{remark}

\begin{remark}\label{infmono}  In terms of the link invariants $\ell^{p,q}$, an isolated Gorenstein  Du Bois singularity is rational $\iff$  $\ell^{0,n-1} =   0$ $\iff$ $\ell^{n,0} = 0$, and, if it  is not rational, then $\ell^{0,n-1} =   \ell^{n,0} = 1$.   In the case of a local complete intersection, an isolated  Du Bois singularity is rational $\iff$ $s_n =0$  and, if it  is not rational, then $s_n =1$, by a result of Steenbrink \cite[Proposition 2.13]{Steenbrink} that $s_n = \dim H^{n-1}(\hX; \scrO_{\hX}) $. Since $s_0 =0$ for every Du Bois singularity (cf.\ \cite[p.\ 1372]{SteenbrinkDB}), it follows that, for an isolated  $0$-liminal lci singularity,  since $s_n \neq s_0$, the mixed Hodge structure on $H^n(M)$ is never pure. In particular, the monodromy weight filtration on the limiting mixed Hodge structure of any global smoothing $\mathcal{Y} \to \Delta$  of a generalized Calabi-Yau variety which has  an isolated  $0$-liminal lci singularity is nontrivial. As the monodromy weight filtration is defined by the nilpotent operator $N$, where $N=\log T_u$ is the logarithm of the unipotent part of the monodromy operator $T$,  $T_u$ and hence $T$ have infinite order. A similar result applies in  case   $Y$ has a general isolated Gorenstein $0$-liminal singularity, not necessarily lci.
\end{remark}

One class of examples of $0$-liminal singularities is given by the following definition:

\begin{definition}\label{def-goodanti} The singularity $X$ has a \textsl{good anticanonical resolution} if there exists a good (i.e.\ log) resolution as above with $K_{\hX} = \scrO_{\hX}(-E)$. (This is closely related to Ishii's definition of an ``essential resolution" \cite[Definition 3.5]{Ishii85}  which combines aspects of this definition with crepant resolutions.) In this case, by adjunction,  $\omega_E = K_{\hX} \otimes  \scrO_{\hX}(E)|E =\scrO_E$. 
\end{definition}  

\begin{remark} In dimension $2$, simple elliptic and cusp singularities always have a good anticanonical resolution (just as rational double points always have a crepant resolution). However, this is no longer true for $0$-liminal singularities in dimension at least $3$ (just as rational Gorenstein  singularities in dimension at least $3$ need not have a crepant resolution). 
\end{remark} 

\begin{lemma}\label{goodisstrict} If $X$ has a good anticanonical resolution, then $X$ is $0$-liminal.
\end{lemma}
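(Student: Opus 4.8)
The goal is to show that a singularity $X$ with a good anticanonical resolution is $0$-liminal, i.e.\ Du Bois but not rational. By Definition~\ref{def-goodanti}, we have a good resolution $\pi\colon \hX \to X$ with $K_{\hX} = \scrO_{\hX}(-E)$. The plan is to verify the two halves of the $0$-liminal condition separately, using the characterizations of Du Bois and rational singularities in terms of cohomology of $\scrO_{\hX}$ and $\scrO_E$ recorded in Lemma~\ref{lemma1.1} and Theorem~\ref{Thm1.1}.

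First I would establish that $X$ is Du Bois. By Lemma~\ref{lemma1.1}(ii), this is equivalent to the vanishing $H^i(\hX; \scrO_{\hX}(-E)) = 0$ for all $i > 0$. Since $K_{\hX} = \scrO_{\hX}(-E)$ by hypothesis, this is precisely the statement $H^i(\hX; K_{\hX}) = 0$ for $i > 0$, which is exactly the Grauert--Riemenschneider vanishing theorem (already invoked in the proof of Lemma~\ref{lemma1.1}(i)). So the Du Bois property follows immediately.

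Next I would show that $X$ is \emph{not} rational. A Gorenstein singularity is rational if and only if it is canonical, and being canonical would require (for some, hence any, resolution) that the discrepancies be nonnegative, i.e.\ $K_{\hX} = \pi^* K_X + \sum a_i E_i$ with all $a_i \ge 0$. Since $X$ is Gorenstein with trivial canonical class locally, $\pi^* K_X = \scrO_{\hX}$, so the hypothesis $K_{\hX} = \scrO_{\hX}(-E) = \scrO_{\hX}(-\sum E_i)$ gives all discrepancies $a_i = -1$. These are the minimal possible (log canonical) values and are strictly negative, so $X$ is log canonical but not canonical, hence not rational. Alternatively, and perhaps more in the spirit of the invariants developed here, I would use Theorem~\ref{Thm1.1}: since $X$ is Du Bois it is log canonical, and rationality would force $p_g(X) = \dim H^{n-1}(E; \scrO_E) = 0$; but by adjunction $\omega_E = \scrO_E$ (as noted in Definition~\ref{def-goodanti}), and Serre duality on the (possibly non-smooth but projective) exceptional divisor $E$ gives $H^{n-1}(E; \scrO_E) \cong H^0(E; \omega_E)^\vee = H^0(E; \scrO_E)^\vee \ne 0$, contradicting rationality.

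The only genuinely delicate point is the last duality computation on the reducible, generally singular divisor $E$: one must check that $H^{n-1}(E;\scrO_E) \neq 0$ using $\omega_E = \scrO_E$, which requires Serre duality in the form appropriate for the Gorenstein scheme $E$ (dimension $n-1$), together with the fact that $H^0(E;\scrO_E) \neq 0$. This is routine but is the one place where care with the non-normality of $E$ is needed; the discrepancy argument sidesteps it entirely, so I expect the cleanest writeup to combine Grauert--Riemenschneider for the Du Bois half with the discrepancy computation for the non-rationality half.
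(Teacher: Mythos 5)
Your proposal is correct, and its core argument --- Grauert--Riemenschneider vanishing applied to $K_{\hX} = \scrO_{\hX}(-E)$ to get the Du Bois property via Lemma~\ref{lemma1.1}(ii), followed by Serre duality on the Gorenstein divisor $E$ with $\omega_E \cong \scrO_E$ to see $H^{n-1}(E;\scrO_E) \neq 0$ --- is exactly the paper's proof. The discrepancy computation you offer as an alternative for the non-rationality half is also valid (all $a_i = -1$, and canonical is equivalent to rational for Gorenstein singularities), but the paper's route has the small advantage of producing the precise value $p_g(X) = \dim H^{n-1}(E;\scrO_E) = 1$, consistent with Theorem~\ref{Thm1.1} and used later in the paper.
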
 
\begin{proof} By the  Grauert-Riemenschneider theorem,  for $i>0$, $H^i(\hX; \scrO_{\hX}(-E)) = H^i(\hX; K_{\hX}) = 0$. Thus $X$ is Du Bois and $H^{n-1}(\hX; \scrO_{\hX}) \cong H^{n-1}(E; \scrO_E) $.  Since $\omega_E \cong\scrO_E$, $H^{n-1}(E; \scrO_E) \cong H^{n-1}(E; \omega_E) $ is Serre dual to $H^0(E;\scrO_E)$ and thus has dimension $1$. In particular, $X$ is not rational, so it is $0$-liminal.
\end{proof} 

The next lemma shows that, in many cases, it is easy to check that a singularity with $K_{\hX} = \scrO_{\hX}(-E)$ has good local properties: 

\begin{lemma}\label{newlemma1.16}  {\rm(i)} If $X$  is  an isolated normal  Cohen-Macaulay singularity such that  $K_{\hX} = \scrO_{\hX}(-E)$, then $X$ is Gorenstein.

\smallskip
\noindent {\rm(i)}  If $X$  is  an isolated normal  singularity such that  $K_{\hX} = \scrO_{\hX}(-E)$, where $E$ is smooth, $h^i(E;\scrO_E) =0$ for $1\le i \le n-2$ (i.e.\ $E$ is a strict Calabi-Yau manifold)  and  $\scrO_{\hX}(-E)|E$ is nef and big, then $X$ is Cohen-Macaulay and hence Gorenstein. 
\end{lemma}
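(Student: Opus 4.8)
The plan is to prove the first statement by pushing forward the trivialization of the canonical bundle from the smooth locus, and to prove the second by reducing Cohen--Macaulayness to a cohomological vanishing that Grauert--Riemenschneider supplies; the Gorenstein conclusion in the second statement then follows from the first.

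For the first statement, I would work on $U = X\setminus\{x\} = \hX\setminus E$, where $\pi$ is an isomorphism. Since $E$ is disjoint from $U$, restriction gives $\omega_U = K_{\hX}|_U = \scrO_{\hX}(-E)|_U = \scrO_U$. Because $X$ is normal and Cohen--Macaulay, its dualizing sheaf $\omega_X$ is reflexive (it satisfies $S_2$), so it equals $j_*(\omega_X|_U) = j_*\omega_U$ for the inclusion $j\colon U\hookrightarrow X$, the point $x$ having codimension $\ge 2$. Pushing the isomorphism $\omega_U \cong \scrO_U$ forward along $j$ and using $j_*\scrO_U = \scrO_X$ (normality) yields $\omega_X \cong \scrO_X$. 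Thus $\omega_X$ is invertible, and since $X$ is Cohen--Macaulay it is Gorenstein.

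For the second statement, I would invoke the criterion of Lemma~\ref{lemma1.1}(i): a normal isolated singularity $X$ is Cohen--Macaulay if and only if $H^i(\hX;\scrO_{\hX}) = 0$ for $0<i<n-1$. Taking cohomology of the ideal sequence
$$0\to \scrO_{\hX}(-E)\to\scrO_{\hX}\to\scrO_E\to 0,$$
I would use that $\scrO_{\hX}(-E) = K_{\hX}$ together with Grauert--Riemenschneider and the Stein hypothesis on $X$ to get $H^i(\hX;\scrO_{\hX}(-E)) = H^i(\hX;K_{\hX}) = 0$ for all $i>0$. The long exact sequence then gives isomorphisms $H^i(\hX;\scrO_{\hX})\cong H^i(E;\scrO_E)$ for every $i\ge 1$. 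Since by hypothesis $H^i(E;\scrO_E)=0$ for $1\le i\le n-2$, the vanishing $H^i(\hX;\scrO_{\hX})=0$ holds for $0<i<n-1$, so $X$ is Cohen--Macaulay; the first statement then upgrades this to Gorenstein.

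The computation itself is short, so the real content is conceptual: identifying what each hypothesis contributes. Adjunction already forces $\omega_E = (K_{\hX}\otimes\scrO_{\hX}(E))|_E = \scrO_E$, so $E$ is automatically Calabi--Yau once it is smooth, and the assumed vanishing of the intermediate groups $H^i(E;\scrO_E)$ is precisely the strictness. The step I would check most carefully is whether the nef-and-big hypothesis on $\scrO_{\hX}(-E)|E$ is actually used in the cohomology argument: as organized above it is not, entering only to guarantee the geometric consistency of the contraction $\pi$ (connectedness of $E=\pi^{-1}(x)$ and its being blown down to the single point $x$), so that $E$ is genuinely a strict Calabi--Yau manifold in the intended applications. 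A secondary point is that the reflexivity identification $\omega_X = j_*\omega_U$ should be applied only once Cohen--Macaulayness is known, which is why in the second statement I deduce the Gorenstein property from the first rather than directly.
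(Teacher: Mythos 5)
Your proposal is correct. Part (i) is essentially the paper's argument: restrict to $U$, where $K_{\hX}|_U=\scrO_U$, and push forward to get $\omega_X = j_*\scrO_U=\scrO_X$, whence Gorenstein given Cohen--Macaulayness. Part (ii), however, takes a genuinely different and in fact more economical route. The paper verifies $R^i\pi_*\scrO_{\hX}=0$ for $1\le i\le n-2$ via the formal functions theorem, computing cohomology on the infinitesimal neighborhoods of $E$; the graded pieces are $\scrO_E(-kE)$, and for $k\ge 1$ their higher cohomology is killed by Kawamata--Viehweg, which is exactly where the nef-and-big hypothesis on $\scrO_{\hX}(-E)|E$ enters. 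You instead take cohomology of $0\to\scrO_{\hX}(-E)\to\scrO_{\hX}\to\scrO_E\to 0$ and use that $\scrO_{\hX}(-E)=K_{\hX}$ has no higher cohomology by Grauert--Riemenschneider (plus Lerayness over the Stein base), giving $H^i(\hX;\scrO_{\hX})\cong H^i(E;\scrO_E)$ for all $i\ge 1$ in one step --- the same mechanism the paper itself uses in Lemma~\ref{lemma1.1}(ii) and Lemma~\ref{goodisstrict}. Your observation that the nef-and-big hypothesis is then not needed for the cohomological conclusion is accurate (and neither is smoothness of $E$, for that matter); those hypotheses are doing work elsewhere in the paper (e.g.\ ensuring contractibility of $E$ and feeding into the later nonsmoothability arguments), not in this vanishing. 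The one stylistic quibble is your remark that the identification $\omega_X=j_*\omega_U$ requires Cohen--Macaulayness: for a normal variety the canonical/dualizing module is reflexive regardless, so (i) needs no such care --- CM is used only at the very end to pass from ``$\omega_X$ invertible'' to ``Gorenstein,'' exactly as you in fact do.
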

\begin{proof} (i)  Under these hypotheses,  $K_U = K_{\hX}|\hX-E  \cong \scrO_U$ and thus $\omega_X = i_*K_U \cong i_*\scrO_U = \scrO_X$.

\smallskip
\noindent (ii) By Lemma~\ref{lemma1.1}, it suffices to show that $H^i(\hX; \scrO_{\hX}) =0$ for $1\le i \le n-2$, or equivalently that $R^i\pi_*\scrO_{\hX} =0$ for $1\le i \le n-2$. Using the formal functions theorem and the hypothesis that $h^i(E;\scrO_E) =0$ for $1\le i \le n-2$, this is an easy consequence of the fact that, by the Kawamata-Viehweg vanishing theorem, $H^i(E; \scrO_E(-nE)) =0$ for all $i > 0$. 
\end{proof} 

\begin{remark}  By Fujino \cite[Theorem 2.8]{Fujino2011}, if $(X,x)$ is Gorenstein and $0$-liminal, then there exists a partial resolution $\hX\to X$ with exceptional divisor $E$ such that $(\hX,E)$ is a $\Q$-factorial dlt pair with $K_{\hX} = \scrO_{\hX}(-E)$. The case of a good anticanonical resolution corresponds precisely to the special case when $\hX$ is smooth and $E$ has simple normal crossings. 
\end{remark}

\begin{example}\label{95} Another set of examples is given by weighted homogeneous hypersurface singularities $f=0$ of the appropriate weights. This case is formally similar to that of an $X$  with a good anticanonical resolution with a smooth exceptional divisor. Here, using the usual notation that $\Cee^*$ acts on $\Cee^{n+1}$ with weights $a_i$, $d$ is the weighted degree of $f \in \Cee[z_1, \dots, z_{n+1}]$, and $w_i = a_i/d$, the $0$-liminal condition is that $\sum_iw_i = 1$, or equivalently that  \begin{equation}\label{eq-0lim}
N = \sum_ia_i - d= d(\sum_iw_i -1) = 0.
\end{equation}
 If $\underline{E}$ is the orbifold stack associated to $E$, the numerical condition $N=0$  is equivalent to the condition $K_{\underline{E}} = \scrO_{\underline{E}}$.  For example, in dimension $3$, Reid \cite{Reid79} found $95$  families of  weighted homogeneous hypersurface singularities, mostly not in diagonal form (see also Yonemura \cite{Yonemura},  Iano-Fletcher \cite{Iano-Fletcher}). Of these, there are $14$ cases which are defined by a  deformation of a diagonal equation $z_1^p + z_2^q + z_3 ^r + z_4^s=0$, subject to the condition
$$\frac{1}{p} + \frac{1}{q} + \frac{1}{r} + \frac{1}{s} = 1.$$
\end{example}

\subsection{The exceptional divisor}\label{The exceptional divisor}  Next we discuss the exceptional divisor $E= \bigcup_iE_i$ in more detail. For a subset $I = \{i_1< i_2 < \cdots < i_r\}$, let $E_I = \bigcap_{i\in I}E_i$ be the $r$-fold intersection (if nonempty). We have the Mayer-Vietoris spectral sequence 
$$E_1^{p,q} = \bigoplus_{\#(I) = p+1}H^q(E_I; \scrO_{E_I}) \implies H^{p+q}(E; \scrO_E),$$
with $E_1$ page of the form 

\bigskip

\begin{center}
\begin{tabular}{|c|c|c|c}
$\bigoplus_iH^{n-1}(E_i; \scrO_{E_i})$ &  &  &  \\ \hline
$\bigoplus_iH^{n-2}(E_i; \scrO_{E_i})$ & $\bigoplus_{\#(I) = 2}H^{n-2}(E_I; \scrO_{E_I})$ &   &{} \\ \hline
$\bigoplus_iH^{n-3}(E_i; \scrO_{E_i})$ & $\bigoplus_{\#(I) = 2}H^{n-3}(E_I; \scrO_{E_I})$ &  $\bigoplus_{\#(I) = 3}H^{n-3}(E_I; \scrO_{E_I})$ &{} \\ \hline
\vdots &  \vdots  & \vdots  &\vdots \\ \hline
$\bigoplus_iH^0(E_i; \scrO_{E_i})$ & $\bigoplus_{\#(I) = 2}H^0(E_I; \scrO_{E_I})$ &  $\bigoplus_{\#(I) = 3}H^0(E_I; \scrO_{E_I})$ &  \dots \\ \hline
\end{tabular}

\end{center}

\bigskip

By inspection, we have:

\begin{lemma}\label{lemma1.8}   For a general isolated singularity, the map $H^{n-1}(E; \scrO_E) \to \bigoplus_iH^{n-1}(E_i; \scrO_{E_i})$ is surjective. Hence, if  $X$ is $0$-liminal, then $\bigoplus_iH^{n-1}(E_i; \scrO_{E_i})$ has dimension $0$ or $1$. Moreover, $\bigoplus_iH^{n-1}(E_i; \scrO_{E_i})$ has dimension $1$ $\iff$ there exists an $i$ such that $\dim H^{n-1}(E_i; \scrO_{E_i}) = \dim  H^0(E_i; K_{E_i}) = 1$ and $H^{n-1}(E_j; \scrO_{E_j}) =0$ for $j\neq i$. \qed
\end{lemma}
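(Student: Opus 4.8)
The plan is to read the result directly off the Mayer-Vietoris spectral sequence displayed above, supplemented by a Serre duality argument on the individual components $E_i$.

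\textbf{Step 1 (Identifying the relevant corner).} I would first note that the restriction of sheaves $\scrO_E \to \bigoplus_i \scrO_{E_i}$ induces the map $H^{n-1}(E; \scrO_E) \to \bigoplus_i H^{n-1}(E_i; \scrO_{E_i})$ in the statement, and that this is precisely the edge homomorphism of the Mayer-Vietoris spectral sequence onto the corner term $E_\infty^{0, n-1}$. Since the abutment filtration satisfies $E_\infty^{0, n-1} = F^0 H^{n-1}/F^1 H^{n-1}$, the edge map is the quotient map $H^{n-1}(E; \scrO_E) \to E_\infty^{0, n-1}$, so surjectivity of the natural map will follow as soon as I show $E_\infty^{0, n-1} = E_1^{0, n-1} = \bigoplus_i H^{n-1}(E_i; \scrO_{E_i})$.

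\textbf{Step 2 (Vanishing of differentials by dimension count).} Next I would observe that $E_1^{0,n-1}$ occupies the top-left corner of the $E_1$ page, so every incoming differential $d_r \colon E_r^{-r, n+r-2} \to E_r^{0, n-1}$ vanishes because its source has negative column index. For the outgoing differentials $d_r \colon E_r^{0, n-1} \to E_r^{r, n-r}$ with $r \ge 1$, the target is a subquotient of $E_1^{r, n-r} = \bigoplus_{\#(I) = r+1} H^{n-r}(E_I; \scrO_{E_I})$. Since $E$ has simple normal crossings and each $E_i$ is a smooth divisor in $\hX$ of dimension $n-1$, every $(r+1)$-fold intersection satisfies $\dim E_I = (n-1) - r = n-1-r$. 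As $n-r > n-1-r = \dim E_I$, the group $H^{n-r}(E_I; \scrO_{E_I})$ vanishes for all $r \ge 1$, whence $E_r^{r, n-r} = 0$ and the outgoing differentials are zero as well. Therefore $E_\infty^{0, n-1} = E_1^{0, n-1}$, which gives surjectivity.

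\textbf{Step 3 (Dimension bound and the dichotomy).} When $X$ is $0$-liminal, Theorem~\ref{Thm1.1} gives $\dim H^{n-1}(E; \scrO_E) = 1$, so its quotient $\bigoplus_i H^{n-1}(E_i; \scrO_{E_i})$ has dimension $0$ or $1$. If this dimension equals $1$, the direct-sum structure forces a unique index $i$ with $\dim H^{n-1}(E_i; \scrO_{E_i}) = 1$ and $H^{n-1}(E_j; \scrO_{E_j}) = 0$ for $j \ne i$. Finally, since each $E_i$ is a smooth projective variety of dimension $n-1$, Serre duality yields $H^{n-1}(E_i; \scrO_{E_i}) \cong H^0(E_i; K_{E_i})\spcheck$, so $\dim H^0(E_i; K_{E_i}) = \dim H^{n-1}(E_i; \scrO_{E_i}) = 1$. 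The converse implication is immediate from the direct-sum description.

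\textbf{Expected main obstacle.} There is no genuine analytic difficulty here: once the spectral sequence is in place the argument is entirely formal, which is why the statement is prefaced by ``by inspection.'' The one point requiring care is bookkeeping the direction of convergence — namely that the corner term $E_\infty^{0,n-1}$ is the top \emph{quotient} rather than a subobject of the abutment, so that one obtains \emph{surjectivity} (and not injectivity) of the natural map. Confirming that the abstract edge homomorphism coincides with the concrete restriction map $H^{n-1}(E;\scrO_E) \to \bigoplus_i H^{n-1}(E_i;\scrO_{E_i})$ is the only other step to verify with attention.
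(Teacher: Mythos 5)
Your proposal is correct and is exactly the argument the paper intends: the lemma is stated as following ``by inspection'' from the displayed $E_1$ page of the Mayer--Vietoris spectral sequence, and your Steps 1--2 simply make that inspection explicit (the outgoing differentials from the corner term $E_1^{0,n-1}$ die because $H^{n-r}(E_I;\scrO_{E_I})=0$ for $\dim E_I = n-1-r$), with the dichotomy and Serre duality handled as in Step 3. No discrepancies with the paper's reasoning.
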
 

\begin{corollary}\label{cor1.8}  If $X$ is $0$-liminal, then $H^0(E; \Omega^{n-1}_E/\tau ^{n-1}_E) $ has dimension $0$ or $1$, and has dimension $1$ $\iff$ there exists an $i$ such that $ \dim  H^0(E_i; K_{E_i}) = 1$ and $H^0(E_j; K_{E_j}) =0$ for $j\neq i$.  
\end{corollary}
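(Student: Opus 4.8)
The plan is to reduce the statement to Lemma~\ref{lemma1.8} by computing the sheaf $\Omega^{n-1}_E/\tau^{n-1}_E$ explicitly in terms of the components $E_i$. The crucial observation is a dimension count: since $E$ is simple normal crossings of dimension $n-1$, every $r$-fold intersection $E_I$ with $r\ge 2$ has dimension at most $n-2$ and therefore carries no nonzero $(n-1)$-forms, i.e.\ $\Omega^{n-1}_{E_I}=0$. Thus at the top exterior degree $p=n-1$ the combinatorial data of the normal crossing variety collapses onto the components $E_i$ alone, with no gluing or higher-order contribution from the singular locus $E_{\text{sing}}$.

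Concretely, writing $\nu_i\colon E_i\hookrightarrow E$ for the inclusions, I would first analyze the restriction map $\Omega^{n-1}_E\to \bigoplus_i(\nu_i)_*\Omega^{n-1}_{E_i}=\bigoplus_i(\nu_i)_*K_{E_i}$. Working in local coordinates $x_1,\dots,x_{n+1}$ on an ambient smooth germ with $E=\{x_1\cdots x_k=0\}$, one checks that this map kills precisely the torsion subsheaf $\tau^{n-1}_E$ (the differentials supported on $E_{\text{sing}}$) and is surjective onto each summand: the $(n-1)$-form $dx_1\wedge\cdots\widehat{dx_j}\cdots\wedge dx_k\wedge(\text{transverse part})$ restricts to a generator of $K_{E_j}$ on the branch $\{x_j=0\}$ and to zero on every other branch, so the branches are hit independently. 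Because $\Omega^{n-1}_{E_I}=0$ for $\#I\ge 2$, there is no compatibility condition to impose along the intersections, and hence the induced map is an isomorphism
$$\Omega^{n-1}_E/\tau^{n-1}_E \xrightarrow{\ \sim\ } \bigoplus_i(\nu_i)_*K_{E_i}.$$
Taking global sections gives $H^0(E;\Omega^{n-1}_E/\tau^{n-1}_E)\cong \bigoplus_iH^0(E_i;K_{E_i})$, an identity valid for any isolated singularity with a log resolution, independent of the $0$-liminal hypothesis.

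It then remains to translate Lemma~\ref{lemma1.8}. By Serre duality on the smooth projective $(n-1)$-fold $E_i$, one has $\dim H^0(E_i;K_{E_i})=\dim H^{n-1}(E_i;\scrO_{E_i})$ for each $i$, so $\dim H^0(E;\Omega^{n-1}_E/\tau^{n-1}_E)=\dim\bigoplus_iH^{n-1}(E_i;\scrO_{E_i})$. The $0$-liminal hypothesis now lets me invoke Lemma~\ref{lemma1.8}: this dimension is $0$ or $1$, and equals $1$ exactly when there is a unique index $i$ with $\dim H^{n-1}(E_i;\scrO_{E_i})=1$, i.e.\ $\dim H^0(E_i;K_{E_i})=1$, and $H^{n-1}(E_j;\scrO_{E_j})=0$, i.e.\ $H^0(E_j;K_{E_j})=0$, for all $j\neq i$. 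This is precisely the asserted characterization.

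The main obstacle is the local verification that the restriction map identifies the torsion-free quotient $\Omega^{n-1}_E/\tau^{n-1}_E$ with $\bigoplus_i(\nu_i)_*K_{E_i}$; everything else is formal. One should be careful to establish a genuine sheaf isomorphism rather than merely a map inducing an equality of dimensions, but since the double and higher intersections contribute no top forms, both the surjectivity and the description of the kernel reduce to the one- and two-branch local models, where they are immediate. Alternatively, one could obtain the same identification Hodge-theoretically, since by Proposition~\ref{linkexseq} the group $H^0(E;\Omega^{n-1}_E/\tau^{n-1}_E)$ is the top graded piece $\Gr_F^{n-1}H^{n-1}(E)$ and the Mayer--Vietoris description of this piece again forces all contributions from $\#I\ge 2$ to vanish.
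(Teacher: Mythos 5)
Your argument is correct and follows the same route as the paper: the paper's proof of Corollary~\ref{cor1.8} is exactly the identification $H^0(E; \Omega^{n-1}_E/\tau^{n-1}_E) \cong \bigoplus_iH^0(E_i; \Omega^{n-1}_{E_i})$ followed by duality and Lemma~\ref{lemma1.8}, with the sheaf-level isomorphism asserted rather than verified in local coordinates as you do. The only quibble is a harmless indexing slip (the ambient germ is $n$-dimensional, so the local coordinates should be $x_1,\dots,x_n$, not $x_1,\dots,x_{n+1}$); the substance of the local computation and the observation that $\Omega^{n-1}_{E_I}=0$ for $\#(I)\ge 2$ kills all compatibility conditions are exactly right.
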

\begin{proof} This follows from Lemma~\ref{lemma1.8} since $H^0(E; \Omega^{n-1}_E/\tau ^{n-1}_E) \cong \bigoplus_iH^0(E_i; \Omega^{n-1}_{E_i})$ which is dual to $\bigoplus_iH^{n-1}(E_i; \scrO_{E_i})$. 
\end{proof}  

For more discussion, especially in terms of the ``essential divisor" of $\pi\colon \hX \to X$ and the Hodge type of $E$, see \cite{Ishii85} as well as \cite{Fujino2011} and  \cite{Ishii2012}.

We note the following consequence of the Mayer-Vietoris spectral sequence:

\begin{lemma}\label{MVconseq}   Suppose that  $H^i(E;\scrO_E) =0$ for $0< i < m$.  If $\Gamma$ denotes the dual complex of $E$ and $|\Gamma|$ is its topological realization, then $H^i(|\Gamma|; \Cee) =0$ for $0< i < m$.  If  $H^1(E; \scrO_E) =0$,  then  the map
$$\bigoplus_iH^1(E_i; \scrO_{E_i})\to \bigoplus_{\#(I) = 2}H^1(E_I; \scrO_{E_I})$$
is injective. 
\end{lemma}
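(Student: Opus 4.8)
The plan is to read off both statements from the degeneration at $E_2$ of the Mayer--Vietoris spectral sequence for $\scrO_E$ displayed above, which I would obtain as the lowest Hodge piece of the corresponding spectral sequence with $\Cee$-coefficients. Since $E$ is a projective simple normal crossing variety, each intersection $E_I$ is smooth and projective, so $H^q(E_I;\Cee)$ is a pure Hodge structure of weight $q$ and $H^q(E_I;\scrO_{E_I})=\Gr^0_FH^q(E_I;\Cee)$ is its $(0,q)$-component. Thus the $E_1$-page $E_1^{p,q}=\bigoplus_{\#(I)=p+1}H^q(E_I;\scrO_{E_I})$ of the $\scrO_E$ spectral sequence is exactly $\Gr^0_F$ applied to the $E_1$-page $\bigoplus_{\#(I)=p+1}H^q(E_I;\Cee)$ of the analogous $\Cee$-coefficient Mayer--Vietoris spectral sequence converging to $H^{p+q}(E;\Cee)$.

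The key input I would invoke is Deligne's theorem that this $\Cee$-coefficient spectral sequence is a spectral sequence of mixed Hodge structures which degenerates at $E_2$; concretely, $E_2^{p,q}$ is pure of weight $q$ (it equals $\Gr^W_qH^{p+q}(E;\Cee)$), and for $r\ge 2$ the differential $d_r$ shifts the weight by $1-r\neq 0$ and so must vanish. Applying the exact functor $\Gr^0_F$ to the whole spectral sequence --- which is legitimate because the differentials are morphisms of mixed Hodge structures and the Hodge filtration is strict --- shows that the $\scrO_E$ spectral sequence is the $\Gr^0_F$ of the $\Cee$-one, and in particular also degenerates at $E_2$. Consequently $E_2^{p,q}(\scrO_E)=\Gr^0_FE_2^{p,q}(\Cee)$ and $H^n(E;\scrO_E)=\bigoplus_{p+q=n}E_2^{p,q}(\scrO_E)$, so that each term $E_2^{p,q}(\scrO_E)$ is a direct summand of $H^{p+q}(E;\scrO_E)$.

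With this in hand both assertions are immediate. For the first, I note that the bottom row $q=0$ of the $E_1$-page, together with its $d_1$ differential, is precisely the simplicial cochain complex of the dual complex $\Gamma$: each $H^0(E_I;\scrO_{E_I})$ is $\Cee$ on each connected component of $E_I$, and $d_1$ is the alternating sum of restrictions. Hence $E_2^{p,0}(\scrO_E)=H^p(|\Gamma|;\Cee)$, which is a summand of $H^p(E;\scrO_E)$; since the latter vanishes for $0<p<m$, so does $H^p(|\Gamma|;\Cee)$. For the second, the term $E_2^{0,1}(\scrO_E)$ is by definition the kernel of $\bigoplus_iH^1(E_i;\scrO_{E_i})\to\bigoplus_{\#(I)=2}H^1(E_I;\scrO_{E_I})$, and it is a summand of $H^1(E;\scrO_E)=0$; thus the kernel is zero and the map is injective.

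The main obstacle is the justification that the $\scrO_E$ spectral sequence inherits $E_2$-degeneration from the $\Cee$-coefficient one, i.e.\ the compatibility of the coherent Mayer--Vietoris spectral sequence with the weight and Hodge filtrations on $H^\bullet(E;\Cee)$; everything else is formal bookkeeping with the spectral sequence, once the $E_I$ are known to be smooth projective so that the requisite purity holds. I would handle this by realizing both spectral sequences from a single filtered complex (the cohomological descent complex of $E$) equipped with the Hodge filtration $F$, and quoting the standard strictness of $F$ in the theory of mixed Hodge structures on simple normal crossing varieties.
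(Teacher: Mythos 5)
Your argument is correct and follows essentially the same route as the paper: the paper likewise deduces $E_2$-degeneration of the $\scrO_E$ Mayer--Vietoris spectral sequence from that of the weight spectral sequence with $\Cee$-coefficients, identifies the bottom row of the $E_2$-page with $H^\bullet(|\Gamma|;\Cee)$, and reads off the injectivity statement from the $E_2^{0,1}$ term. Your write-up merely spells out in more detail (via $\Gr^0_F$ and strictness) the implication that the paper states in one line.
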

\begin{proof}  The usual Mayer-Vietoris weight spectral sequence $E_1^{p,q} = \bigoplus_{\#(I) = p+1}H^q(E_I; \Cee) \implies H^{p+q}(E; \Cee) $  degenerates at $E_2$, and hence the Mayer-Vietoris spectral sequence for $\scrO_E$ does as well.  The bottom row of the $E_2$ term computes  $H^i(|\Gamma|; \Cee)$.  Thus, if   $H^i(E; \scrO_E) =0$ for $0< i  < m$, then $H^i(|\Gamma|; \Cee) =0$ for $0< i < n-1$ as well. The argument for the injectivity of $\bigoplus_iH^1(E_i; \scrO_{E_i})\to \bigoplus_{\#(I) = 2}H^1(E_I; \scrO_{E_I})$ in case $H^1(E; \scrO_E) =0$ is similar.
\end{proof}

In Section~\ref{section3},  we will need a lemma due to Namikawa  \cite[Lemma 2.2]{namtop}.  Before stating it,  we first fix some notation.  For any   analytic space $S$, we have the usual map
$$\delta = \frac{d \log}{2\pi\sqrt{-1}} \colon H^1(S; \scrO_S^*) \to H^1(S; \Omega^1_S).$$

\begin{lemma}\label{namlemma}  If $X$ is normal  and   $H^1(\hX; \scrO_{\hX}) =0$, the map
$$H^0(X; R^1\pi_*\scrO_{\hX}^*)\otimes_{\Zee}\Cee  \xrightarrow{\delta\otimes 1} H^0(X; R^1\pi_*\Omega^1_{\hX})$$
is injective. 
\end{lemma}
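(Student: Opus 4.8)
The plan is to reduce the statement to the \emph{integral} first Chern class via the exponential sequence, and then to recover $\delta$ from that integral class by a Hodge-theoretic projection. Since $X$ is a contractible Stein representative of the germ and $\pi$ is proper, all the higher direct image sheaves involved are supported at $x$, and for a coherent sheaf $\mathcal{F}$ one has $H^0(X; R^1\pi_*\mathcal{F}) = H^1(\hX; \mathcal{F})$ (positive-degree coherent cohomology of the Stein space $X$ vanishes, and $E$ is a deformation retract of $\hX$). In particular the target becomes $H^1(\hX; \Omega^1_{\hX})$, the source
$$P := H^0(X; R^1\pi_*\scrO^*_{\hX}) = \varinjlim_{x\in V}\Pic(\pi^{-1}(V))$$
is the Picard group of the germ of $\hX$ along $E$, and $\delta\otimes 1$ is identified with the complexified first Chern class $P\otimes_\Zee\Cee \to H^1(\hX; \Omega^1_{\hX})$.

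First I would feed the exponential sequence $0\to\Zee_{\hX}\to\scrO_{\hX}\xrightarrow{\exp}\scrO^*_{\hX}\to0$ into $R\pi_*$ and apply $H^0(X;-)$; using the identifications above and $H^0(X;R^2\pi_*\Zee_{\hX}) = H^2(E;\Zee)$, the relevant portion reads
$$H^1(\hX;\scrO_{\hX})\longrightarrow P\xrightarrow{\ c_1\ }H^2(E;\Zee),$$
where $c_1$ is the integral first Chern class. By hypothesis $H^1(\hX;\scrO_{\hX})=0$, so $c_1$ is injective. As $E$ is compact, $H^2(E;\Zee)$ is finitely generated, hence so is $P$; and since $\Cee$ is flat over $\Zee$, the map $c_1\otimes1\colon P\otimes_\Zee\Cee\hookrightarrow H^2(E;\Cee)$ is again injective. (This also explains the tensor with $\Cee$ in the statement: it kills the torsion of $P$, e.g.\ numerically trivial line bundles on the germ, which $\delta$ cannot see.)

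It remains to factor $\delta\otimes1$ through $c_1\otimes1$. Using the compatibility $d\log\circ\exp = d$, the exponential sequence maps to the truncated holomorphic de Rham complex, and the explicit \v{C}ech cocycle $\tfrac{1}{2\pi\sqrt{-1}}\{d\log g_{ij}\}$ attached to the transition functions of a line bundle shows that $\delta\otimes1 = \operatorname{pr}\circ(c_1\otimes1)$, where $\operatorname{pr}\colon H^2(E;\Cee)\to H^1(\hX;\Omega^1_{\hX})$ is the edge map onto the $(1,1)$-piece $\Gr^1_F$ of the Hodge filtration. Because $E$ is a compact simple normal crossing deformation retract of $\hX$, the Hodge--de Rham spectral sequence degenerates at $E_1$ (this is the degeneration recorded in Proposition~\ref{linkexseq}), so $H^2(E;\Cee)=H^2(\hX;\Cee)$ carries its mixed Hodge structure, $\operatorname{pr}$ is the genuine projection $F^1\twoheadrightarrow\Gr^1_F$, and $H^1(\hX;\Omega^1_{\hX})$ computes $\Gr^1_F H^2(E)$.

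Finally I would use that Chern classes are real classes of Hodge type $(1,1)$: each $c_1(L)$ lies in $F^1\cap\overline{F}^{\,1}$, so the entire image of $c_1\otimes1$ lies in the $\Cee$-subspace $F^1\cap\overline{F}^{\,1}$, on which $\operatorname{pr}$ is injective because its kernel there is $F^2\cap\overline{F}^{\,1}$, and this vanishes in $H^2(E)$ since that group has weights $\le 2$ (on each graded piece $\Gr^W_k$ with $k\le2$ one has $F^2\cap\overline{F}^{\,1}=0$ for type reasons). Combining this with the injectivity of $c_1\otimes1$ shows $\delta\otimes1$ is injective. I expect the main obstacle to be exactly this last Hodge-theoretic step: one must match the integral/de Rham Chern class with the Dolbeault ($d\log$) class through the Hodge--de Rham spectral sequence of the germ of $\hX$ along $E$, correctly identify $H^1(\hX;\Omega^1_{\hX})$ with $\Gr^1_F H^2(E)$, and check that no information is lost upon passing to the $(1,1)$-component; the Stein reduction and the exponential-sequence input are routine by comparison.
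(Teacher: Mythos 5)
Your overall strategy---reduce to the integral first Chern class via the exponential sequence, then recover $\delta$ from the topological class by Hodge theory---is genuinely different from the paper's argument and is in principle viable, but as written it has a gap at the crucial middle step. The problem is the identification ``$H^1(\hX;\Omega^1_{\hX})$ computes $\Gr^1_FH^2(E)$'' and the resulting ``projection'' $\operatorname{pr}\colon H^2(E;\Cee)\to H^1(\hX;\Omega^1_{\hX})$. The $E_1$-degeneration recorded in Proposition~\ref{linkexseq} concerns the complexes $\Omega^\bullet_E/\tau^\bullet_E$, $\Omega^\bullet_{\hX}(\log E)|E$ and $\Omega^\bullet_{\hX}(\log E)/\Omega^\bullet_{\hX}$ supported on the compact variety $E$; it gives $\Gr^1_FH^2(E)=H^1(E;\Omega^1_E/\tau^1_E)$, not $H^1(\hX;\Omega^1_{\hX})$. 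The naive Hodge--de Rham spectral sequence of the noncompact germ $\hX$ is not asserted (and is not known) to degenerate, and even if it did, its term $E_\infty^{1,1}$ would be a subobject of $H^1(\hX;\Omega^1_{\hX})$ receiving $F^1H^2(\hX;\Cee)/F^2$; there is no natural map from $H^2(E;\Cee)$ \emph{onto} $H^1(\hX;\Omega^1_{\hX})$, so the asserted factorization $\delta\otimes1=\operatorname{pr}\circ(c_1\otimes1)$ does not make sense as stated. The repair is to go the other way: post-compose $\delta$ with the restriction $H^1(\hX;\Omega^1_{\hX})\to H^1(E;\Omega^1_E/\tau^1_E)=\Gr^1_FH^2(E)$, check that the resulting class is the image of the topological $c_1(L)\in F^1H^2(E;\Cee)$ under $F^1\twoheadrightarrow\Gr^1_F$, and then your final step ($c_1(L)$ real and in $F^1$, together with $F^2\cap\overline{F}^1=0$ in $H^2(E)$ because the weights are at most $2$) does close the argument, in combination with the injectivity of $c_1\otimes1$ on $\Pic\hX\otimes_{\Zee}\Cee$ that you correctly extracted from $H^1(\hX;\scrO_{\hX})=0$.

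It is worth contrasting this with the paper's route, which avoids the mixed Hodge structure on $H^2(E)$ altogether. After the same Stein/Leray reductions and essentially the same use of the exponential sequence (to get $\Pic\hX\hookrightarrow\Pic E$ via $H^2(\hX;\Zee)\cong H^2(E;\Zee)$ and Steenbrink's $H^1(E;\scrO_E)=0$), the paper analyzes $\Pic E$ through a Mayer--Vietoris spectral sequence for $\scrO_E^*$ (Lemma~\ref{MVforOstar}) together with Lemma~\ref{MVconseq}, reducing modulo finite groups to $\bigoplus_i(\Pic E_i/\Pic^\tau E_i)$, where only classical Hodge theory on the smooth projective components $E_i$ is needed. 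Your approach, once repaired, is arguably shorter, but it leans on the compatibility of the $d\log$ Chern class with the Hodge filtration of Deligne's mixed Hodge structure on the singular compact variety $E$---exactly the point you flagged as the main obstacle, and the one your writeup does not actually establish.
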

\begin{proof} Since $X$ is  Stein, $H^q(X; R^p\pi_*\Omega^1_{\hX})=0$ for $q > 0$. Hence, by the Leray spectral sequence, $H^0(X; R^1\pi_*\Omega^1_{\hX} ) = H^1(\hX;\Omega^1_{\hX} )$.  By the exponential sheaf sequence and since $X$ is contractible as well as Stein,  $H^i(X; \scrO_X^*) = 0$ for $i > 0$. Since $X$ is normal, $\pi_*\scrO_{\hX}^* = \scrO_X^*$. Thus, again by the Leray spectral sequence,  $\Pic \hX = H^1(\hX; \scrO_{\hX}^*)  =  H^0(X; R^1\pi_*\scrO_{\hX}^*)$. So it will suffice to prove that $\delta\otimes 1 \colon  \Pic \hX \otimes_{\Zee}\Cee\to H^1(\hX;\Omega^1_{\hX} )$ is injective. 

By a result of Steenbrink \cite[Lemma 2.14]{Steenbrink}, if $H^1(\hX; \scrO_{\hX}) = 0$,  then  $H^1(E; \scrO_E) = 0$ as well. By the exponential sheaf sequence and the fact that $H^1(\hX; \scrO_{\hX}) = H^1(E; \scrO_E) = 0$  there is a commutative diagram
$$\begin{CD}
0 @>>> H^1(\hX; \scrO_{\hX}^*) @>>> H^2(\hX; \Zee) @>>> H^2(\hX; \scrO_{\hX})\\
@. @VVV @VV{\cong}V @VVV \\
 0 @>>> H^1(E; \scrO_E^*) @>>> H^2(E; \Zee) @>>> H^2(E; \scrO_E).
\end{CD}$$
If $X$ is Cohen-Macaulay and Du Bois,  then the map $H^2(\hX; \scrO_{\hX}) \to H^2(E; \scrO_E)$ is an isomorphism, and 
hence $\Pic \hX\cong \Pic E$.  If we just assume that $H^1(\hX; \scrO_{\hX}) =0$, the above diagram still implies that  $\Pic \hX\to \Pic E$ is injective, and hence that $\Pic \hX\otimes_{\Zee}\Cee \to \Pic E\otimes_{\Zee}\Cee$ is injective as well. We have the sequence of homomorphisms 
$$ H^1(\hX;\Omega^1_{\hX} ) \to  H^1(E;\Omega^1_E/\tau^1_E ) \to \bigoplus_i H^1(E_i; \Omega^1_{E_i} ).$$
There is thus a  commutative diagram
$$\begin{CD}
\Pic \hX\otimes_{\Zee}\Cee @>>> \Pic E\otimes_{\Zee}\Cee \\
@V{\delta\otimes 1}VV @VVV\\
H^1(\hX;\Omega^1_{\hX} ) @>>> \bigoplus_i H^1(E_i; \Omega^1_{E_i} )
\end{CD}$$
whose top row is injective. Hence  it suffices to prove that  $\Pic E\otimes_{\Zee}\Cee \to \bigoplus_i H^1(E_i; \Omega^1_{E_i} )$ is injective. 

First we note the following:

\begin{lemma}\label{MVforOstar} There is an exact sequence
$$0 \to H^1(|\Gamma|; \Cee^*) \to \Pic E \to \Ker\Big\{\bigoplus_i\Pic E_i \to  \bigoplus_{\#(I) = 2}\Pic E_I   \Big\}.$$
\end{lemma}
\begin{proof} There is a resolution of $\scrO_E^*$ of the form 
$$\bigoplus_i\scrO_{E_i}^* \to \bigoplus_{\#(I) = 2}\scrO_{E_I}^* \to  \bigoplus_{\#(I) = 3}\scrO_{E_I}^*\to\dots$$
Hence, there is a spectral sequence  $E_1^{p,q} =  \bigoplus_{\#(I) = p+1}H^q(E_I;\scrO_{E_I}^*) \implies H^{p+q}(E;\scrO_E^*)$. (Note however that this spectral sequence does not necessarily degenerate at $E_2$.) There is the natural isomorphism  $E_2^{p,0}\cong H^p(|\Gamma|; \Cee^*)$. The term $E_2^{0,1}$ is just $ \Ker\Big\{\bigoplus_i\Pic E_i \to  \bigoplus_{\#(I) = 2}\Pic E_I   \Big\}$. The   exact sequence
$$0 \to E_2^{1,0} \to H^1(E;\scrO_E^*) \to \Ker d_2|E_2^{0,1}  \to 0$$
then gives the exact sequence of the lemma.
\end{proof}

By Lemma~\ref{MVconseq}, under the assumption that   $H^1(E; \scrO_E) = 0$,   $H^1(|\Gamma|; \Cee) =0$ and hence  $H_1(|\Gamma|; \Zee)$ is a finite abelian group. By the universal coefficient theorem, $$H^1(|\Gamma|; \Cee^*)\cong  \Hom(H_1(|\Gamma|; \Zee), \Cee^*)$$ is also a finite abelian group. 
Hence $H^1(|\Gamma|; \Cee^*) \otimes_{\Zee}\Cee =0$. Then Lemma~\ref{MVforOstar}  implies that the homomorphism $ \Pic E\otimes_{\Zee}\Cee \to  \Big(\Ker\Big\{\bigoplus_i\Pic E_i \to  \bigoplus_{\#(I) = 2}\Pic E_I   \Big\}\Big)\otimes_{\Zee}\Cee $ is injective.

By the second statement in Lemma~\ref{MVconseq}, the differential of the homomorphism  $\psi\colon \bigoplus_i\Pic E_i \to  \bigoplus_{\#(I) = 2}\Pic E_I $ is injective, and hence the restriction of $\psi$ to  $\bigoplus_i\Pic^0 E_i$ has a finite kernel. The kernel of the induced homomorphism 
$$\Ker\Big\{\bigoplus_i\Pic E_i \to  \bigoplus_{\#(I) = 2}\Pic E_I   \Big\} \to \bigoplus_i(\Pic E_i/\Pic^\tau  E_i)$$
is contained in $\bigoplus_i \Pic^\tau  E_i$, where as usual $\Pic^\tau  E_i$ denotes the line bundles on $E_i$ which are numerically equivalent to zero. Since $\Pic^\tau  E_i/\Pic^0  E_i$ is isomorphic to the torsion subgroup of $H^2(E_i;\Zee)$, $\Pic^0  E_i$ has finite index in $ \Pic^\tau  E_i$ for every $i$, and hence  $\bigoplus_i \Pic^0  E_i$ has finite index in $\bigoplus_i \Pic^\tau  E_i$. Thus  the homomorphism $\Ker\Big\{\bigoplus_i\Pic E_i \to  \bigoplus_{\#(I) = 2}\Pic E_I   \Big\} \to \bigoplus_i(\Pic E_i/\Pic^\tau  E_i)$ has a finite kernel. Taking the tensor product with $\Cee$ then gives a sequence of  injections
$$ \Pic E\otimes_{\Zee}\Cee \to  \Big(\Ker\Big\{\bigoplus_i\Pic E_i \to  \bigoplus_{\#(I) = 2}\Pic E_I   \Big\}\Big)\otimes_{\Zee}\Cee   \to \bigoplus_i(\Pic E_i/\Pic^\tau  E_i)\otimes_{\Zee}\Cee.$$

By Hodge theory, $(\Pic E_i/\Pic^\tau  E_i)\otimes_{\Zee}\Cee \to H^1(E_i; \Omega^1_{E_i} )$ is injective for every $i$. Thus $ \Pic E\otimes_{\Zee}\Cee \to  \bigoplus_i H^1(E_i; \Omega^1_{E_i} )$ is injective as well. We have seen that this implies that  $\delta\otimes 1 \colon  \Pic \hX \otimes_{\Zee}\Cee\to H^1(\hX;\Omega^1_{\hX} )$ is injective, completing the proof of Lemma~\ref{namlemma}. 
\end{proof}

\section{Some local deformation theory}\label{section2} 

We keep the notational conventions of Section~\ref{section1}.  Throughout this section, we freely use Schlessinger's isomorphism: if  $\operatorname{depth}_xX \ge 3$, then   $H^0(X; T^1_X) \cong H^1(U; T_U)$ \cite[Lemma 1.17]{FL}. 

\subsection{The main theorem} The goal in this subsection is to analyze  the local deformation theory of $X$  along the lines of  \cite[Theorem 2.1]{FL}.

\begin{theorem}\label{maintheorem}   Let $X$ be a good Stein representative for an isolated Gorenstein $0$-liminal singularity and let $\pi \colon \hX \to X$ be a good equivariant resolution.
\begin{enumerate} 
\item[\rm(i)] There is an exact sequence (arising from the long exact local cohomology sequence)
$$0 \to H^1(\hX; \Omega^{n-1}_{\hX}(\log E)) \to H^1(U; T_U) \to H^2_E( \hX; \Omega^{n-1}_{\hX}(\log E)),$$
and the map $H^1(U; T_U) \to H^2_E( \hX; \Omega^{n-1}_{\hX}(\log E))$ is surjective if $X$ is a local complete intersection or if $H^{n-3}(E; \Omega^1_E/\tau^1_E) = 0$. 
Moreover, in case $X$ is a local complete intersection, 
$$\dim  H^2_E( \hX; \Omega^{n-1}_{\hX}(\log E)) = b^{1,n-2} = s_1.$$
\item[\rm(ii)] There is an exact sequence
 \begin{gather*}
  H^0(E; \Omega^{n-1}_{\hX}(\log E)|E)\to H^1(\hX; \Omega^{n-1}_{\hX}(\log E)(-E)) 
\to H^1(\hX; \Omega^{n-1}_{\hX}(\log E))\to  \\ \to H^1(E; \Omega^{n-1}_{\hX}(\log E)|E)\to 0. 
\end{gather*}
 Moreover, $H^0(E; \Omega^{n-1}_{\hX}(\log E)|E) \cong H^0(E; \Omega^{n-1}_E/\tau ^{n-1}_E)$.  Thus, by Corollary~\ref{cor1.8},  the dimension $\ell^{n-1, 0}$ of $H^0(E; \Omega^{n-1}_{\hX}(\log E)|E) $ is either  $0$ or $1$ and $H^0(E; \Omega^{n-1}_{\hX}(\log E)|E)  = 0$  $\iff$ $H^{n-1}(E_i; \scrO_{E_i})=0$ for every $i$. 
\item[\rm(iii)]    $H^1(\hX; \Omega^{n-1}_{\hX})$  and $H^1(\hX; \Omega^{n-1}_{\hX}(\log E)(-E))$ have the same image in $H^1(U; T_U)$.  More precisely, if $\im H^1(\hX; \Omega^{n-1}_{\hX}(\log E)(-E))$ denotes the image of $H^1(\hX; \Omega^{n-1}_{\hX}(\log E)(-E))$ in $H^1(\hX; \Omega^{n-1}_{\hX})$, there is a canonical isomorphism
$$H^1(\hX; \Omega^{n-1}_{\hX}) \cong \im H^1(\hX; \Omega^{n-1}_{\hX}(\log E)(-E)) \oplus H^1_E(\hX; \Omega^{n-1}_{\hX})$$
such that the map $H^1(\hX; \Omega^{n-1}_{\hX}) \to H^1(U; T_U)$ is the natural map on  $H^1(\hX; \Omega^{n-1}_{\hX}(\log E)(-E))$ and is $0$ on the $H^1_E(\hX; \Omega^{n-1}_{\hX})$ summand. Moreover, the map $H^0(\hX;  \Omega^{n-1}_{\hX}) \to H^0(U;  \Omega^{n-1}_{\hX}|U)$ is an isomorphism.
\item[\rm(iv)] With assumptions as in {\rm(i)},   let $K = \Ker\{H^2_E(\hX; \Omega^{n-1}_{\hX}) \to H^2(\hX; \Omega^{n-1}_{\hX})\}$. Then 
$$K\cong H^2_E(\hX; \Omega^{n-1}_{\hX}(\log E)(-E)),$$ and  there are exact sequences
\begin{gather*}
  H^1(\hX; \Omega^{n-1}_{\hX}(\log E)(-E)) \to H^1(U; T_U) \to K \to 0;\\
0 \to \Gr^{n-1}_FH^n (L) \to K \to H^2_E( \hX; \Omega^{n-1}_{\hX}(\log E)) \to 0.
\end{gather*}
\item[\rm(v)] $\dim   H^0_E(\hX; \Omega^n_{\hX}) = 1$ and the natural map  $H^1_E(\hX; \Omega^n_{\hX}) \to  H^{n+1}_E(\hX)$ is injective. Its  image $\im H^1_E(\hX; \Omega^n_{\hX})$ is equal to $F^nH^{n+1}_E(\hX) \cong \Gr^n_FH^{n+1}_E(\hX)$.  There is an induced map $H^2_E(\hX; \Omega^{n-1}_{\hX}) \to H^{n+1}_E(\hX)/\im H^1_E(\hX; \Omega^n_{\hX})$. 
\item[\rm(vi)]  The image of  $H^2_E(\hX; \Omega^{n-1}_{\hX}) \to H^{n+1}_E(\hX)/\im H^1_E(\hX; \Omega^n_{\hX})$ contains   $\Gr^{n-1}_FH^{n+1}_E(\hX)$.  Let $K' =\Ker\{H^2_E(\hX; \Omega^{n-1}_{\hX}) \to H^{n+1}_E(\hX)/\im H^1_E(\hX; \Omega^n_{\hX})\}$.  Then $K' \subseteq K$. With assumptions as in {\rm(i)},  there is an inclusion 
$$K'\oplus \Gr^{n-1}_FH^n (L) \hookrightarrow K.$$
Hence the induced map $K' \to H^2_E( \hX; \Omega^{n-1}_{\hX}(\log E))$ is injective.
Thus $K'\oplus \Gr^{n-1}_FH^n (L) \cong  K$  $\iff$ $\dim K' = b^{1, n-2}$  $\iff$ the   map $K'\to H^2_E( \hX; \Omega^{n-1}_{\hX}(\log E))$ induced by the surjection $K \to H^2_E( \hX; \Omega^{n-1}_{\hX}(\log E))$ is an isomorphism. Finally, if the image of  $H^2_E(\hX; \Omega^{n-1}_{\hX}) \to H^{n+1}_E(\hX)/\im H^1_E(\hX; \Omega^n_{\hX})$ is equal to  $\Gr^{n-1}_FH^{n+1}_E(\hX)$, then  $$K'\oplus \Gr^{n-1}_FH^n (L) \cong K$$ and hence $K'\cong H^2_E( \hX; \Omega^{n-1}_{\hX}(\log E))$.
\item[\rm(vii)]  If $X$ is a local complete intersection, then the group $H^2_E( \hX; \Omega^{n-1}_{\hX}(\log E))$ is not $0$. Hence, if the map $K' \to H^2_E( \hX; \Omega^{n-1}_{\hX}(\log E))$ is an isomorphism, then $K'\neq 0$. 
\end{enumerate} 
\end{theorem}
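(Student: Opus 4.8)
The plan is to use the explicit dimension count from part (i) to reduce everything to a nonvanishing statement about a single Du Bois invariant. Since $X$ is assumed to be a local complete intersection, part (i) gives
$$\dim H^2_E(\hX; \Omega^{n-1}_{\hX}(\log E)) = b^{1,n-2} = s_1.$$
Thus it suffices to prove that $b^{1,n-2}\neq 0$, and the whole argument is carried by this one inequality.

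To establish $b^{1,n-2}\neq 0$ I would argue by contradiction using the characterization of $1$-Du Bois lci singularities in Proposition~\ref{charmDB}: for an isolated lci singularity of dimension $n\ge 3$, being $1$-Du Bois is equivalent to being $0$-Du Bois together with $b^{1,n-2}=0$. Now $X$ is $0$-liminal, so in particular it is Du Bois, hence $0$-Du Bois; consequently, if $b^{1,n-2}$ were $0$, then $X$ would be $1$-Du Bois. The point is to rule this out. By definition a $0$-liminal singularity is \emph{not} rational, whereas for lci singularities the higher Du Bois/higher rational theory of \cite{FL22d} shows that $1$-Du Bois implies rational (this is the case $k=0$ of the shift-by-one comparison, that for lci singularities $(k+1)$-Du Bois implies $k$-rational). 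Hence $X$ cannot be $1$-Du Bois, and Proposition~\ref{charmDB} forces $b^{1,n-2}\neq 0$. Combined with the displayed identity from part (i), this gives $H^2_E(\hX; \Omega^{n-1}_{\hX}(\log E))\neq 0$, which is the first assertion.

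The final ``hence'' is then immediate and formal: an isomorphism onto a nonzero group must have nonzero source, so if the map $K' \to H^2_E(\hX; \Omega^{n-1}_{\hX}(\log E))$ is an isomorphism, then $K'\neq 0$. I expect the only substantive step to be the justification that a non-rational lci Du Bois singularity fails to be $1$-Du Bois; this is precisely where the lci hypothesis in (vii) enters, since it is for lci singularities that the Du Bois and rational thresholds are known to differ by exactly one. Everything else is bookkeeping built on part (i) and Proposition~\ref{charmDB}. As an alternative to citing the threshold comparison, one could phrase the key step directly on the Milnor fiber: because $s_1 = b^{1,n-2}$, the claim is exactly that $\Gr^1_F H^n(M)\neq 0$, which is the Hodge-theoretic signature of being genuinely $0$-liminal (Du Bois but not rational) rather than $1$-Du Bois, in parallel with the observation in Remark~\ref{infmono} that $s_n=1$ while $s_0=0$.
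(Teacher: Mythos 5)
Your argument for (vii) is correct and is essentially identical to the paper's: both proceed by contradiction, identify the vanishing of $H^2_E(\hX;\Omega^{n-1}_{\hX}(\log E))$ with $b^{1,n-2}=0$ (via duality, which is also where the identity in (i) comes from), invoke Proposition~\ref{charmDB} to conclude $X$ would be $1$-Du Bois, and then use the fact that $1$-Du Bois lci singularities are rational (the paper cites \cite[Corollary 5.5]{FL22d} for this) to contradict the $0$-liminal hypothesis. The concluding formal step that an isomorphism onto a nonzero group has nonzero source is likewise how the paper finishes.
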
 

 \begin{remark}  In \cite{FL}, the key idea is to compare the tangent space $H^0(X; T^1_X)= H^1(U; T_U)$ to deformations of $(X,x)$ to the image of the tangent space to deformations of $\hX$, namely $H^1(\hX; T_{\hX})$.  In the crepant case, $H^1(\hX; T_{\hX})$ is the same as $H^1(\hX; \Omega^{n-1}_{\hX})$. In general $H^1(\hX; T_{\hX})$ and $H^1(\hX; \Omega^{n-1}_{\hX})$ are not independent of the choice of a resolution and must be replaced by birationally invariant spaces such as $H^1(\hX; \Omega^{n-1}_{\hX}(\log E)(-E))$ and $H^1(\hX; \Omega^{n-1}_{\hX}(\log E))$. These spaces are closely connected to the Du Bois and link invariants of $X$ of Definition \ref{defineinv}, and in particular the $1$-Du Bois and $1$-rational conditions of Definition~\ref{defkDB}   come into play.    Passing from rational  to $0$-liminal (i.e.\ Du Bois but not rational) singularities leads to the following differences between Theorem~\ref{maintheorem} above and \cite[Theorem 2.1]{FL}:
\begin{enumerate}
\item[(i)]  In Theorem~\ref{maintheorem}(ii), we do not claim that the map   
$$H^0(E; \Omega^{n-1}_{\hX}(\log E)|E)\to H^1(\hX; \Omega^{n-1}_{\hX}(\log E)(-E)) $$ is injective. However, since $\dim H^0(E; \Omega^{n-1}_{\hX}(\log E)|E)$ is either $0$ or $1$, it is always close to being injective. 
\item[(ii)]  In Theorem~\ref{maintheorem}(v) and (vi), we have to replace $H^{n+1}_E(\hX)$ by $H^{n+1}_E(\hX)/\im H^1_E(\hX; \Omega^n_{\hX})$, where $\dim H^1_E(\hX; \Omega^n_{\hX}) =1$. 
\item[(iii)] The statement in Theorem~\ref{maintheorem}(vii) that $H^2_E( \hX; \Omega^{n-1}_{\hX}(\log E)) \neq 0$ plays the role in this setting of the hypothesis in \cite{FL} that $X$ is $1$-irrational, see Proposition~\ref{prop1.12} below. Of course, in our situation, $X$ is already non-rational, i.e.\ $0$-irrational. 
\end{enumerate}
\end{remark}

\begin{proof}[Proof of Theorem~\ref{maintheorem}]  (i) Identifying $ T_U$ with $\Omega^{n-1}_{\hX}(\log E)|U$, the local cohomology sequence gives an exact sequence
$$H^1_E(\hX; \Omega^{n-1}_{\hX}(\log E)) \to H^1(\hX; \Omega^{n-1}_{\hX}(\log E)) \to H^1(U; T_U) \to H^2_E( \hX; \Omega^{n-1}_{\hX}(\log E)) $$ where the cokernel of the right-hand map is contained in $H^2(\hX; \Omega^{n-1}_{\hX}(\log E))$. 
 By  Lemma~\ref{extravan}, since $X$ is Du Bois, $H^1_E(\hX; \Omega^{n-1}_{\hX}(\log E))  =0$ and   hence the map $H^1(\hX; \Omega^{n-1}_{\hX}(\log E)) \to H^1(U; T_U)$ is injective. By  Proposition~\ref{locinvvan}(i), $H^2(\hX; \Omega^{n-1}_{\hX}(\log E)(-E)) = H^3(\hX; \Omega^{n-1}_{\hX}(\log E)(-E))  =0$ and hence 
$$H^2(\hX; \Omega^{n-1}_{\hX}(\log E)) \cong H^2(E; \Omega^{n-1}_{\hX}(\log E)|E)= \Gr^{n-1}_FH^{n+1}(L).$$   If   $X$ is a local complete intersection, then $\ell^{n-1, 2}=\dim  \Gr^{n-1}_FH^{n+1}(L)=0$ by  Proposition~\ref{locinvvan}(iv),  and therefore  $H^2(\hX; \Omega_{\hX}^{n-1}(\log E)) = 0$.    More generally, 
$$\dim \Gr^{n-1}_FH^{n+1}(L) = \ell^{n-1, 2} = \ell^{1, n-3} = \dim \Gr^1_FH^{n-2}(L)=\dim H^{n-3}(E; \Omega^1_{\hX}(\log E)|E).$$
By Proposition~\ref{locinvvan}(iii), $H^{n-2}(E) \to H^{n-2}(L)$ is surjective, and thus 
$$\Gr^1_FH^{n-2}(E) =H^{n-3}(E; \Omega^1_E/\tau^1_E) \to \Gr^1_FH^{n-2}(L)$$ is surjective. Thus, if either $X$ is a local complete intersection or  $H^{n-3}(E; \Omega^1_E/\tau^1_E) = 0$, then $H^2(\hX; \Omega^{n-1}_{\hX}(\log E)) =0$.  

To see the final statement, note that $H^2_E(\hX; \Omega^{n-1}_{\hX}(\log E))$ is dual to $H^{n-2}(\hX; \Omega^1_{\hX}(\log E)(-E))$ and thus has dimension $b^{1, n-2}$ by definition. By the proof of \cite[Theorem 4.5]{FL22d}, if $X$ is a local complete intersection, since $b^{1,n-1} =0$, 
$$-b^{1,n-2} = b^{1,n-1} - b^{1,n-2} = \ell^{0,n} - \ell^{0,n-1} - (s_1 - s_n).$$
Also, $\ell^{0,n}=0$ for dimension reasons, and for every Du Bois singularity (not necessarily $0$-liminal) 
$$\ell^{0,n-1} = \dim H^{n-1}(E; \scrO_E) = \dim H^{n-1}(\hX; \scrO_{\hX}) = s_n,$$
by \cite[Proposition 2.13]{Steenbrink}. Thus $b^{1,n-2} = s_1$ for an arbitrary isolated local complete intersection Du Bois singularity. (Compare \cite[Theorem 6]{SteenbrinkDB} for the case $n=3$.)

\smallskip
\noindent (ii) We have the link exact sequence 
\begin{gather*}
H^0(E; \Omega^{n-1}_{\hX}(\log E)|E) \to  H^1(\hX; \Omega^{n-1}_{\hX}(\log E)(-E)) \to H^1(\hX; \Omega^{n-1}_{\hX}(\log E)) \\
\to H^1(E; \Omega^{n-1}_{\hX}(\log E)|E) 
\to H^2(\hX; \Omega^{n-1}_{\hX}(\log E)(-E)),
\end{gather*}
 with $H^2(\hX; \Omega^{n-1}_{\hX}(\log E)(-E)) =0$ by Proposition~\ref{locinvvan}(i).
 
 To prove the remaining statements, note that,  by Proposition~\ref{locinvvan}(iii), there is  an exact sequence
 $$0 \to H^{n-1}_E(\hX) \to H^{n-1}(E) \to H^{n-1}(L) \to 0,$$ and hence an exact sequence on the graded pieces of the Hodge filtration
 $$0 \to \Gr_F^{n-1}H^{n-1}_E(\hX) \to H^0(E; \Omega^{n-1}_E/\tau ^{n-1}_E) \to H^0(E; \Omega^{n-1}_{\hX}(\log E)|E)  \to 0.$$
 But   the map 
 $H^0(E; \Omega^{n-1}_E/\tau ^{n-1}_E) \to H^0(E; \Omega^{n-1}_{\hX}(\log E)|E)$ is injective as  the map on sheaves $\Omega^{n-1}_E/\tau ^{n-1}_E \to \Omega^{n-1}_{\hX}(\log E)|E$ is injective by Proposition~\ref{linkexseq}, or directly because $\Gr_F^{n-1}H^{n-1}_E(\hX) = H^{-1}(E; \Omega^{n-1}_{\hX}(\log E)/\Omega^{n-1}_{\hX}) =0$. Hence 
 $$H^0(E; \Omega^{n-1}_E/\tau ^{n-1}_E) \cong H^0(E; \Omega^{n-1}_{\hX}(\log E)|E)$$ and $H^0(E; \Omega^{n-1}_{\hX}(\log E)|E) =0$ $\iff$ $H^0(E; \Omega^{n-1}_E/\tau ^{n-1}_E) =0$. The remaining statements follow from Corollary ~\ref{cor1.8}. 
 
 \smallskip
\noindent (iii) There is a commutative diagram
$$\begin{CD}
   @. H^1_E(\hX; \Omega^{n-1}_{\hX}) @.\\
    @. @VVV @.  \\
   H^1(\hX; \Omega^{n-1}_{\hX}(\log E)(-E)) @>>> H^1(\hX; \Omega^{n-1}_{\hX}) @>>> H^1(E; \Omega^{n-1}_E/\tau^{n-1}_E) @>>>  0
\\
 @VVV @VVV @.\\
  H^1(U; T_U) @>{=}>> H^1(U; T_U). @. 
\end{CD}$$
Lemma~\ref{extravan} implies that  $H^1_E(\hX; \Omega^{n-1}_{\hX}(\log E))=0$, and hence that
$$
H^1_E(\hX; \Omega^{n-1}_{\hX})   \cong H^0_E(\hX; \Omega^{n-1}_{\hX}(\log E)/ \Omega^{n-1}_{\hX}) = H^0(\hX; \Omega^{n-1}_{\hX}(\log E)/ \Omega^{n-1}_{\hX}) = \Gr^{n-1}_FH^n_E(\hX) .$$
Then Proposition~\ref{locinvvan}(iii) gives $H^n_E(\hX) \cong H^n(E)$, and hence 
$$\Gr^{n-1}_FH^n_E(\hX)\cong \Gr^{n-1}_FH^n(E) = H^1(E; \Omega^{n-1}_E/\tau^{n-1}_E).$$
Moreover the isomorphism $H^1_E(\hX; \Omega^{n-1}_{\hX}) \cong H^1(E; \Omega^{n-1}_E/\tau^{n-1}_E)$ is induced by the sequence of maps 
$$H^1_E(\hX; \Omega^{n-1}_{\hX}) \to  H^1(\hX; \Omega^{n-1}_{\hX}) \to H^1(E; \Omega^{n-1}_E/\tau^{n-1}_E).$$ Thus the map $H^1_E(\hX; \Omega^{n-1}_{\hX}) \to H^1(\hX; \Omega^{n-1}_{\hX})$ is injective and its image splits the exact sequence
$$0 \to \im H^1(\hX; \Omega^{n-1}_{\hX}(\log E)(-E))  \to H^1(\hX; \Omega^{n-1}_{\hX}) \to  H^1(E; \Omega^{n-1}_E/\tau^{n-1}_E) \to   0.$$
This proves the first two statements in (iii), and the final sentence follows from the local cohomology sequence
$$0 \to H^0_E(\hX;  \Omega^{n-1}_{\hX}) \to H^0(\hX;  \Omega^{n-1}_{\hX}) \to H^0(U;  \Omega^{n-1}_{\hX}|U)\to H^1_E(\hX; \Omega^{n-1}_{\hX}) \to H^1(\hX; \Omega^{n-1}_{\hX}),$$
the injectivity of the map  $H^1_E(\hX; \Omega^{n-1}_{\hX}) \to H^1(\hX; \Omega^{n-1}_{\hX})$,
and the fact that $H^0_E(\hX;  \Omega^{n-1}_{\hX}) =0$. 

\smallskip
\noindent (iv) The proof is essentially identical to that of \cite[Theorem 2.1(v)]{FL}. 

\smallskip
\noindent (v) $ H^1_E(\hX; \Omega^n_{\hX})$ is dual to $H^{n-1}(\hX; \scrO_{\hX})$, hence has dimension $1$ by Theorem~\ref{Thm1.1}.  The map $H^1_E(\hX; \Omega^n_{\hX})\to H^{n+1}_E(\hX)$ is induced by the long exact sequence of local hypercohomology associated to
\begin{equation}\label{1.00}
0 \to \Omega^n_{\hX}[n] \to \Omega^\bullet_{\hX} \to \Omega^\bullet_{\hX}/\Omega^n_{\hX}[n] \to 0.
\end{equation} 
From the exact sequence
$$0 = H^0_E(\hX;  \Omega^n_{\hX}(\log E)) \to H^0_E(\hX;  \Omega^n_{\hX}(\log E)/\Omega^n_{\hX}) \to H^1_E(\hX; \Omega^n_{\hX}) \to H^1_E(\hX;  \Omega^n_{\hX}(\log E))$$
and the fact that $H^1_E(\hX;  \Omega^n_{\hX}(\log E))$ is dual to $H^{n-1}(\hX; \scrO_{\hX}(-E)) = 0$ by the Du Bois condition, 
$$ H^0_E(\hX;  \Omega^n_{\hX}(\log E)/\Omega^n_{\hX}) \cong H^1_E(\hX; \Omega^n_{\hX}).$$
By Hodge theory (Proposition~\ref{linkexseq}), there is a  spectral sequence with $E_1$ term 
$$E_1^{p,q} = H^q(\hX; \Omega^p_{\hX}(\log E)/\Omega^p_{\hX}) \implies \mathbb{H}^{p+q}(\hX; \Omega^\bullet_{\hX}(\log E)/\Omega^\bullet_{\hX})= H^{p+q+1}_E(\hX),$$
which degenerates at $E_1$, and the corresponding filtration on $H^{p+q+1}_E(\hX)$ is the Hodge filtration. In particular,  the map $H^0(\hX;  \Omega^n_{\hX}(\log E)/\Omega^n_{\hX})  =  H^0_E(\hX;  \Omega^n_{\hX}(\log E)/\Omega^n_{\hX})  \to H^{n+1}_E(\hX)$ is injective, and its image is $\Gr^n_FH^{n+1}_E(\hX)$. Then as in the proof of \cite[Theorem 2.1(vi)]{FL}, the isomorphism $$\mathbb{H}^{k-1}(\Omega^\bullet_{\hX}(\log E)/\Omega^\bullet_{\hX}) = \mathbb{H}^{k-1} _E(\Omega^\bullet_{\hX}(\log E)/\Omega^\bullet_{\hX}) \cong \mathbb{H}^k_E(\Omega^\bullet_{\hX})\cong H^k_E(\hX)$$
 leads to a commutative diagram 
 $$\begin{CD}
H^0(\hX; \Omega^n_{\hX}(\log E)/\Omega^n_{\hX}) @>{\cong}>> H^1_E(\hX; \Omega^n_{\hX}) \\
@VVV @VVV\\
H^{n+1}_E(\hX) @>{=}>> H^{n+1}_E(\hX).
\end{CD}$$ 
Hence the map $H^1_E(\hX; \Omega^n_{\hX})\to H^{n+1}_E(\hX)$ is also injective with image $F^nH^{n+1}_E(\hX) \cong \Gr^n_FH^{n+1}_E(\hX)$, as  $F^{n+1}H^{n+1}_E(\hX) =0$. 

The exact sequence (\ref{1.00}) leads to an exact sequence
$$0 \to H^1_E(\hX; \Omega^n_{\hX})\to H^{n+1}_E(\hX) \to \mathbb{H}^{n+1}_E(\hX; \Omega^\bullet_{\hX}/\Omega^n_{\hX}[n]) \to H^2_E(\hX; \Omega^n_{\hX}).$$
But $H^2_E(\hX; \Omega^n_{\hX})$ is dual to $H^{n-2}(\hX; \scrO_{\hX}) = 0$,   by Lemma~\ref{lemma1.1}. Thus, 
$$H^{n+1}_E(\hX)/\im H^1_E(\hX; \Omega^n_{\hX}) \cong \mathbb{H}^{n+1}_E(\hX; \Omega^\bullet_{\hX}/\Omega^n_{\hX}[n]).$$
The map $H^2_E(\hX; \Omega^{n-1}_{\hX}) \to H^{n+1}_E(\hX)/\im H^1_E(\hX; \Omega^n_{\hX})$ is then just the edge homomorphism.

\smallskip
\noindent (vi) By the Du Bois condition, $H^i_E(\Omega^n_{\hX}(\log E)) = 0$ for $i< n$ since $H^i_E(\Omega^n_{\hX}(\log E))$ is dual to $H^{n-i}(\hX; \scrO_{\hX}(-E))=0$. Hence, the local hypercohomology $\mathbb{H}^i_E$ of the truncated complex $ \Omega^\bullet_{\hX}(\log E)/  \Omega^n_{\hX}(\log E)[n]$ is $0$ for $i < 2n$ and in particular for $i=n, n+1$. Thus, working with the truncated complexes $\Big(\Omega^\bullet_{\hX}(\log E)/\Omega^\bullet_{\hX}\Big)\Big/\Big (\Omega^n_{\hX}(\log E)/\Omega^n_{\hX}\Big)[n]$ and $\Omega^\bullet_{\hX}/\Omega^n_{\hX}[n]$ instead of the full complexes, the long exact hypercohomology sequence shows that
$$\mathbb{H}^n_E\Big(\Big(\Omega^\bullet_{\hX}(\log E)/\Omega^\bullet_{\hX}\Big)\Big/\Big (\Omega^n_{\hX}(\log E)/\Omega^n_{\hX}\Big)[n]\Big) \cong \mathbb{H}^{n+1}_E\Big(\Omega^\bullet_{\hX}/\Omega^n_{\hX}[n]\Big).$$
A straightforward variation of the argument  for the   proof of \cite[Theorem 2.1(vi)]{FL}, shows that  the image of  
$$H^2_E(\hX; \Omega^{n-1}_{\hX}) \to H^{n+1}_E(\hX)/\im H^1_E(\hX; \Omega^n_{\hX})$$ contains $\Gr^{n-1}_FH^{n+1}_E(\hX)$.   More precisely, the argument there shows that the diagram
$$\begin{CD}
H^1(\hX; \Omega^{n-1}_{\hX}(\log E)/\Omega^{n-1}_{\hX}) @>>> H^2_E(\hX; \Omega^{n-1}_{\hX}) \\
@VVV @VV{\beta}V\\
H^{n+1}_E(\hX)/\Gr^n_FH^{n+1}_E(\hX) @>{=}>> H^{n+1}_E(\hX)/\Gr^n_FH^{n+1}_E(\hX)
\end{CD}$$ 
is commutative and that the  inclusion homomorphism $$\Gr_F^{n-1}H^{n+1}_E(\hX) = H^1_E(\hX; \Omega^{n-1}_{\hX}(\log E)/\Omega^{n-1}_{\hX}) \to \im \beta= \im H^2_E(\hX; \Omega^{n-1}_{\hX})$$ is given by the coboundary 
$$\partial\colon H^1_E(\hX; \Omega^{n-1}_{\hX}(\log E)/\Omega^{n-1}_{\hX}) \to H^2_E(\hX; \Omega^{n-1}_{\hX}) $$ followed by the surjection $ H^2_E(\hX; \Omega^{n-1}_{\hX}) \to \im  \beta$.

Next, taking the usual hypercohomology of the 
exact sequence (\ref{1.00}) and using the Grauert-Riemenschneider theorem that $H^i(\hX; K_{\hX}) = 0$ for $i>0$ gives: for $i>n$, 
$$ H^i(\hX) = \mathbb{H}^i(\hX; \Omega^\bullet_{\hX}) \cong \mathbb{H}^i(\hX; \Omega^\bullet_{\hX}/\Omega^n_{\hX}[n]).$$
In particular, there is an induced map $H^2(\hX;  \Omega^{n-1}_{\hX}) \to H^{n+1}(\hX)$. Via the natural map  $H^{n+1}_E(\hX) \to H^{n+1}(\hX)$, the image of $ H^1_E(\hX; \Omega^n_{\hX})$ is sent to the image of  $H^1(\hX; \Omega^n_{\hX}) =0$. Thus there is a commutative diagram:
$$\begin{CD}
H^2_E(\hX; \Omega^{n-1}_{\hX}) @>>> H^2(\hX; \Omega^{n-1}_{\hX}) \\
@VVV @VVV\\
H^{n+1}_E(\hX)/ \im H^1_E(\hX; \Omega^n_{\hX}) @> >> H^{n+1}(\hX).
\end{CD}$$
The argument of \cite[Lemma 2.3(i)]{FL} then shows that the right hand vertical map in the above diagram is injective, and hence that
\begin{align*}
K' &=\Ker\{H^2_E(\hX; \Omega^{n-1}_{\hX}) \to H^{n+1}_E(\hX) /\im H^1_E(\hX; \Omega^n_{\hX})\}\\
&=\Ker\{H^2_E(\hX; \Omega^{n-1}_{\hX}) \to H^{n+1}_E(\hX) /\Gr^n_F H^{n+1}_E(\hX)\} 
\end{align*}
  is contained in  $\Ker\{ H^2_E(\hX; \Omega^{n-1}_{\hX}) \to H^2(\hX; \Omega^{n-1}_{\hX})\} = K$. 

Next we claim that $K'\cap \Gr_F^{n-1}H^n(L) =  0$. There is a  commutative diagram 
$$\begin{CD}
\Gr_F^{n-1}H^n(L) = H^1(\Omega^{n-1}_{\hX}(\log E)|E) @>>> K \\
@VVV @VVV\\
H^n(L)/\Gr^n_FH^n(L) @>>> H^{n+1}_E(\hX)/\Gr^n_F H^{n+1}_E(\hX).
\end{CD}$$
By the $E_1$ degeneration of the Hodge spectral sequence for $H^n(L)=\mathbb{H}^n(E; \Omega^{\bullet}_{\hX}(\log E)|E)$, the left vertical arrow is injective. By Proposition~\ref{locinvvan}(iii)  and strictness, the lower horizontal map  is injective. Hence  the composed map $\Gr_F^{n-1}H^n(L) \to  H^{n+1}_E(\hX)/\Gr^n_F H^{n+1}_E(\hX)$ is injective. It follows that if   $\alpha \in K'\cap \Gr_F^{n-1}H^n(L)$, then $\alpha$ maps to $0$ in $H^{n+1}_E(\hX)/\im H^1_E(\hX; \Omega^n_{\hX})= H^{n+1}_E(\hX)/\Gr^n_F H^{n+1}_E(\hX)$ by the definition of $K'$, and therefore $\alpha =0$. In particular, the map $K'\oplus \Gr^{n-1}_FH^n (L) \to K$ is an inclusion, and the induced map $K' \to H^2_E( \hX; \Omega^{n-1}_{\hX}(\log E))$ is injective. Clearly $K'\oplus \Gr^{n-1}_FH^n (L) \to K$ is an isomorphism $\iff$ the  induced  map $K'\to H^2_E( \hX; \Omega^{n-1}_{\hX}(\log E))$ is an isomorphism  $\iff$ $\dim K' = \dim H^2_E( \hX; \Omega^{n-1}_{\hX}(\log E)) = b^{1, n-2}$.

Finally,  suppose that  the image of $\beta \colon H^2_E(\hX; \Omega^{n-1}_{\hX})\to H^{n+1}_E(\hX)/\im H^1_E(\hX; \Omega^n_{\hX})$ is equal to $\Gr_F^{n-1}H^{n+1}_E(\hX)$. Then  the discussion at the beginning of the proof of (vi) shows that
\begin{align*}
\Gr_F^{n-1}H^{n+1}_E(\hX) &= H^1(\hX; \Omega^{n-1}_{\hX}(\log E)/\Omega^{n-1}_{\hX}) = H^1_E(\hX; \Omega^{n-1}_{\hX}(\log E)/\Omega^{n-1}_{\hX}) \\
&\xrightarrow{\partial}  H^2_E(\hX; \Omega^{n-1}_{\hX}) \xrightarrow{\beta}  \im \beta = \Gr_F^{n-1}H^{n+1}_E(\hX)
\end{align*}
is the identity, so that $\partial$
splits the surjection $H^2_E(\hX; \Omega^{n-1}_{\hX}) \to \Gr_F^{n-1}H^{n+1}_E(\hX)$. Thus  
$$H^2_E(\hX; \Omega^{n-1}_{\hX}) = K'\oplus  \Gr_F^{n-1}H^{n+1}_E(\hX).$$ Then   $K = K'\oplus K''$, where $K''$ is the kernel of 
$$\Gr_F^{n-1}H^{n+1}_E(\hX) \to H^2(\hX; \Omega^{n-1}_{\hX}) \cong H^2(E; \Omega^{n-1}_E/\tau^{n-1}_E) = \Gr_F^{n-1}H^{n+1}(E).$$
Equivalently,  $K''$ is the kernel of $\Gr_F^{n-1}H^{n+1}_E(\hX) \to \Gr_F^{n-1}H^{n+1}(E)$
which by Proposition~\ref{locinvvan}(iii)  is exactly $\Gr_F^{n-1}H^n(L) = H^1(\hX;\Omega^{n-1}_{\hX}(\log E)|E)$.  Thus, under the assumption that  
$$\im \left\{H^2_E(\hX; \Omega^{n-1}_{\hX})\to   H^{n+1}_E(\hX)/\im H^1_E(\hX; \Omega^n_{\hX}) \right\}=\Gr_F^{n-1}H^{n+1}_E(\hX),$$  
we have $K \cong \Gr_F^{n-1}H^n(L) \oplus K'$. It follows that    the surjection $K \to H^2_E( \hX; \Omega^{n-1}_{\hX}(\log E))$, whose kernel is $\Gr_F^{n-1}H^n(L)$,   identifies $K'$ with $H^2_E( \hX; \Omega^{n-1}_{\hX}(\log E))$. 

 \smallskip
\noindent (vii) 
Note that $H^2_E( \hX; \Omega^{n-1}_{\hX}(\log E))$ is dual to $H^{n-2}(\hX; \Omega^1_{\hX}(\log E)(-E))$.   Assume by contradiction that $H^2_E( \hX; \Omega^{n-1}_{\hX}(\log E))=0$. Then by duality $b^{1,n-2} =0$. Since $X$ is $0$-Du Bois and   $X$ is a local complete intersection by assumption,    $X$ is $1$-Du Bois   by Proposition ~\ref{charmDB}. In this case, $X$ is rational by \cite[Corollary 5.5]{FL22d}. But this contradicts the $0$-liminal assumption. 
\end{proof}

  \subsection{Comments} In this subsection, we will make some comments on Theorem~\ref{maintheorem}. First we give some numerical consequences along the lines of those in \cite[Theorem 2.1(ii),(iv)]{FL} and using \cite[Corollary 1.8]{FL}. Since the arguments are very similar to those of  \cite{FL}, we omit the proof. 
  
  \begin{proposition} Under the assumptions of Theorem~\ref{maintheorem}, suppose either that $X$ is a local complete intersection or that $H^{n-3}(E; \Omega^1_E/\tau^1_E) = 0$. Then
   \begin{align*}
   \dim H^0(X; T^1_X) &= b^{1, n-2} + b^{n-1, 1} + \ell^{n-1, 1} - \ell^{n-1,0}\\
   &= b^{2, n-2} + a+ b^{n-1, 1} + \ell^{n-1, 1} - \ell^{n-1,0},
   \end{align*}
   where $a = \dim \Ker\{d: H^3_E(\hX; \Omega^{n-3}_{\hX}(\log E)) \to H^3_E(\hX; \Omega^{n-2}_{\hX}(\log E))\} $ and $\ell^{n-1,0} =0$ or $1$. \qed
   \end{proposition}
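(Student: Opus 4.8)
The plan is to reduce everything to the exact sequences of Theorem~\ref{maintheorem} and count dimensions, in exact parallel to \cite[Theorem 2.1(ii),(iv)]{FL}. First I would invoke Schlessinger's isomorphism $H^0(X;T^1_X)\cong H^1(U;T_U)$ and then Theorem~\ref{maintheorem}(i): under either stated hypothesis the map $H^1(U;T_U)\to H^2_E(\hX;\Omega^{n-1}_{\hX}(\log E))$ is surjective with kernel $H^1(\hX;\Omega^{n-1}_{\hX}(\log E))$, and $\dim H^2_E(\hX;\Omega^{n-1}_{\hX}(\log E))=b^{1,n-2}$, so that
$$\dim H^0(X;T^1_X)=\dim H^1(\hX;\Omega^{n-1}_{\hX}(\log E))+b^{1,n-2}.$$
It then remains to evaluate $\dim H^1(\hX;\Omega^{n-1}_{\hX}(\log E))$.

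For this I would take dimensions in the four-term exact sequence of Theorem~\ref{maintheorem}(ii), using the definitional identifications $\dim H^1(\hX;\Omega^{n-1}_{\hX}(\log E)(-E))=b^{n-1,1}$ and $\dim H^1(E;\Omega^{n-1}_{\hX}(\log E)|E)=\ell^{n-1,1}$, together with the computation (Theorem~\ref{maintheorem}(ii) and Corollary~\ref{cor1.8}) that $\dim H^0(E;\Omega^{n-1}_{\hX}(\log E)|E)=\ell^{n-1,0}\in\{0,1\}$. Writing the sequence as $H^0(E;\Omega^{n-1}_{\hX}(\log E)|E)\xrightarrow{\delta}H^1(\hX;\Omega^{n-1}_{\hX}(\log E)(-E))\to H^1(\hX;\Omega^{n-1}_{\hX}(\log E))\to H^1(E;\Omega^{n-1}_{\hX}(\log E)|E)\to 0$ and taking the alternating sum gives
$$\dim H^1(\hX;\Omega^{n-1}_{\hX}(\log E))=b^{n-1,1}+\ell^{n-1,1}-\ell^{n-1,0}+\dim\Ker\delta,$$
so that the first displayed equality is exactly the assertion that $\delta$ is injective, equivalently that the restriction $H^0(\hX;\Omega^{n-1}_{\hX}(\log E))\to H^0(E;\Omega^{n-1}_{\hX}(\log E)|E)$ vanishes.

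The hard part will be precisely this behaviour of the connecting map $\delta$, which is where the $-\ell^{n-1,0}$ term is produced and is the genuinely new feature relative to the rational case of \cite{FL}, where $\ell^{n-1,0}=0$ and the point is vacuous. When $\ell^{n-1,0}=1$ one must decide whether the (at most one-dimensional) space of global log $(n-1)$-forms restricts to zero in $\Gr^{n-1}_FH^{n-1}(L)$. I would analyze this through the Hodge-theoretic descriptions $H^0(\hX;\Omega^{n-1}_{\hX}(\log E))=\Gr^{n-1}_FH^{n-1}(U)$ and $H^0(E;\Omega^{n-1}_{\hX}(\log E)|E)=\Gr^{n-1}_FH^{n-1}(L)$, comparing the two via the long exact local-cohomology sequence and using the semipurity of Proposition~\ref{locinvvan}(iii) together with the extra vanishing of Lemma~\ref{extravan} to locate the image of the restriction. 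This is the delicate step: the contribution of $\ell^{n-1,0}$ depends entirely on whether the holomorphic $(n-1)$-form carried by the relevant component $E_i$ of Corollary~\ref{cor1.8} extends to a global log form on $\hX$, and settling this is where the real care lies.

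For the second equality I would re-express $b^{1,n-2}=\dim H^2_E(\hX;\Omega^{n-1}_{\hX}(\log E))$ by running the de Rham differential in the local-cohomology spectral sequence $E_1^{p,q}=H^q_E(\hX;\Omega^p_{\hX}(\log E))\Rightarrow H^{p+q}_E(\hX)$, exactly as in the proof of \cite[Theorem 2.1(ii),(iv)]{FL}. Tracking the terms of total degree $n+1$ and invoking \cite[Corollary 1.8]{FL} to control the neighbouring groups, $b^{1,n-2}$ splits as the contribution $b^{2,n-2}$ coming from the $\Omega^{n-2}_{\hX}(\log E)$-level together with the correction $a=\dim\Ker\{d\colon H^3_E(\hX;\Omega^{n-3}_{\hX}(\log E))\to H^3_E(\hX;\Omega^{n-2}_{\hX}(\log E))\}$ measuring the failure of the relevant differential to be injective, i.e.\ $b^{1,n-2}=b^{2,n-2}+a$; substituting this into the first equality yields the second. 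These manipulations are formally identical to those of \cite{FL}, so the only work there is bookkeeping the Hodge filtration, and the substantive obstacle remains the control of $\delta$ described above.
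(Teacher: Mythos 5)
Your reduction is exactly the intended one (the paper omits the proof, deferring to \cite[Theorem 2.1(ii),(iv)]{FL}): Theorem~\ref{maintheorem}(i) gives $\dim H^0(X;T^1_X)=\dim H^1(\hX;\Omega^{n-1}_{\hX}(\log E))+b^{1,n-2}$ under either hypothesis, and the four-term sequence of Theorem~\ref{maintheorem}(ii) then reduces the first displayed equality to the injectivity of the connecting map $\delta\colon H^0(E;\Omega^{n-1}_{\hX}(\log E)|E)\to H^1(\hX;\Omega^{n-1}_{\hX}(\log E)(-E))$, equivalently to the vanishing of the restriction $H^0(\hX;\Omega^{n-1}_{\hX}(\log E))\to H^0(E;\Omega^{n-1}_{\hX}(\log E)|E)$. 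You correctly isolate this as the crux, but you never establish it, and that is a genuine gap rather than a routine omission: the remark following Theorem~\ref{maintheorem} explicitly declines to claim that $\delta$ is injective, and in the most basic examples it is not. Take the cone over a quartic $K3$ surface $E$ ($n=3$, $f=\sum_{i=1}^4 z_i^4$), so that $\hX$ is the total space of $\scrO_E(-1)$ with zero section $E$. Then $p^*\omega_E$ is a global holomorphic $2$-form on $\hX$ restricting to $\omega_E\neq 0$ in $\Omega^2_E\subseteq \Omega^2_{\hX}(\log E)|E$, so the restriction map is surjective onto the one-dimensional target, $\delta=0$, and $\Ker\delta\neq 0$ even though $\ell^{2,0}=1$. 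A direct check in this example (via Proposition~\ref{wtdcase}, or via $p_*$ and the filtration of $\Omega^2_{\hX}(\log E)$ by pullbacks) gives $b^{1,1}=s_1=31$, $b^{2,1}=\sum_{k\ge 1}h^1(E;\Omega^1_E(k))=16+10+4+1=31$, $\ell^{2,1}=19$, while $\dim H^0(X;T^1_X)=\mu=81=31+31+19$; the stated right-hand side $31+31+19-1=80$ undercounts by exactly $\ell^{n-1,0}$.

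So what your bookkeeping actually proves is $\dim H^0(X;T^1_X)=b^{1,n-2}+b^{n-1,1}+\ell^{n-1,1}-\dim\im\delta$ with $\dim\im\delta\in\{0,\ell^{n-1,0}\}$, and replacing $\dim\im\delta$ by $\ell^{n-1,0}$ requires showing that the canonical form on the distinguished component $E_i$ of Corollary~\ref{cor1.8} does \emph{not} extend to a log $(n-1)$-form on $\hX$ — which is false whenever it does extend, as in the cone case. The step you flag as ``delicate'' therefore cannot be completed in general; either the formula needs the correction term $\dim\im\delta$, or an additional hypothesis forcing injectivity of $\delta$. The second equality, $b^{1,n-2}=b^{2,n-2}+a$ via the local-cohomology Hodge spectral sequence and \cite[Corollary 1.8]{FL}, is independent of this issue and your sketch follows the cited route, though it too is only outlined.
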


  Next, for future use, we elaborate on the meaning of the conditions of Theorem~\ref{maintheorem}(vi), (vii):
  
  \begin{proposition}\label{prop1.12} Let $X$ be  a $0$-liminal hypersurface singularity. Suppose that the map $K' \to  H^2_E( \hX; \Omega^{n-1}_{\hX}(\log E))$ is an isomorphism. Then 
 the kernel of the map $H^0(X; T^1_X) \to K'$  is  contained in  $\mathfrak{m}_xH^0(X; T^1_X)$. 
  \end{proposition}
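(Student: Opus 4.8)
The plan is to identify the map $H^0(X; T^1_X) \to K'$ with an $\scrO_X$-linear map having nonzero image, and then to exploit the fact that, for a hypersurface singularity, $H^0(X; T^1_X)$ is a \emph{cyclic} $\scrO_X$-module. Granting these two points the conclusion is essentially formal.

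First I would make the map explicit. Using Schlessinger's isomorphism $H^0(X; T^1_X) \cong H^1(U; T_U)$ and the identification $T_U \cong \Omega^{n-1}_{\hX}(\log E)|U$, the map in question is the composite of the surjection $H^1(U; T_U) \twoheadrightarrow K$ from Theorem~\ref{maintheorem}(iv) with the projection $K \to K'$ coming, under the present hypothesis, from the splitting $K \cong K' \oplus \Gr^{n-1}_FH^n(L)$ of Theorem~\ref{maintheorem}(vi). Since the surjection $K \to H^2_E(\hX; \Omega^{n-1}_{\hX}(\log E))$ in the second sequence of (iv) has kernel exactly $\Gr^{n-1}_FH^n(L)$, the isomorphism $K' \cong H^2_E(\hX; \Omega^{n-1}_{\hX}(\log E))$ identifies this projection with that surjection. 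Hence the map $H^0(X; T^1_X) \to K'$ is identified with the map $\phi\colon H^1(U; T_U) \to H^2_E(\hX; \Omega^{n-1}_{\hX}(\log E))$ of Theorem~\ref{maintheorem}(i), whose kernel is the image of $H^1(\hX; \Omega^{n-1}_{\hX}(\log E))$. This $\phi$ is a connecting homomorphism in the long exact local cohomology sequence of $\Omega^{n-1}_{\hX}(\log E)$ with supports along $E$; since that sequence and Schlessinger's isomorphism consist of $\scrO_X$-module homomorphisms, $\phi$ is $\scrO_X$-linear. Verifying this linearity and the identification carefully is the step I expect to require the most attention, though it is not deep.

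Next I would record the two remaining inputs. Because $X$ is a hypersurface with isolated singularity, $H^0(X; T^1_X)$ is the Tjurina algebra $\scrO_X/J$, a cyclic $\scrO_X$-module generated by the class $g_0$ of $1$. Because a hypersurface is a local complete intersection, Theorem~\ref{maintheorem}(i) gives that $\phi$ is surjective, while the standing hypothesis together with Theorem~\ref{maintheorem}(vii) gives $K' \cong H^2_E(\hX; \Omega^{n-1}_{\hX}(\log E)) \neq 0$. Therefore $\phi \neq 0$, and since $g_0$ generates $H^0(X; T^1_X)$ we have $\phi(g_0) \neq 0$.

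Finally I would run the cyclic-module argument. Set $w = \phi(g_0)$ and let $I' = \{a \in \scrO_X : a w = 0\}$ be its annihilator; note $J \subseteq I'$. Precomposing $\phi$ with the quotient $q\colon \scrO_X \to \scrO_X/J = H^0(X; T^1_X)$ and using $\scrO_X$-linearity, the composite sends $a \mapsto a w$, so its kernel is $I'$ and hence $\Ker \phi = q(I') = I'/J$. Since $w \neq 0$ we have $1 \notin I'$, so $I'$ is a proper ideal and $I' \subseteq \mathfrak{m}_x$. Consequently
$$\Ker \phi = I'/J \subseteq \mathfrak{m}_x/J = \mathfrak{m}_x H^0(X; T^1_X),$$
as claimed. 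The hypersurface hypothesis enters only through the cyclicity of $T^1_X$: without it $H^0(X; T^1_X)$ need not be generated by a single element, and the kernel of $\phi$ could then meet the complement of $\mathfrak{m}_x H^0(X; T^1_X)$.
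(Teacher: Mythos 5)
Your proposal is correct and follows essentially the same route as the paper: both identify the map $H^0(X;T^1_X)\to K'$ with the surjection $H^1(U;T_U)\to H^2_E(\hX;\Omega^{n-1}_{\hX}(\log E))$ from Theorem~\ref{maintheorem}(i), invoke part (vii) to get $K'\neq 0$, and then use cyclicity of $T^1$ for a hypersurface. The paper phrases the last step as a contradiction (a kernel not contained in $\mathfrak{m}_xH^0(X;T^1_X)$ would contain a generator and hence be everything, forcing $K'=0$), while you argue directly via the annihilator ideal of $\phi(g_0)$; the two are equivalent, and your explicit attention to $\scrO_X$-linearity is a point the paper leaves implicit.
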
  
  \begin{proof}   First, by Theorem~\ref{maintheorem}(vii), $K'\neq 0$. 
  By Theorem~\ref{maintheorem}(i), there is a commutative diagram whose top row is exact:   
  $$\begin{CD} 
  0  @>>>  H^1( \hX; \Omega^{n-1}_{\hX}(\log E))   @>>>   H^1(U; T_U)  @>>>   H^2_E( \hX; \Omega^{n-1}_{\hX}(\log E))  @>>>  0\\
  @. @. @| @| @. \\
  @. @.    H^0(X; T^1_X)   @>>>   K' . @.
  \end{CD}$$
  Thus   the kernel of $H^0(X; T^1_X) \to K'$  is identified with $H^1( \hX; \Omega^{n-1}_{\hX}(\log E))$. 
  Assume  by contradiction that the image of $H^1( \hX; \Omega^{n-1}_{\hX}(\log E)) $ in $ H^0(X; T^1_X)$  is  not contained in  $\mathfrak{m}_xH^0(X; T^1_X)$.  Then it contains a generator of $H^0(X; T^1_X)$.  Hence $H^1( \hX; \Omega^{n-1}_{\hX}(\log E)) =H^0(X; T^1_X)$,  contradicting  the fact that $K'\neq 0$. 
  \end{proof}

 \subsection{Deformations of the resolution}  Given the  resolution $\pi\colon \hX \to X$, there is the natural map $H^1(\hX; T_{\hX}) \to H^1(U, T_U)$. Since $n =\dim X \ge 3$, $H^1(\hX; \scrO_{\hX}) = R^1\pi_*\scrO_{\hX}=0$, and thus by Wahl's theory \cite[Theorem 1.4(c)]{Wahl} there is a corresponding map of deformation functors $\mathbf{Def}_{\hX} \to \mathbf{Def}_X$. Note that the hypothesis $\dim X \ge 3$ is essential here. For example, if $X$ is   a smoothable simple elliptic singularity  and $\hX$ is the minimal resolution, then the exceptional divisor $E$ will disappear in a general small deformation $\widehat{\mathcal{X}}\to \Delta$ of $\hX$. If   there were a   morphism $\widehat{\mathcal{X}} \to \mathcal{X}$ for some deformation $\mathcal{X}$ of $X$, then $\widehat{\mathcal{X}}$   would be a small resolution of the Cohen-Macaulay singularity on $\mathcal{X}$ corresponding to the singular point of  $X$.  But  a Leray spectral sequence argument shows that $H^1(E; \scrO_E) =0$ (cf.\ \cite[Proposition 1]{Pinkham81}), a contradiction. 
  
  In general, the space $H^1(\hX; T_{\hX})$ and its image in $H^1(U, T_U)$ need not have birational meaning (i.e.\ need not be independent of the resolution). The situation is better in case there is a good anticanonical resolution:
  
  \begin{proposition}\label{Prop2.3} Suppose that $\pi\colon \hX \to X$ is a good anticanonical resolution.
  \begin{enumerate}
  \item[\rm(i)] $T_{\hX}(-E) \cong \Omega^{n-1}_{\hX}$ and $T_{\hX}(-\log E) \cong \Omega^{n-1}_{\hX}(\log E)$.
  \item[\rm(ii)] The natural map $H^1(\hX; T_{\hX}(-\log E)) \to H^1(\hX; T_{\hX})$ is injective, and it is an isomorphism if $H^1(E_i; N_{E_i/\hX}) =0$ for every $i$,  i.e.\ if the $E_i$ are stable submanifolds.
  \item[\rm(iii)]  If $H^1(E; \scrO_E(E)) =0$, then every first order deformation of $\hX$ preserves the divisor $E$ (but not necessarily the divisors $E_i$). 
  \item[\rm(iv)] We have
  $$H^2(\hX;T_{\hX}(-\log E))  \cong \Gr^{n-1}_FH^{n+1}(L).$$ Suppose either that  $X$ is a local complete intersection or that $E$ is smooth and $h^{n-2,2}(E)  =0$.  Then $H^2(\hX;T_{\hX}(-\log E)) = 0$, i.e.\ the functor of deformations of $\hX$ keeping the divisors $E_i$ is unobstructed. 
  \end{enumerate}
  \end{proposition}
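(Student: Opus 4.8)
The plan is to treat the four parts in order, using (i) as the engine that converts statements about the logarithmic tangent sheaf into statements about $\Omega^{n-1}_{\hX}(\log E)$, where Theorem~\ref{maintheorem} and Proposition~\ref{locinvvan} apply. For (i) I would argue by duality on the smooth variety $\hX$: the perfect wedge pairing $\Omega^{n-1}_{\hX}\otimes T_{\hX}\to \Omega^n_{\hX}=K_{\hX}$ gives $\Omega^{n-1}_{\hX}\cong T_{\hX}\otimes K_{\hX}=T_{\hX}(-E)$ since $K_{\hX}=\scrO_{\hX}(-E)$; and the perfect pairing $\Omega^{n-1}_{\hX}(\log E)\otimes \Omega^1_{\hX}(\log E)\to \Omega^n_{\hX}(\log E)=\scrO_{\hX}$ (using $K_{\hX}\otimes\scrO_{\hX}(E)=\scrO_{\hX}$), together with the defining identity $\Omega^1_{\hX}(\log E)^{\vee}=T_{\hX}(-\log E)$, gives $\Omega^{n-1}_{\hX}(\log E)\cong T_{\hX}(-\log E)$.

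For (ii) the injectivity comes for free from (i). Both $T_{\hX}(-\log E)$ and $T_{\hX}$ restrict to $T_U$ on $U=\hX-E$, so the restriction map $H^1(\hX;T_{\hX}(-\log E))\to H^1(U;T_U)$ factors through $H^1(\hX;T_{\hX})$. Under (i) this restriction is precisely the injection $H^1(\hX;\Omega^{n-1}_{\hX}(\log E))\hookrightarrow H^1(U;T_U)$ of Theorem~\ref{maintheorem}(i), so its first leg $H^1(\hX;T_{\hX}(-\log E))\to H^1(\hX;T_{\hX})$ is injective. For the isomorphism criterion I would use the normal-bundle sequence $0\to T_{\hX}(-\log E)\to T_{\hX}\to \bigoplus_i (\iota_{E_i})_*N_{E_i/\hX}\to 0$ (whose surjectivity on the right is a local computation at the normal crossings of $E$); the hypothesis $H^1(E_i;N_{E_i/\hX})=0$ makes the long exact sequence force surjectivity of $H^1(T_{\hX}(-\log E))\to H^1(T_{\hX})$, which with the injectivity just proved yields the isomorphism.

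For (iii) I would pass to the whole divisor $E$ rather than its components. A first-order deformation of $\hX$ preserves $E$ if and only if its Kodaira--Spencer class lifts along the forgetful map from deformations of the pair $(\hX,E)$, and the obstruction to such a lift is the image in $H^1(E;N_{E/\hX})=H^1(E;\scrO_E(E))$; hence $H^1(E;\scrO_E(E))=0$ forces every deformation of $\hX$ to preserve $E$. Because the relevant sheaf of vector fields is the one preserving the ideal $\mathcal{I}_E$, which is a priori larger than $T_{\hX}(-\log E)$, the divisor $E$ is preserved as a whole while the individual $E_i$ need not be. The only technical point is that $T_{\hX}\to N_{E/\hX}$ fails to be surjective along $E_{\mathrm{sing}}$, which I would absorb into the standard normal-sheaf formalism for deformations of a Cartier divisor.

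Finally, for (iv), part (i) identifies $H^2(\hX;T_{\hX}(-\log E))$ with $H^2(\hX;\Omega^{n-1}_{\hX}(\log E))$. Exactly as in the proof of Theorem~\ref{maintheorem}(i), Proposition~\ref{locinvvan}(i) kills $H^2$ and $H^3$ of $\Omega^{n-1}_{\hX}(\log E)(-E)$, so $H^2(\hX;\Omega^{n-1}_{\hX}(\log E))\cong H^2(E;\Omega^{n-1}_{\hX}(\log E)|E)=\Gr^{n-1}_FH^{n+1}(L)=\ell^{n-1,2}$. In the lci case this vanishes immediately by Proposition~\ref{locinvvan}(iv), since the bidegree $(n-1,2)$ lies outside the allowed range. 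For smooth $E$ the residue sequence $0\to \Omega^{n-1}_E\to \Omega^{n-1}_{\hX}(\log E)|E\to \Omega^{n-2}_E\to 0$ sandwiches $H^2(\Omega^{n-1}_{\hX}(\log E)|E)$ between $H^2(\Omega^{n-1}_E)=H^2(E;\scrO_E)$ (using $\omega_E\cong\scrO_E$) and $H^2(\Omega^{n-2}_E)=h^{n-2,2}(E)=0$, leaving a quotient of $H^2(E;K_E)$ by the image of cup product with $c_1(N_{E/\hX})$. I expect this smooth-$E$ Hodge computation --- showing that $h^{n-2,2}(E)=0$ together with nondegeneracy of $\cup\,c_1(N_{E/\hX})$ forces $\Gr^{n-1}_FH^{n+1}(L)=0$ --- to be the main obstacle, since it is the one place where one must control a connecting map rather than merely invoke a vanishing theorem.
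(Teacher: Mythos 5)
Parts (i)--(iii) of your proposal are correct. For (i) you use the same two perfect pairings as the paper. For (ii) your injectivity argument (factoring the restriction to $U$ through $H^1(\hX;T_{\hX})$ and invoking the injectivity from Theorem~\ref{maintheorem}(i), which rests on the extra vanishing lemma) is a valid alternative to the paper's more elementary observation that $\bigoplus_i H^0(E_i;N_{E_i/\hX})=0$ because $E$ is exceptional; the surjectivity step via the normal-bundle sequence is identical. For (iii), your ``standard normal-sheaf formalism'' is exactly the two-term complex $\mathcal{D}^\bullet=[T_{\hX}\to N_{E/\hX}]$ that the paper writes down, and the needed input is indeed only $H^1(E;N_{E/\hX})=0$. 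The lci case of (iv) also matches the paper: $\ell^{n-1,2}=0$ by Proposition~\ref{locinvvan}(iv).

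The smooth-$E$ case of (iv) is where your proposal has a genuine gap, and you have correctly located it but not closed it. Your restriction sequence $0\to\Omega^{n-1}_E\to\Omega^{n-1}_{\hX}(\log E)|E\to\Omega^{n-2}_E\to 0$ together with $h^{n-2,2}(E)=0$ only identifies $\Gr^{n-1}_FH^{n+1}(L)$ with the cokernel of the connecting map $H^1(E;\Omega^{n-2}_E)\to H^2(E;\omega_E)\cong H^2(E;\scrO_E)$, i.e.\ cup product with $[E]|E$. This target is Serre dual to $H^{n-3}(E;\scrO_E)$ and need not vanish (already for $n=3$ with $E$ a $K3$ it is one-dimensional), so you must prove surjectivity of that cup product. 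The hypotheses give you no ampleness or hard Lefschetz statement for $[E]|E$ (only that $E$ is exceptional), so ``nondegeneracy of $\cup\,c_1(N_{E/\hX})$'' is not available off the shelf; this is a real missing step, not a routine verification. The paper sidesteps the connecting map entirely by using semipurity: by Proposition~\ref{locinvvan}(iii) the map $H^{n+1}(L)\to H^{n+2}_E(\hX)$ is injective (here $n+2>n$), and by strictness so is $\Gr^{n-1}_FH^{n+1}(L)\to\Gr^{n-1}_FH^{n+2}_E(\hX)\cong H^2(\hX;\Omega^{n-1}_{\hX}(\log E)/\Omega^{n-1}_{\hX})\cong H^2(E;\Omega^{n-2}_E)$, which vanishes by hypothesis. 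Replacing your connecting-map analysis with this injection repairs the argument; as a byproduct it shows a posteriori that your cup-product map is indeed surjective under these hypotheses, but that surjectivity is a consequence of semipurity rather than an input you can verify directly.
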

  \begin{proof}  (i) First,  $T_{\hX}(-E) \cong T_{\hX}\otimes \Omega^n_{\hX} \cong \Omega^{n-1}_{\hX}$. Moreover, the pairing 
  $$\Omega^1_{\hX}(\log E) \otimes \Omega^{n-1}_{\hX}(\log E ) \to \Omega^n_{\hX}(\log E ) \cong\scrO_{\hX}$$
  is perfect and thus identifies $\Omega^1_{\hX}(\log E)\spcheck = T_{\hX}(-\log E)$ with $\Omega^{n-1}_{\hX}(\log E)$.
  
  \smallskip
  \noindent (ii) We have the exact sequence
\begin{equation}\label{eqn0}
0 \to T_{\hX}(-\log E) \to T_{\hX} \to \bigoplus_iN_{E_i/\hX} \to 0.
\end{equation} 
  Since $E$ is the exceptional set of $\pi$, $\bigoplus_i H^0(E_i; N_{E_i/\hX}) =0$. Thus   $H^1(\hX; T_{\hX}(-\log E)) \to H^1(\hX; T_{\hX})$ is injective and its cokernel is contained in  $\bigoplus_i H^1(E_i; N_{E_i/\hX})$. Hence $H^1(\hX; T_{\hX}(-\log E)) \to H^1(\hX; T_{\hX})$ is an isomorphism if $H^1(E_i; N_{E_i/\hX}) =0$ for every $i$.

   \smallskip
  \noindent (iii)  As in \cite[\S1]{FL22b} define the complex $\mathcal{D}^\bullet$ as the complex
  $$T_{\hX} \to N_{E/\hX}$$
  in degrees $0$ and $1$. Then $\mathbb{H}^1(\hX; \mathcal{D}^\bullet)$ is the tangent space to deformations of $\hX$ keeping the divisor $E$ and there is an exact sequence 
  $$H^0(E; N_{E/\hX}) \to \mathbb{H}^1(\hX; \mathcal{D}^\bullet) \to H^1(\hX; T_{\hX}) \to H^1(E; N_{E/\hX}).$$
  Thus, as $H^0(E; N_{E/\hX}) =0$, $\mathbb{H}^1(\hX; \mathcal{D}^\bullet) \to H^1(\hX; T_{\hX})$ is always injective, and it is surjective if $H^1(E; N_{E/\hX})= H^1(E; \scrO_E(E))=0$. 
  
   \smallskip
  \noindent (iv) By (i),  $H^2(\hX;T_{\hX}(-\log E)) \cong H^2(\hX; \Omega^{n-1}_{\hX}(\log E))$.  
 By the proof of Theorem~\ref{maintheorem}(i), 
 $$H^2(\hX; \Omega^{n-1}_{\hX}(\log E)) \cong  H^2(\hX; \Omega^{n-1}_{\hX}(\log E)|E)   = \Gr^{n-1}_FH^{n+1}(L).$$ In particular, if $X$ is a  local complete intersection, then   $\ell^{n-1, 2} =\dim H^2(\hX; \Omega^{n-1}_{\hX}(\log E)|E)=0$ by Proposition~\ref{locinvvan}(iv) and thus $H^2(\hX; \Omega^{n-1}_{\hX}(\log E)) = H^2(\hX;T_{\hX}(-\log E)) = 0$.  If $E$ is smooth, then, by Proposition~\ref{locinvvan}(iii), the map
 $$\Gr^{n-1}_FH^{n+1}(L) = H^2(\hX; \Omega^{n-1}_{\hX}(\log E)|E) \to \Gr^{n-1}_FH^{n+2}_E(\hX) = H^2(\hX; \Omega^{n-1}_{\hX}(\log E)/\Omega^{n-1}_{\hX})$$
 is injective. Since $E$ is smooth, $\Omega^{n-1}_{\hX}(\log E)/\Omega^{n-1}_{\hX} \cong \Omega^{n-2}_E$.  Thus, if $h^{n-2,2}(E) =0$, then $H^2(E; \Omega^{n-2}_E) =0$, and hence $H^2(\hX; \Omega^{n-1}_{\hX}(\log E)|E) =0$. Thus $H^2(\hX;T_{\hX}(-\log E)) = 0$ in this case as well. 
  \end{proof} 
  
  \begin{remark}\label{remark2.6} (i)  If $E$ is smooth and   $\scrO_E(-E)$ is ample, then  $H^1(E; \scrO_E(E)) =0$ and thus $E$ is stable and the hypotheses of (ii) are satisfied. More generally,   if $E$ is not necessarily smooth but  $\scrO_E(-E)$ is ample, then again  $H^1(E; \scrO_E(E)) =0$ (since $\dim E \ge 2$) and the hypotheses of (iii) are satisfied. 
  
\smallskip
  \noindent (ii)    By Proposition~\ref{Prop2.3}(i) above,   $\Omega^{n-1}_{\hX}(\log E) \cong T_{\hX}(-\log E)$. Thus the image of $H^1(\hX; \Omega^{n-1}_{\hX}(\log E))$ in   $H^0(X; T^1_X)$ is the  same as that of $H^1(\hX;T_{\hX}(-\log E))$, in other words the deformations of $\hX$ keeping the divisors $E_i$. In particular, this is a rather small subspace of $H^0(X; T^1_X)$.   Note that $H^1(\hX; \Omega^{n-1}_{\hX}(\log E))$ is a birational invariant of $X$, i.e.\ is independent of the choice of a resolution (cf.\ \cite[Remark 3.15]{FL}). Thus, if $H^1(\hX; T_{\hX}(-\log E)) \to H^1(\hX; T_{\hX})$ is an isomorphism, then the image of $ H^1(\hX; T_{\hX})$ in $H^0(X; T^1_X)$ is also independent of the choice of a resolution. 
  
   \smallskip
  \noindent (iii) In the notation of the proof of Proposition~\ref{Prop2.3}(iii), the cohomology sheaves of the complex $\mathcal{D}^\bullet$ are: $\mathcal{H}^0\mathcal{D}^\bullet = T_{\hX}(-\log E)$,  $\mathcal{H}^1\mathcal{D}^\bullet = T^1_E$, and $\mathcal{H}^i\mathcal{D}^\bullet = 0$ for $i > 1$.  Thus the second spectral sequence of hypercohomology gives an  exact sequence 
  $$H^1(\hX;T_{\hX}(-\log E)) \to \mathbb{H}^1(\hX; \mathcal{D}^\bullet) \to H^0(E; T^1_E) \to H^2(\hX;T_{\hX}(-\log E)).$$
By Proposition~\ref{Prop2.3}(iv), if  $X$ is a local complete intersection, then $\mathbb{H}^1(\hX; \mathcal{D}^\bullet) \to H^0(E; T^1_E) $ is surjective.  Additionally, if  $\scrO_E(E)$ is the dual of an ample line bundle on $E$, then 
$$H^0(E; T^1_E) = H^0(E_{\text{\rm{sing}}}; \scrO_E(E)|E_{\text{\rm{sing}}}) = 0.$$
  
   \smallskip
  \noindent (iv) As in the proof of \cite[Theorem 1.6(v)]{FL22b}, there is an exact sequence
  $$0 \to T_{\hX}(-E) \to T_{\hX}(-\log E) \to T^0_E \to 0,$$
  where $T^0_E$ is the tangent sheaf of $E$. Thus, if $H^3(\hX; T_{\hX}(-E)) = H^3(\hX; \Omega^{n-1}_{\hX}) =0$ and if $X$ is a local complete intersection so that $H^2(\hX;T_{\hX}(-\log E)) = 0$, then $H^2(E; T^0_E) = 0$. 
  
   \smallskip
  \noindent (v) In the statement of Proposition~\ref{Prop2.3}(iv), under the assumption that $E$ is smooth, the link exact sequence gives the following exact sequence:
  $$0 \to \Gr^{n-1}_FH^{n+1}(L)  \to \Gr^{n-1}_FH^{n+2}_E(\hX) \cong H^2(E; \Omega_E^{n-2}) \to \Gr^{n-1}_FH^{n+2}(E) \cong H^3(E; \Omega_E^{n-1}).$$
  The map $H^2(E; \Omega_E^{n-2}) \to H^3(E; \Omega_E^{n-1})$ is given by the cup product map $\smile ([E]|E)$, where $[E] \in H^1(\hX; \Omega^1_{\hX})$ is the fundamental class of $E$ and $[E]|E$ denotes its restriction to $H^1(E; \Omega_E^1)$. Thus $H^2(\hX;T_{\hX}(-\log E)) \cong \Gr^{n-1}_FH^{n+1}(L)=0$ $\iff$  $\smile ([E]|E)\colon H^2(E; \Omega_E^{n-2}) \to H^3(E; \Omega_E^{n-1})$ is injective. If $n=3$ and $E$ is a $K3$ surface, $h^{1,2}(E)  =0$ and hence $\smile ([E]|E)$ is automatically injective. If $n=4$, then $H^3(E; \Omega_E^3) \cong \Cee$, and so  $\smile ([E]|E)$ is injective $\iff$ $h^{2,2}(E) = 1$. For $n \ge 5$, if $E$ is a strict Calabi-Yau manifold, i.e.\ if $h^i(E; \scrO_E) = 0$ for $1\le i \le n-2$, then $H^3(E; \Omega_E^{n-1}) \cong H^3(E; \scrO_E)= 0$, and hence $\smile ([E]|E)$ is injective $\iff$ $h^{n-2,2}(E)  =0$.  
\end{remark}
  
   \subsection{Some special cases}
  By Theorem~\ref{maintheorem}(vi), it is natural to look for cases where the map $K' \to H^2_E( \hX; \Omega^{n-1}_{\hX}(\log E))$ is an isomorphism.  The first examples are concerned with dimension $3$, but with somewhat restrictive hypotheses.
  
  \begin{proposition}\label{prop2.7} If $\dim X =3$,  $X$ is  $0$-liminal, and $H^2(E_i;\scrO_{E_i}) =0$ for every $i$, then 
 $\Gr_F^2H^4_E(\hX) = H^4_E(\hX)/\im H^1_E(\hX; \Omega^3_{\hX})$ and   hence the map $K' \to H^2_E( \hX; \Omega^2_{\hX}(\log E))$ is an isomorphism.
  \end{proposition}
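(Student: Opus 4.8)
The plan is to work throughout with $n=3$, so that $\Omega^{n-1}_{\hX} = \Omega^2_{\hX}$ and $\Omega^n_{\hX} = \Omega^3_{\hX} = K_{\hX}$, and to prove the two assertions in sequence. For the first equality, I would start from Theorem~\ref{maintheorem}(v), which identifies $\im H^1_E(\hX; \Omega^3_{\hX})$ with $\Gr^3_F H^4_E(\hX) = F^3 H^4_E(\hX)$. Consequently $H^4_E(\hX)/\im H^1_E(\hX; \Omega^3_{\hX}) = H^4_E(\hX)/F^3 H^4_E(\hX)$, and since the bottom graded piece $\Gr^0_F H^4_E(\hX) = H^3(\hX; \scrO_{\hX}/\scrO_{\hX}) = 0$, this quotient carries only the two graded pieces $\Gr^2_F$ and $\Gr^1_F$. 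Hence the desired identity $\Gr^2_F H^4_E(\hX) = H^4_E(\hX)/\im H^1_E(\hX; \Omega^3_{\hX})$ is equivalent to the single vanishing $\Gr^1_F H^4_E(\hX) = 0$.

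To obtain that vanishing, I would invoke the $E_1$-degenerate Hodge spectral sequence (Proposition~\ref{linkexseq}, in the form used in the proof of Theorem~\ref{maintheorem}(v)), which gives $\Gr^1_F H^4_E(\hX) \cong H^2(\hX; \Omega^1_{\hX}(\log E)/\Omega^1_{\hX})$. The residue sequence for the simple normal crossing divisor $E$ then yields $\Omega^1_{\hX}(\log E)/\Omega^1_{\hX} \cong \bigoplus_i \scrO_{E_i}$, so that $H^2(\hX; \Omega^1_{\hX}(\log E)/\Omega^1_{\hX}) \cong \bigoplus_i H^2(E_i; \scrO_{E_i})$. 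The hypothesis $H^2(E_i; \scrO_{E_i}) = 0$ for all $i$ forces $\Gr^1_F H^4_E(\hX) = 0$, completing the first assertion.

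For the ``hence'', I would feed this into the final statement of Theorem~\ref{maintheorem}(vi). That statement requires that the image of $\beta\colon H^2_E(\hX; \Omega^2_{\hX}) \to H^4_E(\hX)/\im H^1_E(\hX; \Omega^3_{\hX})$ be exactly $\Gr^2_F H^4_E(\hX)$. By Theorem~\ref{maintheorem}(vi) the image of $\beta$ always contains $\Gr^2_F H^4_E(\hX)$; and by the first part just proved, the entire target $H^4_E(\hX)/\im H^1_E(\hX; \Omega^3_{\hX})$ coincides with $\Gr^2_F H^4_E(\hX)$, so $\im\beta$ is a fortiori contained in $\Gr^2_F H^4_E(\hX)$. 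The two containments force $\im\beta = \Gr^2_F H^4_E(\hX)$, and Theorem~\ref{maintheorem}(vi) then delivers the isomorphism $K' \cong H^2_E(\hX; \Omega^2_{\hX}(\log E))$.

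I expect essentially all the heavy lifting to be already packaged inside Theorem~\ref{maintheorem}(v)--(vi), so the only genuinely content-bearing step is the identification $\Gr^1_F H^4_E(\hX) \cong \bigoplus_i H^2(E_i; \scrO_{E_i})$. The delicate point there is getting the indexing of the graded piece correct from the degenerating spectral sequence and applying the residue sequence to the \emph{reduced} SNC divisor, so that the quotient is precisely $\bigoplus_i \scrO_{E_i}$ and not a twisted variant; once that bookkeeping is right, the dimension-$3$ hypothesis $H^2(E_i; \scrO_{E_i}) = 0$ does all the remaining work, and the passage to the isomorphism $K' \cong H^2_E(\hX; \Omega^2_{\hX}(\log E))$ is a formal consequence of the already-established parts of Theorem~\ref{maintheorem}.
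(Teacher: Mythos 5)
Your argument is correct and follows the same route as the paper's own proof: both reduce the first claim to the vanishing of $\Gr^0_F H^4_E(\hX)$ (automatic) and $\Gr^1_F H^4_E(\hX) \cong \bigoplus_i H^2(E_i;\scrO_{E_i})$ (the hypothesis), and then both deduce $\im\beta = \Gr^2_F H^4_E(\hX)$ from the two containments and conclude via the final statement of Theorem~\ref{maintheorem}(vi). The only difference is that you spell out the bookkeeping (the $E_1$-degeneration indexing and the residue identification $\Omega^1_{\hX}(\log E)/\Omega^1_{\hX}\cong\bigoplus_i\scrO_{E_i}$) slightly more explicitly than the paper does.
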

  \begin{proof} By hypothesis, 
  $$\Gr^1_FH^4_E(\hX)= H^2(\hX; \Omega^1_{\hX}(\log E)/ \Omega^1_{\hX}) \cong \bigoplus_iH^2(E_i;\scrO_{E_i})=0.$$ 
  Moreover $\Gr^0_FH^4_E(\hX)= 0$ since by definition $\Omega^0_{\hX}(\log E) =\scrO_{\hX}$. Thus  $H^4_E(\hX)/\im H^1_E(\hX; \Omega^3_{\hX})=\Gr^2_FH^4_E(\hX)$. By Theorem~\ref{maintheorem}(vi),  the image of   $H^2_E(\hX; \Omega^2_{\hX}) \to H^4_E(\hX)/\im H^1_E(\hX; \Omega^3_{\hX})$ contains $\Gr^2_FH^4_E(\hX)$, and hence must be equal to $\Gr^2_FH^4_E(\hX)$. Then $K' \to H^2_E( \hX; \Omega^2_{\hX}(\log E))$ is an isomorphism, again by Theorem~\ref{maintheorem}(vi).
    \end{proof} 
  
 \begin{remark}\label{remark2.8}   Roughly speaking, the condition in Proposition~\ref{prop2.7} that $H^2(E_i;\scrO_{E_i}) =0$ for every $i$ means that $E$ contains no component which is a $K3$ surface. Example~\ref{95} shows that there is a plentiful supply of examples which do not satisfy the hypotheses of Proposition~\ref{prop2.7}. Some of these are   weighted cones over $K3$ surfaces, and thus  are covered by Corollary~\ref{wtdcor} below. On the other hand, positive weight deformations of such a weighted cone will not satisfy the hypotheses of either Proposition~\ref{prop2.7} or Corollary~\ref{wtdcor}. 
  \end{remark}
  
   The conditions of Theorem~\ref{maintheorem}(vi) also hold  quite generally in  the weighted homogeneous case. First, we note the following:
   
   \begin{proposition}\label{wtdcase}  Assume that $X$ is a weighted homogeneous $0$-liminal hypersurface singularity. Then:
   \begin{align*}
   H^1(\hX; \Omega^{n-1}_{\hX}(\log E)) &= \bigoplus_{a\ge 0}H^0(X; T^1_X)(a);\\
   \Gr^{n-1}_FH^n(L) &= H^0(X; T^1_X)(0);\\
  \im  H^1(\hX; \Omega^{n-1}_{\hX}(\log E)(-E)) &=  \bigoplus_{a> 0}H^0(X;T^1_X)(a);\\
   K &\cong \bigoplus_{a\leq 0}H^0(X;T^1_X)(a);\\
  H^2_E(\hX; \Omega^{n-1}_{\hX}(\log E)) &\cong  \bigoplus_{a< 0}H^0(X;T^1_X)(a) .
   \end{align*} 
   \end{proposition}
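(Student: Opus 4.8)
The plan is to use the $\Cee^*$-action coming from the weighted homogeneous structure to equip every group in Theorem~\ref{maintheorem} with a weight grading, and then to show that each of the five subspaces is concentrated in a prescribed range of weights; the displayed identities will then follow by taking graded pieces of the exact sequences in Theorem~\ref{maintheorem}. Write $X = \{f = 0\}\subset \Cee^{n+1}$ with $\Cee^*$ acting by $t\cdot z_i = t^{a_i}z_i$, so that $f$ is homogeneous of weight $d$; the $0$-liminal condition $\sum_ia_i = d$ of Example~\ref{95} is exactly the statement that the Gelfand--Leray generator $\Omega/df$ of $\omega_X$, with $\Omega = dz_1\wedge\cdots\wedge dz_{n+1}$, has weight $0$. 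I would take a $\Cee^*$-equivariant good resolution $\pi\colon \hX\to X$ (a weighted blowup, possibly further resolved equivariantly, with $E\subset \Pee(a_1,\dots,a_{n+1})$ the corresponding weighted projective hypersurface), so that $E$, the sheaves $\Omega^{n-1}_{\hX}(\log E)$ and $\Omega^{n-1}_{\hX}(\log E)(-E)$, and all the cohomology groups of Theorem~\ref{maintheorem} acquire $\Cee^*$-gradings and every map there is a morphism of graded vector spaces. Via Schlessinger's isomorphism $H^0(X; T^1_X)\cong H^1(U; T_U)$, the group $H^0(X; T^1_X)$ is identified as a graded vector space with the Milnor algebra $\Cee[z]/(\partial f/\partial z_1,\dots,\partial f/\partial z_{n+1})$, where a monomial $m$ gives the deformation $f + sm$ and sits in weight $\deg m - d$; this fixes the meaning of $H^0(X; T^1_X)(a)$.

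The core of the argument consists of three weight-concentration statements:
\begin{enumerate}
\item[(a)] $H^1(\hX; \Omega^{n-1}_{\hX}(\log E))$ is concentrated in weights $a\ge 0$, while $H^2_E(\hX; \Omega^{n-1}_{\hX}(\log E))$ is concentrated in weights $a<0$;
\item[(b)] the image of $H^1(\hX; \Omega^{n-1}_{\hX}(\log E)(-E))$ in $H^0(X; T^1_X)$ lies in weights $a>0$;
\item[(c)] $\Gr^{n-1}_FH^n(L)$ is pure of weight $0$.
\end{enumerate}
I would establish (a)--(c) by passing to the equivariant local model of $\hX$ near $E$. Since $\pi$ is a weighted blowup, an analytic neighborhood of $E$ is modeled on the total space of a conormal-type bundle over $E$ with $\Cee^*$ scaling the fibre direction, so that the cohomology of an equivariant sheaf decomposes over $E$ by the fibre-degree, and the weight $a$ is identified with that degree (equivalently, with the order of vanishing along $E$). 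Twisting $\Omega^{n-1}_{\hX}(\log E)$ down by $\scrO_{\hX}(-E)$ raises the fibre-degree by one, which is precisely the shift between $a\ge 0$ in (a) and $a>0$ in (b). The vanishing of the wrong-weight pieces is then a Kawamata--Viehweg statement: as in Lemma~\ref{newlemma1.16}(ii) and Remark~\ref{remark2.6}, the bundle $\scrO_E(-E)$ is ample (the orbifold $\scrO(1)$ on $E\subset\Pee(a_1,\dots,a_{n+1})$), so the higher cohomology of $E$ with coefficients in the positive twists vanishes, killing the negative-weight part of $H^1(\log E)$ and the non-negative part of $H^2_E(\log E)$. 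For (c), the weight-$0$ piece corresponds to monodromy eigenvalue $1$ (the degree-$d$ monomials), which is exactly the part of $H^n$ that persists on the link $L$; the normalization $\sum_iw_i = 1$ is what makes this the correct weight, and I would read it off from the residue identification $H^1(E; \Omega^{n-1}_{\hX}(\log E)|E) = \Gr^{n-1}_FH^n(L)$ used in the proof of Theorem~\ref{maintheorem}(ii).

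Granting (a)--(c), the five identities are formal. Since $X$ is lci, Theorem~\ref{maintheorem}(i) gives the short exact sequence
$$0 \to H^1(\hX; \Omega^{n-1}_{\hX}(\log E)) \to H^0(X; T^1_X) \to H^2_E(\hX; \Omega^{n-1}_{\hX}(\log E)) \to 0,$$
and taking graded pieces with (a) yields $H^1(\hX; \Omega^{n-1}_{\hX}(\log E)) = \bigoplus_{a\ge 0}H^0(X;T^1_X)(a)$ together with $H^2_E(\hX; \Omega^{n-1}_{\hX}(\log E)) \cong \bigoplus_{a<0}H^0(X;T^1_X)(a)$. The sequence
$$H^1(\hX; \Omega^{n-1}_{\hX}(\log E)(-E)) \to H^0(X; T^1_X) \to K \to 0$$
of Theorem~\ref{maintheorem}(iv), combined with (b) and with the fact (from the extension $0\to \Gr^{n-1}_FH^n(L)\to K\to H^2_E(\hX;\Omega^{n-1}_{\hX}(\log E))\to 0$ together with (a) and (c)) that $K$ is concentrated in weights $a\le 0$, forces $\im H^1(\hX; \Omega^{n-1}_{\hX}(\log E)(-E)) = \bigoplus_{a>0}H^0(X;T^1_X)(a)$ and hence $K\cong \bigoplus_{a\le 0}H^0(X;T^1_X)(a)$. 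Finally that same extension together with (c) places $\Gr^{n-1}_FH^n(L)$ in weight $0$, giving $\Gr^{n-1}_FH^n(L) = H^0(X;T^1_X)(0)$ and confirming the compatibility $\{a\le 0\} = \{a=0\}\sqcup\{a<0\}$.

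The step I expect to be the main obstacle is establishing the concentration statements (a)--(c) with the exact thresholds, since everything hinges on a clean dictionary between the $\Cee^*$-weight, the order of vanishing along $E$, and the Hodge filtration. The $0$-liminal condition $\sum_iw_i = 1$ enters precisely to normalize the generator of $\omega_X$ to weight $0$, so any off-by-one error in the weight bookkeeping would shift all five identities; care is also needed because $E$ is in general only an orbifold and may be reducible, so the Kawamata--Viehweg input must be applied in the klt/orbifold setting. Once the weight dictionary is pinned down, the deduction of the identities is purely formal diagram-chasing in the graded category.
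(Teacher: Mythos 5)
Your overall architecture matches the paper's: the statement is proved by putting the canonical $\Cee^*$-grading on every group in Theorem~\ref{maintheorem}, locating each subspace in a range of weights, and then reading off the identities from the (graded) exact sequences. Your grading convention (the monomial $m$ sits in weight $\deg m - d$, normalized by the $0$-liminal condition $\sum_i a_i = d$) is the right one, and your formal deduction of the five identities from the concentration statements (a)--(c) is airtight. The paper, however, does none of this from scratch: its proof is a one-line citation to Theorems 3.3 and 3.5 and Corollary 3.6 of \cite{FL}, which establish exactly your (a)--(c) for a general weighted homogeneous hypersurface singularity with thresholds shifted by $N=\sum_i a_i - d$; the present proposition is the specialization $N=0$. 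So the real question is whether your independent justification of (a)--(c) holds up, and there I see genuine gaps.

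First, the vanishing step is not a Kawamata--Viehweg statement. Kawamata--Viehweg controls $H^i$ of line bundles of the form $K+(\text{nef and big})$; the groups $H^1(\hX;\Omega^{n-1}_{\hX}(\log E))$ and $H^2_E(\hX;\Omega^{n-1}_{\hX}(\log E))$ are cohomology with coefficients in a rank-$\binom{n}{1}$ logarithmic sheaf, and they do not decompose as cohomology of positive twists of line bundles on $E$ in any way that ampleness of $\scrO_E(-E)$ would control. Second, the ``local model'' on which the whole weight dictionary rests is not available on a log resolution: the weighted blowup has $E\subset\Pee(a_1,\dots,a_{n+1})$ an orbifold and $\hX$ singular, while after an equivariant log resolution $E$ is a reducible normal crossing divisor with no single ``fibre degree,'' and $\scrO_{E_i}(-E)$ need not be ample on the new components. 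The identification of the global $\Cee^*$-weight on $H^1(\hX;\mathcal{F})$ with an order of vanishing along $E$ is precisely the content of \cite[Theorem 3.3, 3.5]{FL} (carried out there via the explicit generators $m\,\Omega/df$ and the link/local-cohomology sequences, not via a bundle model), and it is the step your sketch assumes rather than proves. Third, in (c) you conflate the weight-$0$ piece with the monodromy-eigenvalue-$1$ part: in your normalization eigenvalue $1$ corresponds to all weights $a\equiv 0 \pmod d$, and what isolates $a=0$ inside that part is the Hodge level $n-1$ in $\Gr^{n-1}_FH^n(L)$; this bookkeeping is exactly where an off-by-one error would propagate into all five identities, as you yourself note. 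In short: correct statement of what needs to be proved and a correct formal deduction, but the three concentration statements are asserted with heuristics in place of proofs; the paper outsources them to \cite{FL}.
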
 
    \begin{proof} These follow from  \cite[Theorem 3.3, 3.5]{FL} as in the proof of \cite[Corollary 3.6]{FL}, noting that in the $0$-liminal case $N=0$. 
    \end{proof}

    \begin{corollary}\label{wtdcor}  If $X$ is a weighted homogeneous $0$-liminal hypersurface singularity, then 
    $$K \cong K'\oplus  \Gr^{n-1}_FH^n(L).$$
    Thus $K'=  \bigoplus_{a< 0}H^0(X;T^1_X)(a) $ is the negative weight space in $H^0(X;T^1_X)$ and the induced map $K' \to H^2_E( \hX; \Omega^2_{\hX}(\log E))$ is an isomorphism.
    \end{corollary}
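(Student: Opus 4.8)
The plan is to verify, in the weighted homogeneous setting, the sufficient criterion isolated at the end of Theorem~\ref{maintheorem}(vi): if the image of the edge map $\beta\colon H^2_E(\hX;\Omega^{n-1}_{\hX})\to H^{n+1}_E(\hX)/\im H^1_E(\hX;\Omega^n_{\hX})$ is equal to $\Gr^{n-1}_FH^{n+1}_E(\hX)$, then $K\cong K'\oplus\Gr^{n-1}_FH^n(L)$ and the induced map $K'\to H^2_E(\hX;\Omega^{n-1}_{\hX}(\log E))$ is an isomorphism. Part (vi) already gives the containment $\im\beta\supseteq\Gr^{n-1}_FH^{n+1}_E(\hX)$ for free, so the entire corollary reduces to proving the opposite containment $\im\beta\subseteq\Gr^{n-1}_FH^{n+1}_E(\hX)$. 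Granting that, the first displayed isomorphism of the corollary is immediate from (vi), and the remaining assertions — that $K'$ is the negative weight space and that $K'\to H^2_E(\hX;\Omega^{n-1}_{\hX}(\log E))$ is an isomorphism — become bookkeeping with the weight gradings recorded in Proposition~\ref{wtdcase}.

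Concretely, I would fix a $\Cee^*$-equivariant resolution, so that all the sheaves $\Omega^p_{\hX}$, $\Omega^p_{\hX}(\log E)$ and every (local and global) cohomology group appearing in Theorem~\ref{maintheorem} carry weight gradings for which all the maps there, in particular $\beta$, are homogeneous of weight $0$. Proposition~\ref{wtdcase} then gives the dictionary: under $K\cong\bigoplus_{a\le0}H^0(X;T^1_X)(a)$, the summand $\Gr^{n-1}_FH^n(L)=H^0(X;T^1_X)(0)$ is the weight-$0$ part, while $H^2_E(\hX;\Omega^{n-1}_{\hX}(\log E))\cong\bigoplus_{a<0}H^0(X;T^1_X)(a)$ carries the strictly negative weights, so that the exact sequence $0\to\Gr^{n-1}_FH^n(L)\to K\to H^2_E(\hX;\Omega^{n-1}_{\hX}(\log E))\to0$ of Theorem~\ref{maintheorem}(iv) is precisely the weight splitting of $K$. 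Once the splitting $K\cong K'\oplus\Gr^{n-1}_FH^n(L)$ is established, matching weights identifies $K'$ with the negative weight space $\bigoplus_{a<0}H^0(X;T^1_X)(a)$ and the map $K'\to H^2_E(\hX;\Omega^{n-1}_{\hX}(\log E))$ with the identity on it.

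The main obstacle is the containment $\im\beta\subseteq\Gr^{n-1}_FH^{n+1}_E(\hX)$. The difficulty is that $\beta$ is built from the non-logarithmic truncated complex $\Omega^\bullet_{\hX}/\Omega^n_{\hX}[n]$, whose Hodge spectral sequence need not degenerate at $E_1$; a priori $\im\beta$ could therefore acquire contributions from the lower graded pieces $\Gr^p_FH^{n+1}_E(\hX)$ with $p<n-1$, and this is exactly what must be excluded. In contrast to the dimension-$3$ argument of Proposition~\ref{prop2.7}, where the vanishing $H^2(E_i;\scrO_{E_i})=0$ kills the offending pieces outright, here I would control them through the bigrading by Hodge level and $\Cee^*$-weight coming from \cite[Theorem 3.3, 3.5]{FL}, following the computation of \cite[Corollary 3.6]{FL} specialized to the case $N=0$. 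The point to extract from that description is that the lower pieces $\Gr^p_FH^{n+1}_E(\hX)$ with $p<n-1$ which could meet $\im\beta$ sit in weights that are incompatible with the weight-$0$ map $\beta$, forcing $\beta$ to land in the top, weight-$0$ piece $\Gr^{n-1}_FH^{n+1}_E(\hX)$. Establishing this weight incompatibility cleanly, using the condition $N=0$ and the explicit graded pieces from \cite{FL}, is the step I expect to be the crux.
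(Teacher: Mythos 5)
Your reduction of the corollary to the criterion $\im\beta=\Gr^{n-1}_FH^{n+1}_E(\hX)$ from the last sentence of Theorem~\ref{maintheorem}(vi) is where the argument goes astray, for two reasons. First, that criterion is only \emph{sufficient}: it would in fact give the stronger splitting $H^2_E(\hX;\Omega^{n-1}_{\hX})=K'\oplus\Gr^{n-1}_FH^{n+1}_E(\hX)$ of the whole group, not just of $K$, and nothing in the paper asserts it holds for weighted homogeneous singularities. Indeed Remark~\ref{remark2.8} points out that the weighted cones over $K3$ surfaces are exactly the examples \emph{not} covered by Proposition~\ref{prop2.7}, i.e.\ ones where $H^2(E_i;\scrO_{E_i})\neq 0$, so the lower Hodge graded pieces of the target $H^{n+1}_E(\hX)/\im H^1_E(\hX;\Omega^n_{\hX})$ do not vanish and there is no cheap reason for $\im\beta$ to be confined to $\Gr^{n-1}_F$. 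Second, you explicitly leave the containment $\im\beta\subseteq\Gr^{n-1}_FH^{n+1}_E(\hX)$ --- the ``weight incompatibility'' --- as the expected crux without establishing it; since both the source of $\beta$ and each graded piece $\Gr^p_FH^{n+1}_E(\hX)$ carry their own weight decompositions, there is no a priori reason the relevant weights are disjoint, and this is precisely the unproven content. So as written the proposal has a genuine gap at its central step.

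The paper's route avoids $\beta$ entirely. Theorem~\ref{maintheorem}(vi) already records the unconditional inclusion $K'\oplus\Gr^{n-1}_FH^n(L)\hookrightarrow K$ together with the equivalences: this inclusion is an isomorphism $\iff$ $\dim K'=b^{1,n-2}$ $\iff$ the induced map $K'\to H^2_E(\hX;\Omega^{n-1}_{\hX}(\log E))$ is an isomorphism. The weighted homogeneous computations behind Proposition~\ref{wtdcase} (that is, \cite[Theorems 3.3, 3.5]{FL} specialized to $N=0$, exactly as in \cite[Corollary 3.6]{FL}) compute not only $K\cong\bigoplus_{a\le 0}H^0(X;T^1_X)(a)$, $\Gr^{n-1}_FH^n(L)=H^0(X;T^1_X)(0)$ and $H^2_E(\hX;\Omega^{n-1}_{\hX}(\log E))\cong\bigoplus_{a<0}H^0(X;T^1_X)(a)$, but also $K'$ itself as the strictly negative weight space. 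This gives $\dim K'=b^{1,n-2}$ directly, and the splitting and the identification of $K'\to H^2_E(\hX;\Omega^{n-1}_{\hX}(\log E))$ as an isomorphism then follow from the equivalences in (vi). Your ``bookkeeping'' paragraph is the right endgame, but it only becomes available after this direct computation of $K'$, which is the step your proposal replaces with the unverified statement about $\im\beta$.
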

    \begin{proof}  This follows as in   the proof of \cite[Corollary 3.6]{FL}.
    \end{proof}
   
\subsection{Some nonsmoothable examples}\label{section2.5} This paper is mainly concerned with the smoothability of generalized Calabi-Yau varieties $Y$, for which the lci assumption plays a major role.  In this section, we  consider some locally non-smoothable  examples due to  Coughlan-Sano \cite{CS} from the above point of view.  Throughout this section, we make the following assumption: $X$ is an isolated normal  singularity with resolution $\pi\colon \hX \to X$, such that the exceptional divisor $E$ is smooth, $K_{\hX}= \scrO_{\hX}(-E)$, $h^i(E;\scrO_E) = 0$ for $1\le i \le n-2$, and $\scrO_{\hX}(-E)|E$ is ample. By Lemmas~\ref{newlemma1.16} and ~\ref{goodisstrict}, $X$ is Gorenstein and $0$-liminal. Our goal is to show the following:

\begin{theorem}\label{nonsmooththm}  Under the above assumptions, suppose that $H^{n-2}(\hX; \Omega^1_{\hX}(\log E)(-E)) = 0$ and either that $h^{n-2,2}(E)  =0$ or that $n=4$ and $b_2(E) =1$. Then the local singularity $(X,x)$ is not smoothable. 
\end{theorem}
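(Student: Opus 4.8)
The plan is to show that, under the stated hypotheses, \emph{every} deformation of $X$ is induced by a deformation of the pair $(\hX,E)$ that preserves the exceptional divisor, and that such deformations necessarily retain a contracted Calabi--Yau divisor in every fiber, hence remain singular. I would first unwind the two numerical hypotheses. The vanishing $H^{n-2}(\hX;\Omega^1_{\hX}(\log E)(-E))=0$ is exactly $b^{1,n-2}=0$, and since $H^2_E(\hX;\Omega^{n-1}_{\hX}(\log E))$ is Serre dual to $H^{n-2}(\hX;\Omega^1_{\hX}(\log E)(-E))$, it says $H^2_E(\hX;\Omega^{n-1}_{\hX}(\log E))=0$. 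Feeding this into the exact sequence of Theorem~\ref{maintheorem}(i) collapses it to an isomorphism
$$H^1(\hX;\Omega^{n-1}_{\hX}(\log E)) \xrightarrow{\ \sim\ } H^1(U;T_U)=H^0(X;T^1_X).$$
By Proposition~\ref{Prop2.3}(i), $\Omega^{n-1}_{\hX}(\log E)\cong T_{\hX}(-\log E)$, so the left-hand side is the tangent space to deformations of $(\hX,E)$. Since $E$ is smooth with $\scrO_E(-E)$ ample and $\dim E\ge 2$, Remark~\ref{remark2.6}(i) gives $H^1(E;\scrO_E(E))=0$, so by Proposition~\ref{Prop2.3}(ii) this tangent space also equals $H^1(\hX;T_{\hX})$ and $E$ is stable.

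Second, both alternative hypotheses are precisely the conditions recorded in Proposition~\ref{Prop2.3}(iv) and Remark~\ref{remark2.6}(v) that force $H^2(\hX;T_{\hX}(-\log E))\cong\Gr^{n-1}_FH^{n+1}(L)=0$: the case $h^{n-2,2}(E)=0$ is immediate, and for $n=4$ one checks that $b_2(E)=1$ is equivalent to $h^{2,2}(E)=1$ (using that $E$ is a strict Calabi--Yau threefold, so $b_2(E)=h^{1,1}(E)=h^{2,2}(E)$), which makes the cup-product map of Remark~\ref{remark2.6}(v) injective. Because $E$ is smooth, $T^1_E=0$, so the complex $\mathcal{D}^\bullet$ of Proposition~\ref{Prop2.3}(iii) is quasi-isomorphic to $T_{\hX}(-\log E)$; hence the pair functor $\mathbf{Def}_{(\hX,E)}$ has obstruction space $H^2(\hX;T_{\hX}(-\log E))=0$ and is \emph{smooth}, with tangent space mapping isomorphically onto $H^0(X;T^1_X)$.

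Third, I would invoke the natural morphism $\Phi\colon \mathbf{Def}_{(\hX,E)}\to\mathbf{Def}_X$ obtained by contracting $E$; this is legitimate since $H^1(\hX;\scrO_{\hX})\cong H^1(E;\scrO_E)=0$ (Lemma~\ref{lemma1.1}(ii) and our assumption $h^1(E;\scrO_E)=0$) and $\scrO_E(-E)$ is ample, so Wahl's contraction persists in families. By the standard smoothness criterion for functors of Artin rings, a morphism with smooth source that is surjective on tangent spaces is itself smooth; here $d\Phi$ is an isomorphism, so $\Phi$ is smooth, hence surjective. Thus every deformation of $X$ is the contraction of a deformation of $(\hX,E)$ keeping $E$, and in such a family the divisor persists as a flat family of smooth Calabi--Yau manifolds $E_t$ (with $K_{E_t}=\scrO_{E_t}$ by deformation invariance, since $c_1=0$ is preserved and $h^0(K)=1$) with ample conormal bundle, contracted to a section of singular points.

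The crux, and the main obstacle, is to conclude that each resulting fiber $X_t$ is genuinely singular, which rules out a smoothing. The point is that if some $X_t$ were smooth at the image point, then $\hat{\mathcal{X}}_t\to X_t$ would be a birational morphism of smooth varieties contracting the divisor $E_t$; but exceptional divisors of such morphisms are uniruled, whereas $E_t$, having trivial canonical bundle, is not uniruled. This contradiction shows $X_t$ is singular for all $t$, so $X$ admits no smoothing. The genuine work is therefore in assembling the two cohomology vanishings cleanly and in justifying the smoothness and surjectivity of $\Phi$, the uniruledness obstruction itself being standard. An essentially equivalent but slightly quicker route avoids $\Phi$ altogether: since $H^1(\hX;T_{\hX}(-\log E))\xrightarrow{\ \sim\ }H^0(X;T^1_X)$, every first-order deformation of $X$ already preserves the contracted divisor $E$ and so cannot be a first-order smoothing, whence $X$ is not smoothable.
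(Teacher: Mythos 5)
Your argument follows the paper's proof almost step for step: both unwind the hypothesis $H^{n-2}(\hX;\Omega^1_{\hX}(\log E)(-E))=0$ into the vanishing of $H^2_E(\hX;\Omega^{n-1}_{\hX}(\log E))$ so that $H^1(\hX;T_{\hX}(-\log E))\to H^0(X;T^1_X)$ is an isomorphism, kill the obstruction space $H^2(\hX;T_{\hX}(-\log E))$ via Proposition~\ref{Prop2.3}(iv) and Remark~\ref{remark2.6}(v), and conclude that $\mathbf{Def}_{(\hX,E)}\to\mathbf{Def}_X$ is smooth and surjective, so every deformation of $X$ is induced by a deformation of the pair and is equisingular. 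The one genuine divergence is the endgame: you rule out a smooth fiber $X_t$ by observing that the contracted divisor $\mathcal{E}_t$ would be exceptional for a birational morphism onto a smooth variety, hence uniruled, contradicting $K_{\mathcal{E}_t}\cong\scrO_{\mathcal{E}_t}$; the paper instead notes that $H^{n-1}(\mathcal{E}_t;\scrO_{\mathcal{E}_t})=H^{n-1}(E;\scrO_E)\neq 0$ is incompatible with contracting $\mathcal{E}_t$ to a smooth (hence rational) point. Both contradictions are valid; yours is more geometric, the paper's more directly cohomological and shorter. Two cautions. First, passing from the Artinian statement ``every deformation lifts to the pair'' to an actual analytic family $(\widehat{\mathcal{X}},\mathcal{E})\to\Delta$ lying over a hypothetical one-parameter smoothing $\mathcal{X}\to\Delta$ requires a convergence input; the paper cites Bingener--Kosarew for this, and your phrase ``persists in families'' glosses over it. Second, your closing ``quicker route'' is not rigorous as stated: the singularities in question (e.g.\ cones over $K3$ surfaces) are not hypersurfaces or lci, so there is no first-order criterion for smoothability here, and the absence of a first-order smoothing does not by itself preclude a smoothing; the full functorial argument is needed.
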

\begin{proof}  Let    $\mathbf{Def}_{(\hX, E)}$ be the local deformation functor of deformations of the pair $(\hX, E)$.  In the case where $X =\Spec R$ is an affine variety, and hence   $\hX$ is a scheme (since the morphism $\hX \to X$ is projective), the  natural morphism of functors   $\mathbf{Def}_{(\hX, E)} \to \mathbf{Def}_X$ is given as follows: Given a deformation $(\hX_A ,E_A) \to \Spec A$ of $(\hX,E)$ over the Artin local $\Cee$-algebra, set $X_A= \Spec H^0(\hX_A ; \scrO_{\hX_A })$, an affine scheme over $\Spec A$ which is flat over $\Spec A$ by \cite[Proposition 2.7]{Wahl}. In particular, there is a morphism of $A$-schemes $\hX_A \to X_A$. 
In the analytic case, where $X$ is a Stein space, there is a similar construction   using the standard result that a  Stein space $X$ is determined by the ring $H^0(X; \scrO_X)$. On the level of Zariski tangent spaces, the morphism $\mathbf{Def}_{(\hX, E)} \to \mathbf{Def}_X$ induces a homomorphism $H^1(\hX; T_{\hX}(-\log E)) \to H^0(X; T^1_X)$. By Theorem~\ref{maintheorem}(i), there is an exact sequence
$$\begin{CD} 
0 @>>>  H^1(\hX; \Omega^{n-1}_{\hX}(\log E)) @>>>  H^1(U; T_U) @>>> H^2_E(\hX; \Omega^{n-1}_{\hX}(\log E))\\
@. @VV{\cong}V  @VV{\cong}V  @. \\
@.   H^1(\hX; T_{\hX}(-\log E))  @>>> H^0(X; T^1_X) @. 
\end{CD}$$
By assumption, $H^{n-2}(\hX; \Omega^1_{\hX}(\log E)(-E)) = 0$, and hence $H^2_E(\hX; \Omega^{n-1}_{\hX}(\log E)) =0$ by duality. Thus the natural map $H^1(\hX; T_{\hX}(-\log E)) \to H^0(X; T^1_X)$ is an isomorphism. 

By Proposition~\ref{Prop2.3}(iv) and Remark~\ref{remark2.6}(v), $H^2(\hX; T_{\hX}(-\log E)) =0$. (Note that, if $n=4$, by our assumptions on $E$, $h^{2,2}(E) =  b_4(E)= b_2(E)$.)   By \cite[Proposition 3.4.17]{Sernesi},   $\mathbf{Def}_{(\hX, E)}$ is unobstructed, i.e.\ there is a semi\-uni\-versal deformation whose base is a formal power series ring of dimension $\dim H^1(\hX; T_{\hX}(-\log E))$. It   follows from \cite[2.3.7]{Sernesi} that the natural morphism of functors $\mathbf{Def}_{(\hX, E)} \to \mathbf{Def}_X$ is smooth and hence surjective. It is injective as well, and therefore  an isomorphism, by
 \cite[Proposition 2.5(ii)]{Wahl}, since
$$H^1_E(\hX; T_{\hX}(-\log E)) = H^1_E(\hX; \Omega^{n-1}_{\hX}(\log E) ) =0,$$
by Proposition~\ref{Prop2.3}(i), and Lemma~\ref{extravan}.  Thus every deformation of $X$ is realized by a deformation of the pair $(\hX, E)$. In particular, such deformations are equisingular in the sense of \cite[2.4]{Wahl}, and do not correspond to smoothings of $X$. 

Concretely, assume by contradiction that there is a one parameter smoothing $\mathcal{X}\to \Delta$ of $X$ over the disk. By a theorem of Bingener and Kosarew \cite[(V.5.2)]{BingKos}, there is a germ $(\widehat{\mathcal{X}}, \mathcal{E})$ of a smooth analytic space $\widehat{\mathcal{X}}$ containing a smooth hypersurface $\mathcal{E}$ along with a morphism  $\widehat{\mathcal{X}} \to \Delta$ such that  the induced  morphism  $\mathcal{E} \to \Delta$ is proper, together with an induced    morphism $\widehat{\mathcal{X}} \to \mathcal{X}$  which is a deformation of the morphism $\hX \to X$ (possibly after shrinking $\Delta$).    For $t\in \Delta$, $t\neq 0$, the fiber $\mathcal{X}_t$ over $t$ is smooth and the corresponding degree one morphism $\widehat{\mathcal{X}}_t \to \mathcal{X}_t$ contracts $\mathcal{E}_t$ to a point. However, since $H^{n-1}(\mathcal{E}_t, \scrO_{\mathcal{E}_t}) = H^{n-1}(E; \scrO_E) \neq 0$, this is not possible. Hence $X$ is not smoothable.
\end{proof}

\begin{remark}  There is a more algebraic version of the above argument in case $X=\Spec R$ is affine and hence $\hX$ is a scheme. Suppose that there is a one parameter smoothing of $X$ corresponding to a formal deformation $\mathfrak{X} $. By the above discussion there is then a formal deformation $(\widehat{\mathfrak{X} },  \mathfrak{E} )$ of the pair $(\hX, E)$  and a proper morphism of formal schemes $\widehat{\mathfrak{X} } \to \mathfrak{X}$. By a theorem of Elkik, there exists a scheme $\mathfrak{X}^{\text{\rm{alg}}}=\Spec A$ over $\Spec \Cee[[t]]$ inducing the formal scheme $ \mathfrak{X}$ and such that $\mathfrak{X}^{\text{\rm{alg}}}\times_{\Spec \Cee[[t]]}\Spec \Cee((t))$ is regular. After replacing $A$ by its $(t)$-adic completion $A\,\hat{}$,   the pair  $(\widehat{\mathfrak{X} },  \mathfrak{E} )$ is then algebraizable over $A\,\hat{}$ by Grothendieck's existence theorem \cite[Th\'eor\`eme 5.4.5]{EGAIII} or \cite[\href{https://stacks.math.columbia.edu/tag/089A}{Tag 089A}]{stacks-project} because the sheaf $\scrO_{\hX}(-E)$ is relatively ample for the morphism $\hX \to X$. Then we reach a contradiction in much the same way as before. (We  are grateful to Johan de Jong for sketching a proof of  this argument to us.)
\end{remark} 

\begin{example} Let $X$ be the germ (at the vertex)  of the cone over a quartic $K3$ surface $E$ embedded in projective space via the line bundle $\scrO_E(d)$, where $\scrO_E(1)$ corresponds to the embedding of $E$ in $\Pee^3$.  It follows from \cite[Example 5.12 and Remark 5.13]{SVV} that, if $d \ge 5$, then $H^1(\hX; \Omega^1_{\hX}(\log E)(-E)) = 0$ and in fact   $X$ is $1$-Du Bois. Note however that $X$ is not a rational singularity (cf.\ the proof of Theorem~\ref{maintheorem}(vii) where the lci assumption is crucial). By Theorem~\ref{nonsmooththm},  the germ  $X$ is not smoothable, recovering some of the examples of \cite{CS}. Similar arguments using \cite[Proposition 5.10]{SVV} give nonsmoothable examples in all dimensions $\ge 3$. 
\end{example}

\section{The global case}\label{section3} 

Throughout this section,  $Y$ denotes a compact analytic variety  with isolated singularities, and  $Z = Y_{\text{\rm{sing}}}$. Let  $\pi\colon \hY \to Y$ be  a good equivariant    resolution with exceptional divisor $E$, and set  $V = Y-Z = \hY -E$.  We will also freely use the global analogue of a result of Schlessinger: if $Y$ has depth at least $3$ at each singular point, then $\mathbb{T}^1_Y = H^1(V, T_V) $ \cite[Lemma 4.1]{FL}. 

\subsection{Preliminaries} We begin by defining the class of varieties for which our results will apply.

\begin{definition}\label{definestrict} A \textsl{strict $0$-liminal  Calabi-Yau variety} of dimension $n\ge 3$ is a compact Gorenstein analytic space of dimension $n$ with a unique singular point $x$,  satisfying the following: 
\begin{enumerate}
\item[(i)]   $\omega_Y \cong \scrO_Y$.
\item[(ii)] $Y$ has a unique singular point, which is a $0$-liminal local complete intersection singularity; 
\item[\rm(iii)] $H^i(Y; \scrO_Y) = 0$ for $0< i< n$.
\item[\rm(iv)] There exists a  log resolution $\pi\colon \hY \to Y$  satisfying the $\partial\bar\partial$-lemma.
 \item[\rm(v)] There exists  a log resolution   $\pi\colon \hY \to Y$ such that  $H^i(\hY; \scrO_{\hY}) = 0$   for all $i> 0$.  
\end{enumerate}
\end{definition}

\begin{remark}\label{rmkstrict} (i) It is easy to see that if (iv) and (v) hold for one log resolution, then they hold for all log resolutions.

\smallskip

\noindent (ii) Suppose that $Y$ is a compact analytic space satisfying (i)   and (iv), such that $Y$ has isolated Du Bois singularities,  and that  there exists a smoothing of $Y$ to a strict Calabi-Yau manifold $Y_t$ (i.e.\ $H^i(Y_t; \scrO_{Y_t}) = 0$ for $0< i< n$.  By a theorem of Du Bois \cite[Theorem 4.6]{duBois} (cf.\ also Du Bois-Jarraud \cite{DBJ}),   $Y$ satisfies (iii) above as well. More generally, assume that the singularities of $Y$ are  Du Bois but not necessarily isolated and that $Y$ is projective, or has a K\"ahler resolution, so that (iv) is automatic.  If there exists a smoothing of $Y$ to a strict Calabi-Yau manifold $Y_t$ as above, then (iii) holds as well.

\smallskip

\noindent (iii) If $Y$ satisfies   (ii) above, then by Lemma~\ref{lemma1.1} and Theorem~\ref{Thm1.1},  $R^i\pi_*\scrO_{\hY} = 0$ for $0< i < n-1$ and $R^{n-1}\pi_*\scrO_{\hY} $ has length $1$. In particular, the only possible nonzero differential in the Leray spectral sequence $H^p(Y; R^q\pi_*\scrO_{\hY}) \implies H^{p+q}(\hY; \scrO_{\hY})$ is
$$d_n \colon H^0(Y;  R^{n-1}\pi_*\scrO_{\hY})\to H^n(Y; \scrO_Y).$$
Here both sides are one-dimensional vector spaces. Thus,   (v) is equivalent to the condition that  $d_n \colon H^0(Y;  R^{n-1}\pi_*\scrO_{\hY})\to H^n(Y; \scrO_Y)$ is an isomorphism. 

 \smallskip

\noindent (iv) For simplicity in this paper, we stick to the case of a unique singular point. It would certainly be of interest to allow additional singular points which are rational. Allowing more than one nonrational singular point might be of somewhat more limited interest. For example, in dimension $2$, a generalized Calabi-Yau surface $Y$ can have at most two $0$-liminal singularities. In fact, we have the following:
 \end{remark} 

\begin{lemma}\label{classdim2}  Let $Y$ be a normal compact analytic surface with $\omega_Y \cong \scrO_Y$,    $k>0$ simple elliptic or cusp singularities,  and all remaining singularities rational double points. Then $k \le 2$. Moreover, if $\hY$ is  the minimal resolution  of $Y$ and  $\hY$   is algebraic, then: 
\begin{enumerate}
\item[\rm(i)]   $\hY$ is a rational surface $\iff$  $k=1$. 
\item[\rm(ii)]   $\hY$ is a (nonminimal) elliptic ruled surface $\iff$  $k=2$, and in this case both singularities are simple elliptic singularities. 
\end{enumerate}
\end{lemma}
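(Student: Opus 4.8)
The plan is to work on the minimal resolution $\pi\colon\hY\to Y$ and read off everything from the canonical bundle formula. Since $\omega_Y\cong\scrO_Y$ we have $K_Y\sim 0$; the rational double points are crepant, while each simple elliptic or cusp point contributes its reduced exceptional divisor with discrepancy $-1$. Writing $E=\bigsqcup_{j=1}^kE^{(j)}$ for the disjoint reduced exceptional divisors over the $k$ nonrational points, this gives $K_{\hY}=\pi^*K_Y-E=-E$, i.e.\ $\scrO_{\hY}(-E)=K_{\hY}$. By adjunction $\omega_{E^{(j)}}=(K_{\hY}+E)|E^{(j)}=\scrO_{E^{(j)}}$, so each component has arithmetic genus one and $h^1(\scrO_{E^{(j)}})=h^0(\omega_{E^{(j)}})=1$; since each $E^{(j)}$ is contractible its intersection form is negative definite, so $(E^{(j)})^2<0$ and $K_{\hY}^2=E^2<0$. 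Finally, as $-K_{\hY}=E$ is effective and nonzero, no positive multiple of $K_{\hY}$ is effective, whence $\kappa(\hY)=-\infty$, $p_g(\hY)=h^0(K_{\hY})=0$, and $H^2(\hY;\scrO_{\hY})\cong H^0(\hY;K_{\hY})^{\vee}=0$.

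The second ingredient is the long exact cohomology sequence of $0\to K_{\hY}\to\scrO_{\hY}\to\scrO_E\to 0$. Using $h^0(\scrO_E)=h^1(\scrO_E)=k$, $H^2(\scrO_{\hY})=0$, and Serre duality, it reads
$$0\to\Cee\to\Cee^k\to H^1(\hY;\scrO_{\hY})^{\vee}\to H^1(\hY;\scrO_{\hY})\xrightarrow{\ \rho\ }\Cee^k\to\Cee\to 0.$$
From it I would extract two facts that drive the classification: (a) the kernel of $H^0(\scrO_E)\to H^1(K_{\hY})$ is the one-dimensional image of $H^0(\scrO_{\hY})$, so $k-1\le h^1(\hY;\scrO_{\hY})$; and (b) the restriction $\rho\colon H^1(\hY;\scrO_{\hY})\to H^1(\scrO_E)$ has rank exactly $k-1$, because $\im\rho$ is the kernel of the surjection $H^1(\scrO_E)=\Cee^k\to H^2(K_{\hY})=\Cee$.

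Assuming now $\hY$ algebraic, $\kappa(\hY)=-\infty$ forces $\hY$ to be rational or ruled over a smooth curve $B$ of genus $g=h^1(\hY;\scrO_{\hY})$, with a ruling $f\colon\hY\to B$ whose general fibre $F\cong\Pee^1$ satisfies $K_{\hY}\cdot F=-2$, hence $E\cdot F=2$. No $E^{(j)}$ can be vertical: a connected vertical curve lies in a single fibre, and fibres have arithmetic genus $0$, so their connected subcurves have $p_a\le 0\neq 1$. Thus every $E^{(j)}$ meets $F$, and since the total horizontal intersection number equals $E\cdot F=2$ we obtain $k\le 2$. For the dichotomy I split on $k$. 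If $k=2$, then (a) gives $g\ge 1$, so $\hY$ is nonrational; each $E^{(j)}$ carries a single horizontal component, a section $D_j\cong B$ of genus $g\ge 1$, which cannot be rational, so no $E^{(j)}$ is a cusp; hence both are simple elliptic, $g=1$, $\hY$ is elliptic ruled, and it is nonminimal since a geometrically ruled surface over an elliptic curve would have $K^2=8(1-g)=0\neq E^2$. If $k=1$, then (b) says $\rho=0$, i.e.\ $H^1(\scrO_{\hY})$ restricts to $0$ on $E^{(1)}$; but if $g\ge 1$ and $E^{(1)}$ had a component $D$ dominating $B$, the composite $H^1(\scrO_B)\xrightarrow{f^*}H^1(\scrO_{\hY})\xrightarrow{\rho}H^1(\scrO_{E^{(1)}})$ would be a nonzero pullback (a rational $D$ would already force $g=0$, while for a simple elliptic $E^{(1)}=D$ the pullback along the degree-two map $D\to B$ of elliptic curves is injective), contradicting $\rho=0$ unless $g=0$. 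So $k=1$ forces $\hY$ rational. As the rational ($g=0$) and elliptic-ruled ($g=1$) cases are mutually exclusive and $k\in\{1,2\}$, the biconditionals (i) and (ii) follow.

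The step I expect to be most delicate is matching the number $k$ with the base genus and excluding the mismatched configurations — a $k=1$ singularity sitting over an elliptic base, or a cusp appearing when $k=2$ — which is precisely where fact (b), the prescribed rank (forced vanishing when $k=1$) of the restriction map $\rho$, is indispensable and must be combined carefully with the Hurwitz/pullback behaviour of the horizontal components. The other genuine gap is the unconditional bound $k\le 2$: my argument produces it through the ruling, so to reach it without the algebraicity hypothesis one must first invoke the Enriques–Kodaira classification to rule out (or analyze) the non-Kähler $\kappa=-\infty$ surfaces of class VII and thereby guarantee that $\hY$ is ruled; granting that, the bookkeeping above applies verbatim.
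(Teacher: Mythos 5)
Your overall strategy coincides with the paper's: write $K_{\hY}=\scrO_{\hY}(-E)$ on the minimal resolution, conclude $\kappa(\hY)=-\infty$, invoke the classification of surfaces, and then analyze the ruling. Two points deserve comment, one to your credit and one a genuine gap. To your credit: the implication ``$\hY$ elliptic ruled $\Rightarrow k=2$'' (equivalently, excluding $k=1$ with $E$ a smooth elliptic anticanonical bisection) is exactly the step the paper declines to prove (``we omit the proof''), and your fact (b) supplies it cleanly. When $k=1$ the long exact sequence of $0\to K_{\hY}\to\scrO_{\hY}\to\scrO_E\to 0$ forces the restriction $\rho\colon H^1(\hY;\scrO_{\hY})\to H^1(E;\scrO_E)$ to vanish, while for a finite degree-two map of elliptic curves $f|_E\colon E\to B$ the composite $H^1(B;\scrO_B)\xrightarrow{f^*}H^1(\hY;\scrO_{\hY})\xrightarrow{\rho}H^1(E;\scrO_E)$ is injective (the normalized trace splits $\scrO_B\to (f|_E)_*\scrO_E$, and $f_*\scrO_{\hY}=\scrO_B$ makes $f^*$ injective); this is a contradiction, so $g=0$. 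That is a correct and genuinely new ingredient relative to the paper. (A minor omission: for the ruling argument to cover all rational surfaces you should dispose of $\hY\cong\Pee^2$, which cannot occur since it contains no contractible divisor; every other rational surface admits a ruling.)

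The gap: the bound $k\le 2$ in the lemma is asserted unconditionally, whereas your argument obtains it only through the existence of a ruling, i.e.\ only for algebraic $\hY$; you flag the class VII case but do not treat it, so as written the first claim is not fully established. The paper closes this case via the Leray computation $\chi(\scrO_{\hY})=\chi(\scrO_Y)-k\le 2-k$ together with $\chi(\scrO_{\hY})=0$ for class VII surfaces. In fact your own inequality (a), namely $k-1\le h^1(\hY;\scrO_{\hY})$, is equivalent to $\chi(\scrO_{\hY})\le 2-k$ (since $p_g(\hY)=0$) and closes the gap at once: a class VII surface has $b_1=1$, hence $h^{0,1}=1$, hence $k\le 2$. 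So the missing step is a one-line application of a fact you already proved, but it does need to be said.
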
 
\begin{proof}    By assumption  $K_{\hY} = \scrO_{\hY}(-\sum_{i=1}^kE_i)$, where the $E_i$ are the exceptional fibers of the elliptic singularities in the resolution $\hY \to Y$. Since $k\ge 1$,  the Kodaira dimension $\kappa(\hY)$ is $-\infty$. By the classification of compact analytic surfaces,   $\hY$ is rational, ruled over a curve of genus $g\ge 1$, or of Type VII.  A Leray spectral sequence argument implies that $\chi(\scrO_{\hY}) = \chi(\scrO_Y)-k$. Since $ \chi(\scrO_Y) \le 2$, $\chi(\scrO_{\hY})  \le 2-k \le 1$, and $\chi(\scrO_{\hY}) \le 0$ if $k \ge 2$. In the Type VII case, $\chi(\scrO_{\hY}) = 0$ and hence $k\le 2$. If $\hY$ is ruled over a curve of genus $g\ge 1$, then $\chi(\scrO_{\hY}) = 1-g$. Since $\hY$ contains the curves $E_i$, if it is ruled over a curve $C$ of genus $g\ge 1$, then all singularities are simple elliptic, $g=1$,  the morphisms $E_i \to C$ must be   isogenies, and $\chi(\scrO_{\hY}) =0$. Thus $k \le 2$ in this case as well. A more careful analysis shows that it is not possible for there to be a smooth bisection $\Gamma$ of the morphism $\hY \to C$ such that $K_{\hY} =\scrO_{\hY}(-\Gamma)$ (though it is possible for $-K_{\hY}$ to be numerically equivalent to a smooth bisection); we omit the proof. Thus, if $\hY$ is an elliptic ruled surface, then $k=2$. If $\hY$ is rational, then $\chi(\scrO_{\hY}) = 1$. Hence $k\le 1$ in this case, and so  $k=1$.
\end{proof} 

\begin{remark}  (i) In  the situation of Lemma~\ref{classdim2} , if the minimal resolution of $Y$ is a   Hirzebruch-Inoue surface and hence is not algebraic, then $k=2$ and   both of the singularities are cusps. While this case is interesting from the viewpoint of smoothing cusp singularities,  surfaces $Y$ with $\hY$ algebraic and $k=2$  are somewhat more peripheral to the study of degenerations of $K3$ surfaces than  Calabi-Yau surfaces with one $0$-liminal  singularity, whose minimal resolutions are rational surfaces. Thus, the assumption of a unique $0$-liminal singularity seems reasonable in general.    (See however \cite[Example 1.14, Example 1.20]{FG24} for an analysis of the deformation theory in the case where $Y$ has two simple elliptic singularities and $\hY$ is an elliptic ruled surface.)

 \smallskip

\noindent (ii) One can  construct generalized Calabi-Yau varieties $Y$  in all dimensions $n \ge 2$ with  two $0$-liminal singularities, by starting with a ruled variety $R=\Pee(\scrO_Z\oplus L)$ over a strict Calabi-Yau variety $Z$ of dimension $n-1$ and making appropriate blowups along smooth subvarieties of the two distinguished sections  $W_1$, $W_2$ so that the proper transforms of the sections become exceptional. For example, if $L$ is ample, then one of the sections, say $W_1$, is already exceptional. To deal with $W_2$, let $A$ be a smooth hypersurface in $W_2$ and let $\hY\to R$ be the blowup of the  codimension two subvariety $A$ of $R$. Then the normal bundle of the proper transform $W_2'$ of $W_2$ in $\hY$ satisfies: $N_{W_2'/\hY} \cong N_{W_2/R} \otimes \scrO_{W_2}(-A)$ (under the natural identification of $W_2'$ and $W_2$).  Thus, as long as $A$ is sufficiently ample, $N_{W_2'/\hY}$ is the dual of an ample line bundle on $W_2'$ and hence by Grauert's criterion the divisor $W_2'\subseteq \hY$ can be contracted to a point. By Lemma~\ref{newlemma1.16} and Lemma~\ref{goodisstrict},  if  $Y$ denotes the normal analytic space obtained by contracting the two divisors  $W_1, W_2' \subseteq \hY$ to points, then the two singular points of $Y$ are Gorenstein and $0$-liminal. With a little more care, one can also arrange for projective generalized Calabi-Yau varieties $Y$   with  two $0$-liminal singularities.
\end{remark}

We have the following lemma on $\mathbb{T}^0_Y = H^0(Y; T^0_Y)$, the tangent space to $\Aut Y$:

\begin{lemma}\label{T0} If $Y$ is just assumed to have isolated local complete intersection singularities and $\pi\colon \hY \to Y$ is an equivariant resolution, then $H^0(V; T_V) =  H^0(Y; T^0_Y) \cong H^0(\hY; T_{\hY})$. 
\end{lemma}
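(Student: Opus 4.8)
The plan is to split the statement into its two asserted isomorphisms and to reduce each to a single clean input: the first to a depth (Serre $S_2$) estimate on the tangent sheaf $T^0_Y$, the second to the defining property of an equivariant resolution.

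First I would prove $H^0(V;T_V) = H^0(Y;T^0_Y)$. Writing $Z = Y_{\text{\rm sing}}$ for the finite set of singular points, $V = Y - Z$ is the smooth locus and $\pi$ is an isomorphism over $V$, so there are tautological identifications $T^0_Y|_V = T_V = T_{\hY}|_V$ and a restriction map $H^0(Y;T^0_Y)\to H^0(V;T_V)$; it is injective because $T^0_Y$ is torsion-free and $V$ is dense. Surjectivity is a Hartogs-type extension across $Z$, and the key point is that $T^0_Y = \mathcal{H}om(\Omega^1_Y,\scrO_Y)$ satisfies $S_2$. This is exactly where the lci hypothesis is used: an lci singularity is Cohen-Macaulay, so $\scrO_Y$ has depth $n$ at each point of $Z$; choosing a local presentation $\scrO_Y^a\to\scrO_Y^b\to\Omega^1_Y\to 0$ and dualizing exhibits $T^0_Y$ as the kernel of a map $\scrO_Y^b\to\scrO_Y^a$, and the standard depth estimate for such a kernel gives $\operatorname{depth}_xT^0_Y \ge \min\{2,\operatorname{depth}_x\scrO_Y\} = 2$ for every $x\in Z$. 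Since $Z$ is finite, this forces $H^0_Z(Y;T^0_Y) = H^1_Z(Y;T^0_Y) = 0$, and the long exact local cohomology sequence then yields the desired isomorphism $H^0(Y;T^0_Y)\cong H^0(Y-Z;T^0_Y) = H^0(V;T_V)$.

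Next I would deduce $H^0(Y;T^0_Y)\cong H^0(\hY;T_{\hY})$ directly from the equivariance hypothesis $\pi_*T_{\hY} = T^0_Y$ (see the definition of an equivariant resolution in \S\ref{section1}). This sheaf equality is local on $Y$: over $V$ it is the identification already noted, while at each singular point it is precisely the condition defining an equivariant resolution. Passing to global sections and using $H^0(\hY;T_{\hY}) = H^0(Y;\pi_*T_{\hY})$ yields the isomorphism, and chaining the two steps produces the full chain $H^0(V;T_V) = H^0(Y;T^0_Y)\cong H^0(\hY;T_{\hY})$.

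I expect the only substantive step to be the $S_2$ property of $T^0_Y$ in the first part; the equivariance-based second isomorphism is essentially formal once $\pi_*T_{\hY} = T^0_Y$ is granted. The one point requiring care is to run the syzygy and depth argument in the analytic category, using coherence of $\Omega^1_Y$ and the local lci structure rather than a schematic presentation, but this is routine.
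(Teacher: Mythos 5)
Your proposal is correct and follows essentially the same route as the paper: the first identification comes from the depth-$\ge 2$ property of $T^0_Y$ (which the paper simply cites from \cite[Lemma 1.15]{FL}, whereas you supply the standard syzygy/local-cohomology argument), and the second is the formal consequence of $\pi_*T_{\hY} = T^0_Y$ together with $H^0(\hY;T_{\hY}) = H^0(Y;\pi_*T_{\hY})$. The only quibble is that attributing the depth bound specifically to the lci hypothesis is slightly stronger than needed (normality of $Y$ already suffices for $\operatorname{depth}\ge 2$ of the reflexive sheaf $T^0_Y$), but your Cohen--Macaulay route is perfectly valid.
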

\begin{proof} Since $T^0_Y$ has depth at least $2$ by \cite[Lemma 1.15]{FL}, $H^0(V; T_V) =  H^0(Y; T^0_Y)$. By assumption, $\pi_*T_{\hY} = T^0_Y$, hence $H^0(Y; T^0_Y) \cong H^0(Y;  \pi_*T_{\hY}) \cong H^0(\hY; T_{\hY})$ by the Leray spectral sequence.
\end{proof} 

We also note the following definition from  \cite[Definition 1.10]{FL}:

\begin{definition}\label{defstsm} Let $(X,x)$ be the germ of an isolated hypersurface singularity, so that $T^1_{X,x} \cong \scrO_{X,x}/J$ is a cyclic $\scrO_{X,x}$-module. An element  $\theta\in T^1_{X,x}$ is a \textsl{first order smoothing} if $\theta\notin \mathfrak{m}_xT^1_{X,x}$. If $Y$ is a compact analytic space with only isolated hypersurface singularities, then a first order deformation $\theta\in \mathbb{T}^1_Y$ is a \textsl{first order smoothing} if the image $\theta_x$ of $\theta$ in $T^1_{Y,x}$ is a  first order smoothing for every $x\in Y_{\text{\rm{sing}}}$. A standard argument (e.g.\ \cite[Lemma 1.9]{FL}) shows that,  if  $f\colon \mathcal{Y}\to \Delta$ is a deformation of $Y$ over the disk, then its Kodaira-Spencer class $\theta$ is a  first order smoothing $\iff$ $\mathcal{Y}$ is smooth.  In particular, in this case,   the nearby fibers $Y_t =f^{-1}(t)$, $0< |t| \ll 1$, are smooth.
\end{definition} 

\subsection{Global geometry of a resolution}\label{subsection3.2}   Now assume that $Y$ is a compact analytic variety  with isolated Du Bois singularities and that $\pi \colon \hY\to Y $ is a good resolution.  As noted in the proof of Theorem~\ref{maintheorem}(i), by the Du Bois condition, $H^1_E(\hY; \Omega^{n-1}_{\hY}(\log E)) = 0$.  Thus there is an exact sequence
$$ 0\to  H^1(\hY; \Omega^{n-1}_{\hY}(\log E)) \to  H^1(V; T_V) \to H^2_E(\hY; \Omega^{n-1}_{\hY}(\log E)) \to  H^2(\hY; \Omega^{n-1}_{\hY}(\log E)).$$
 If in addition the singularities are lci, then $H^2_E(\hY; \Omega^{n-1}_{\hY}(\log E))\cong H^2_E(\hX; \Omega^{n-1}_{\hX}(\log E))$ has dimension $b^{1, n-2} = s_1$ by  Theorem~\ref{maintheorem}(i).

To analyze the subspace $H^1(\hY; \Omega^{n-1}_{\hY}(\log E))$, we first consider for simplicity  the case where $E$ is smooth. In this case we have the Poincar\'e residue sequence 
\begin{equation}\label{2.99}
0 \to \Omega^{n-1}_{\hY} \to \Omega_{\hY}^{n-1}(\log E) \to \Omega^{n-2}_E \to 0.
\end{equation}
For the map on $H^1$ terms we get  
$$H^0(E; \Omega^{n-2}_E) \xrightarrow{\gamma} H^1(\hY; \Omega^{n-1}_{\hY} ) \to H^1(\hY;\Omega_{\hY}^{n-1}(\log E)) \to H^1(E; \Omega^{n-2}_E) \xrightarrow{\gamma}   H^2(\hY;\Omega^{n-1}_{\hY}).
$$
Here  $\gamma \colon H^i(E; \Omega^{n-2}_E) \to H^{i+1}(\hY;\Omega^{n-1}_{\hY})$ is the Gysin map.  In particular, there is an exact sequence 
$$0\to H^1(\hY; \Omega^{n-1}_{\hY} )/\im \gamma  \to H^1(\hY;\Omega_{\hY}^{n-1}(\log E)) \to \Ker \gamma \to 0.$$
The kernel of $\gamma$ on  $H^1(E; \Omega^{n-2}_E)$ is the annihilator of the image of the dual map $\iota^*\colon H^{n-2}(\hY;\Omega^1_{\hY}) \to H^{n-2}(E; \Omega^1_E)$.  Moreover, 
 there is an exact sequence
$$0 \to \Coker \gamma \to  H^2(\hY;\Omega_{\hY}^{n-1}(\log E)) \to \Ker\{ H^2(E; \Omega^{n-2}_E) \to  H^3(\hY;\Omega^{n-1}_{\hY})\}.$$
In case $E=\bigcup_iE_i$ is a union of divisors with normal crossings, the weight spectral sequence gives   similar exact sequences, where for example we replace the kernel of $\gamma$ on  $H^1(E; \Omega^{n-2}_E)$ by
$$\Ker\left\{\bigoplus_iH^1(E_i; \Omega^{n-2}_{E_i}) \to  H^2(\hY;\Omega^{n-1}_{\hY})\right\}\Big/ \im\left\{\bigoplus_{\#(I) = 2}H^0(E_I; \Omega^{n-3}_{E_I}) \to \bigoplus_iH^1(E_i; \Omega^{n-2}_{E_i})\right\},$$
where the $E_I$ are as defined in \S\ref{The exceptional divisor} and the maps are Gysin maps or alternating sums of Gysin maps.

\subsection{Unobstructedness in dimension $3$} Our goal here is to show that the unobstructedness theorem of Namikawa~\cite[Theorem 1]{namtop} for dimension $3$ holds also in the case of $0$-liminal singularities satisfying Definition~\ref{definestrict} along with a mild additional assumption. 

\begin{theorem}\label{dim3unob} Suppose that $Y$ is a strict $0$-liminal Calabi-Yau threefold   and   $H^0(Y; T^0_Y) = 0$, i.e.\   $\Aut Y$ is discrete. Then $\mathbf{Def}_Y$ is unobstructed.
\end{theorem}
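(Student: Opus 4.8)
The plan is to establish unobstructedness via the $T^1$-lifting criterion of Ran and Kawamata, following the strategy of Namikawa~\cite{namtop}. Recall that $\mathbf{Def}_Y$ is smooth provided that, for every $n \geq 0$ and every deformation $\mathcal{Y}_n \to \Spec A_n$ of $Y$ over $A_n = \Cee[t]/(t^{n+1})$, the relative first-order deformation space $T^1_{\mathcal{Y}_{n+1}/A_{n+1}}$ surjects onto $T^1_{\mathcal{Y}_n/A_n}$ under restriction along $A_{n+1} \to A_n$. Since $Y$ has an isolated lci singularity of depth $3$, Schlessinger's isomorphism identifies the relative $T^1$ with $H^1(\mathcal{V}_n; T_{\mathcal{V}_n/A_n})$, where $\mathcal{V}_n \subseteq \mathcal{Y}_n$ is the smooth locus; and the Calabi-Yau condition $\omega_Y \cong \scrO_Y$, which propagates to the family, lets me rewrite this via contraction with the trivializing section of the relative dualizing sheaf as $H^1(\mathcal{V}_n; \Omega^{n-1}_{\mathcal{V}_n/A_n})$ with $n = 3$.

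The heart of the argument is to show that this group is a free $A_n$-module, so that base change along $A_{n+1} \to A_n$ is surjective. First I would work with the global exact sequence of \S\ref{subsection3.2}, which expresses $H^1(V; T_V)$ as an extension of a boundary term inside $H^2_E(\hY; \Omega^{n-1}_{\hY}(\log E))$ by the interior piece $H^1(\hY; \Omega^{n-1}_{\hY}(\log E))$. By Theorem~\ref{maintheorem}(i) the boundary term has dimension $b^{1,n-2} = s_1$, a purely local invariant of the singularity; it accounts for the local smoothing directions, which lift freely because the lci singularity has unobstructed local deformation theory (local $\mathbb{T}^2 = 0$). For the interior piece I would invoke the $\partial\bar\partial$-lemma hypothesis (Definition~\ref{definestrict}(iv)): this forces $E_1$-degeneration of the relative Frölicher spectral sequence on the resolution, so that $H^1(\hY; \Omega^{n-1}_{\hY}(\log E))$ is realized as a graded piece $\Gr_F^{n-1}$ of a mixed Hodge structure whose Hodge numbers are deformation-invariant, yielding local freeness of the corresponding Hodge bundle in the family.

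Two further inputs are needed. The vanishing $H^i(Y; \scrO_Y) = 0$ for $0 < i < n$ together with $H^i(\hY; \scrO_{\hY}) = 0$ for $i > 0$ (Definition~\ref{definestrict}(iii),(v)) pins down the auxiliary cohomology, so that the Euler-characteristic computation yields genuine constancy of $\dim T^1$ rather than mere semicontinuity. The hypothesis $H^0(Y; T^0_Y) = 0$, i.e.\ that $\Aut Y$ is discrete, enters precisely here: by Lemma~\ref{T0} it gives $H^0(\hY; T_{\hY}) = H^0(V; T_V) = 0$, removing the $H^0$ term that would otherwise obstruct the base-change comparison and force the rank of $T^1$ to drop along $A_{n+1} \to A_n$. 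Finally, Namikawa's Lemma~\ref{namlemma} is used to control the Picard/Chern-class contribution, ensuring that the part of the interior piece arising from line bundles on $\hY$ injects into $H^1(\hY; \Omega^1_{\hY})$ and hence varies holomorphically in step with the Hodge filtration.

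The main obstacle I anticipate is the second step: proving that the interior Hodge-theoretic piece is genuinely $A_n$-free in the family, since the total space $\mathcal{Y}_n$ is itself singular and the singularity may partially smooth, so one cannot directly apply relative Hodge theory for a smooth proper family. Resolving this requires carrying out the degeneration argument on a simultaneous (possibly only infinitesimal) resolution of the pair and transferring the local freeness across the resolution morphism by means of the vanishing results of \S\ref{section1}; reconciling this with the mixed Hodge structure on $H^n(V)$ supplied by Theorem~\ref{maintheorem}, and verifying that the $\partial\bar\partial$-lemma persists in the relative setting, is where the real difficulty lies.
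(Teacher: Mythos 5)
Your overall strategy is the same as the paper's: both rest on the $T^1$-lifting argument of Namikawa. The difference is that the paper does not re-run that argument at all. It observes that Namikawa's proof of \cite[Theorem 1]{namtop} applies verbatim to a compact threefold with isolated lci singularities once three conditions are checked: (1) $H^1(Y;\scrO_Y)=H^2(Y;\scrO_Y)=0$, (2) the injectivity of $\delta\otimes 1\colon H^0(Y;R^1\pi_*\scrO_{\hY}^*)\otimes_{\Zee}\Cee\to H^0(Y;R^1\pi_*\Omega^1_{\hY})$, and (3) $H^0(Y;T^0_Y)=0$. Condition (1) is Definition~\ref{definestrict}(iii), condition (3) is the stated hypothesis, and condition (2) is Lemma~\ref{namlemma} --- the one genuinely new input in the $0$-liminal setting, since the analogue in \cite{namtop} is proved for rational singularities and the paper has to rework it via the Mayer--Vietoris analysis of $\Pic E$. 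You correctly identify all three inputs and roughly where they enter Namikawa's mechanism.

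As a standalone proof, however, your proposal has a gap that you yourself flag: the freeness over $A_n$ of $H^1(\mathcal{V}_n;\Omega^2_{\mathcal{V}_n/A_n})$ (equivalently, surjectivity of the base-change maps) is precisely the content of Namikawa's argument, and your proposed route to it is not carried out and would not work as described. The decomposition of $H^1(V;T_V)$ into the ``interior'' piece $H^1(\hY;\Omega^{2}_{\hY}(\log E))$ and a ``boundary'' piece of dimension $b^{1,1}=s_1$ coming from Theorem~\ref{maintheorem}(i) and \S\ref{subsection3.2} is a statement about the central fiber only; nothing there upgrades it to an $A_n$-module splitting of the relative $T^1$, so the claim that the two summands ``lift freely'' separately (one by local unobstructedness, one by a relative $\partial\bar\partial$-lemma on a simultaneous resolution) is unsupported --- and a simultaneous resolution of the family need not exist once the singularity starts to smooth. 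Namikawa's actual proof works directly with $\Omega^2_{\mathcal{Y}_n/A_n}$ on the smooth locus of the family and uses condition (2) to control the contribution of line bundles on the resolution; if you want a self-contained argument you must reproduce that mechanism rather than the central-fiber Hodge-theoretic decomposition. The paper's proof is correct and short exactly because it treats Namikawa's proof as a black box and only verifies its hypotheses.
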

\begin{proof} The proof of \cite[Theorem 1]{namtop} shows that, if the singularities of $Y$ are local complete intersections,  then $\mathbf{Def}_Y$ is unobstructed provided that the following hold:
\begin{enumerate}
\item $H^1(Y;\scrO_Y) = H^2(Y;\scrO_Y) = 0$.
\item The map
$$H^0(Y; R^1\pi_*\scrO_{\hY}^*)\otimes_{\Zee}\Cee  \xrightarrow{\delta\otimes 1} H^0(Y; R^1\pi_*\Omega^1_{\hY})$$
is injective. 
\item $H^0(Y; T^0_Y) = 0$, or equivalently (by Serre duality) $H^3(Y; \Omega^1_Y) = 0$.
\end{enumerate}
Here,  (1) is Condition (iii) in Definition~\ref{definestrict}, (2) is Lemma~\ref{namlemma}, and (3) is one of the  hypotheses. Thus $\mathbf{Def}_Y$ is unobstructed. 
\end{proof}

\begin{remark} The proof of Theorem~\ref{dim3unob} goes through under slightly weaker hypotheses than the assumption that $Y$ is a strict $0$-liminal Calabi-Yau threefold. \end{remark}

\subsection{Local versus global: constructing first order smoothings} For the rest of this section, we assume that $Y$ is a strict $0$-liminal Calabi-Yau variety.   Let $X$ be a contractible Stein neighborhood of the singular point of $Y$ and let $\pi\colon \hY \to Y$ be a good equivariant resolution.  Setting $\hX=\pi^{-1}(X)$, we also denote the resolution $\hX \to X$ by $\pi$.
 The main technical result is the following:

\begin{lemma}\label{lemma2.3}  There is an induced map $H^2_E(\hY; \Omega^{n-1}_{\hY}) \to H^{n+1}(\hY)$. Via the excision isomorphism $H^2_E(\hY; \Omega^{n-1}_{\hY})  \cong H^2_E(\hX; \Omega^{n-1}_{\hX}) $, the subspace $K'$ of Theorem~\ref{maintheorem}(vi) maps to $0$ in $H^2(\hY; \Omega^{n-1}_{\hY})$.
\end{lemma}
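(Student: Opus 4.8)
The plan is to transplant the local argument for Theorem~\ref{maintheorem}(vi) to the compact resolution $\hY$, replacing the local injectivity used there (via \cite[Lemma 2.3(i)]{FL}) by the injectivity of the Hodge-theoretic edge map on $\hY$, which is where the global hypotheses of Definition~\ref{definestrict} enter. First I would construct the map $H^2_E(\hY; \Omega^{n-1}_{\hY}) \to H^{n+1}(\hY)$ by running the proofs of Theorem~\ref{maintheorem}(v),(vi) with the analogue of the exact sequence~(\ref{1.00}) on $\hY$. The only point needing the global data is that $H^1(\hY; K_{\hY}) = H^2(\hY; K_{\hY}) = 0$: by Definition~\ref{definestrict}(v) and Serre duality, $H^i(\hY; K_{\hY}) \cong H^{n-i}(\hY; \scrO_{\hY})^{\vee} = 0$ for $0 < i < n$, and $n \ge 3$. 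The long exact hypercohomology sequence of~(\ref{1.00}) then gives $H^{n+1}(\hY) \cong \mathbb{H}^{n+1}(\hY; \Omega^\bullet_{\hY}/\Omega^n_{\hY}[n])$, an edge homomorphism $H^2_E(\hY; \Omega^{n-1}_{\hY}) \to H^{n+1}_E(\hY)/\im H^1_E(\hY; \Omega^n_{\hY})$, and the forget-support map $H^{n+1}_E(\hY) \to H^{n+1}(\hY)$; the composite is the desired arrow (the forget-support map kills $\im H^1_E(\hY; \Omega^n_{\hY})$, whose image in $H^{n+1}(\hY)$ factors through $H^1(\hY; K_{\hY}) = 0$).

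Next I would assemble the commutative square
$$\begin{CD}
H^2_E(\hY; \Omega^{n-1}_{\hY}) @>{\alpha}>> H^2(\hY; \Omega^{n-1}_{\hY}) \\
@VVV @VV{\iota}V \\
H^{n+1}_E(\hY)/\im H^1_E(\hY; \Omega^n_{\hY}) @>>> H^{n+1}(\hY),
\end{CD}$$
whose commutativity follows from the identical naturality-of-hypercohomology argument used for the corresponding local diagram in the proof of Theorem~\ref{maintheorem}(vi), now carried out on $\hY$. The decisive new input is that the right-hand vertical map $\iota$ is injective. This holds because $\hY$ satisfies the $\partial\bar\partial$-lemma (Definition~\ref{definestrict}(iv)), so the Hodge--de Rham spectral sequence degenerates at $E_1$; since $\Omega^{n+1}_{\hY} = 0$ and $H^1(\hY; \Omega^n_{\hY}) = H^1(\hY; K_{\hY}) = 0$, the term $H^2(\hY; \Omega^{n-1}_{\hY})$ is exactly $F^{n-1}H^{n+1}(\hY)$, and $\iota$ is the inclusion of this subspace.

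Finally I would run the diagram chase. By excision, $H^2_E(\hY; \Omega^{n-1}_{\hY}) \cong H^2_E(\hX; \Omega^{n-1}_{\hX})$, $H^{n+1}_E(\hY) \cong H^{n+1}_E(\hX)$ and $H^1_E(\hY; \Omega^n_{\hY}) \cong H^1_E(\hX; \Omega^n_{\hX})$, compatibly with the edge maps built from~(\ref{1.00}), so the left vertical arrow is identified with the local map whose kernel is the subspace $K'$ of Theorem~\ref{maintheorem}(vi). Hence for $\xi \in K'$ the image under the left vertical arrow is $0$, so by commutativity $\iota(\alpha(\xi)) = 0$; injectivity of $\iota$ forces $\alpha(\xi) = 0$. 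Thus $K'$ maps to zero in $H^2(\hY; \Omega^{n-1}_{\hY})$.

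I expect the main obstacle to be the bookkeeping that certifies the excision isomorphisms are genuinely compatible with the edge maps and with the global-versus-local forget-support maps, so that $K'$ really sits in the kernel of the left vertical arrow and the square commutes as stated; this is formal functoriality of (local) hypercohomology but must be tracked carefully. By contrast, the conceptual heart---the injectivity of $\iota$---is immediate once the $\partial\bar\partial$-lemma is invoked.
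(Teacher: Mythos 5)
Your proposal is correct and follows essentially the same route as the paper's proof: both establish the injectivity of $H^2(\hY; \Omega^{n-1}_{\hY}) \to H^{n+1}(\hY)$ from $H^1(\hY;\Omega^n_{\hY})=0$ and Hodge--de Rham degeneration, build the same commutative square relating the local edge map (whose kernel is $K'$) to the global one via excision, and conclude by the same diagram chase. The extra detail you supply on constructing the edge map from the analogue of~(\ref{1.00}) on $\hY$ is exactly what the paper delegates to "as in the proof of Theorem~\ref{maintheorem}(v)."
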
 
\begin{proof} By Serre duality, $ H^1(\hY; \Omega^n_{\hY}) $ is dual to $H^{n-1}(\hY; \scrO_{\hY})$ and hence is $0$ by assumption. Since the Hodge-de Rham spectral sequence degenerates at $E_1$, there  is   an injective map $H^2(\hY; \Omega^{n-1}_{\hY})\to  H^{n+1}(\hY)$. Via the map $H^{n+1}_E(\hY) \to H^{n+1}(\hY)$, the image of  $H^1_E(\hY; \Omega^n_{\hY})$ is $0$, hence as in the proof of Theorem~\ref{maintheorem}(v) there is an induced map 
$$H^2_E(\hY; \Omega^{n-1}_{\hY}) \to H^{n+1}_E(\hX)/\im H^1_E(\hY; \Omega^n_{\hY}) \to H^{n+1}(\hY).$$

There is  thus a commutative diagram
$$\begin{CD}
H^2_E(\hX; \Omega^{n-1}_{\hX}) @>{\cong}>> H^2_E(\hY; \Omega^{n-1}_{\hY})  @>>> H^2(\hY; \Omega^{n-1}_{\hY})\\
@VVV @VVV @VVV\\
H^{n+1}_E(\hX) /\im H^1_E(\hX; \Omega^n_{\hX}) @>{\cong}>> H^{n+1}_E(\hY)/\im H^1_E(\hY; \Omega^n_{\hY}) @>>> H^{n+1}(\hY).
\end{CD}$$
 By definition $K' =\Ker\{H^2_E(\hX; \Omega^{n-1}_{\hX}) \to H^{n+1}_E(\hX)/\im H^1_E(\hX; \Omega^n_{\hX})\}$, and hence its image in  $H^2_E(\hY; \Omega^{n-1}_{\hY}) $ maps to $0$ in $H^{n+1}(\hY)$. Then the image of $K'$ in  $H^2(\hY; \Omega^{n-1}_{\hY})$ must be  $0$ since the map $H^2(\hY; \Omega^{n-1}_{\hY})\to  H^{n+1}(\hY)$ is injective.
\end{proof}

 Lemma~\ref{lemma2.3} now implies  the following theorem on existence of  first order smoothings (Definition~\ref{defstsm}):

\begin{theorem}\label{smoothable}  Suppose that   $Y$ is a strict $0$-liminal Calabi-Yau variety whose unique singular point is a hypersurface  singularity satisfying the assumption of Theorem~\ref{maintheorem}(vi): the map $K' \to H^2_E( \hX; \Omega^2_{\hX}(\log E))$ is an isomorphism. Then the map $\mathbb{T}^1_Y \to K'$ given as the composition of the maps
$$\mathbb{T}^1_Y \to H^0(X; T^1_X)  \to K= K'\oplus \Gr^{n-1}_FH^n(L) \to K'.$$
is surjective.  Moreover, a  first order smoothing of $Y$ exists.
\end{theorem}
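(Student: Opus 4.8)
The plan is to establish the surjectivity of $\mathbb{T}^1_Y \to K'$ first, since the existence of a first order smoothing will then follow formally from Proposition~\ref{prop1.12} together with the fact that $K'\neq 0$ (Theorem~\ref{maintheorem}(vii), which applies because a hypersurface singularity is lci). The guiding idea is that the composite in the statement is precisely the $K'$-component of the connecting homomorphism of the \emph{global} local cohomology sequence on $\hY$, and that Lemma~\ref{lemma2.3} exhibits $K'$ as lying inside the image of that homomorphism.

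First I would write down the (non-log) local cohomology sequence for $\Omega^{n-1}_{\hY}$. As $V=\hY-E$ is Calabi--Yau we have $\Omega^{n-1}_{\hY}|V\cong T_V$, so $H^1(V;\Omega^{n-1}_{\hY}|V)=\mathbb{T}^1_Y$ and the sequence reads
$$H^1(\hY;\Omega^{n-1}_{\hY})\to \mathbb{T}^1_Y\xrightarrow{\ \rho_0\ } H^2_E(\hY;\Omega^{n-1}_{\hY})\to H^2(\hY;\Omega^{n-1}_{\hY}).$$
Restricting to the Stein neighborhood $X$ of the singular point and using the excision isomorphism $H^2_E(\hY;\Omega^{n-1}_{\hY})\cong H^2_E(\hX;\Omega^{n-1}_{\hX})$, functoriality of local cohomology gives a commutative square relating $\rho_0$, the restriction $\mathbb{T}^1_Y\to H^0(X;T^1_X)=H^1(U;T_U)$, and the local connecting map $H^1(U;T_U)\to H^2_E(\hX;\Omega^{n-1}_{\hX})$. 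By exactness of the local sequence the latter has image exactly $K=\Ker\{H^2_E(\hX;\Omega^{n-1}_{\hX})\to H^2(\hX;\Omega^{n-1}_{\hX})\}$, so it is the surjection $H^0(X;T^1_X)\twoheadrightarrow K$ of Theorem~\ref{maintheorem}(iv). Hence the composite of the statement, $\mathbb{T}^1_Y\to H^0(X;T^1_X)\to K\to K'$, equals $\operatorname{pr}_{K'}\circ(\text{excision})\circ\rho_0$, where $\operatorname{pr}_{K'}$ is the projection for the splitting $K=K'\oplus\Gr^{n-1}_FH^n(L)$.

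Now the key input: by Lemma~\ref{lemma2.3}, under excision the subspace $K'\subseteq H^2_E(\hX;\Omega^{n-1}_{\hX})\cong H^2_E(\hY;\Omega^{n-1}_{\hY})$ maps to $0$ in $H^2(\hY;\Omega^{n-1}_{\hY})$, so by exactness $K'\subseteq\im\rho_0$. Thus for each $k'\in K'$ there is $\theta\in\mathbb{T}^1_Y$ with $\rho_0(\theta)=k'$, and then $\operatorname{pr}_{K'}(\text{excision}(\rho_0(\theta)))=\operatorname{pr}_{K'}(k')=k'$, proving $\mathbb{T}^1_Y\to K'$ is surjective. For the smoothing, recall $T^1_{X,x}$ is cyclic, so $T^1_{X,x}/\mathfrak{m}_xT^1_{X,x}\cong\Cee$, and $\theta$ is a first order smoothing precisely when $\theta_x$ has nonzero image there. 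Since $H^0(X;T^1_X)\twoheadrightarrow K'$ is surjective with kernel contained in $\mathfrak{m}_xH^0(X;T^1_X)$ by Proposition~\ref{prop1.12}, there is an induced surjection $K'\twoheadrightarrow T^1_{X,x}/\mathfrak{m}_xT^1_{X,x}$ compatible with reduction mod $\mathfrak{m}_x$. Composing the surjections $\mathbb{T}^1_Y\twoheadrightarrow K'\twoheadrightarrow T^1_{X,x}/\mathfrak{m}_xT^1_{X,x}$ and noting the target is nonzero (as $X$ is singular), any $\theta$ mapping to a nonzero class satisfies $\theta_x\notin\mathfrak{m}_xT^1_{X,x}$ and hence is a first order smoothing.

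The hard part will be the diagram-chasing of the second paragraph rather than the geometry: one must check that the restriction map, the excision isomorphisms, and the two connecting homomorphisms genuinely fit into commutative diagrams, and in particular that the map $H^0(X;T^1_X)\to K$ of Theorem~\ref{maintheorem}(iv) is induced by the same identification $T_U\cong\Omega^{n-1}_{\hX}|U$ used globally, so that the statement's composite really is $\operatorname{pr}_{K'}\circ\rho_0$. Once this identification is secured, Lemma~\ref{lemma2.3} supplies the essential content and the rest is formal.
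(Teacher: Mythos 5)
Your proposal is correct and follows essentially the same route as the paper: the same commutative diagram comparing the global and local cohomology sequences, Lemma~\ref{lemma2.3} to place $K'$ in the image of the global connecting map, and Proposition~\ref{prop1.12} to upgrade surjectivity onto $K'$ to a first order smoothing. The only cosmetic difference is that the paper chases a generator of the cyclic module $H^0(X;T^1_X)$ directly, whereas you package the same Nakayama-type argument as an induced surjection $K'\twoheadrightarrow T^1_{X,x}/\mathfrak{m}_xT^1_{X,x}$.
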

\begin{proof} As in the proof of \cite[Lemma 5.6(iii)]{FL}, with $V = Y-\{x\} = \hY -E$, we have a commutative diagram
$$\begin{CD}
\mathbb{T}^1_Y = H^1(V; \Omega^{n-1}_{\hY}|V) @>>>  H^2_E(\hY; \Omega^{n-1}_{\hY}) @>>>  H^2(\hY; \Omega^{n-1}_{\hY})\\
@VVV @VV{\cong}V @.\\
H^0(X; T^1_X) = H^1(U; \Omega^{n-1}_{\hX}|U) @>>> H^2_E(\hX; \Omega^{n-1}_{\hX}) @.  
\end{CD}$$
where the top line comes from the local cohomology sequence.

By Lemma~\ref{lemma2.3}, the image of $K'$ is in the kernel of the map $H^2_E(\hY; \Omega^{n-1}_{\hY}) \to H^2(\hY; \Omega^{n-1}_{\hY})$, and hence is in the image of $\mathbb{T}^1_Y$.    Let $1' \in K'$ be the image of a generator $1$ of the cyclic module $H^0(X; T^1_X)$. Then there exists a $\theta\in  \mathbb{T}^1_Y$ whose image is $1'$.  If $\bar\theta$ is  the image of $\theta$ in $H^0(X; T^1_X)$, then, using Proposition~\ref{prop1.12}, 
$$\bar\theta - 1\in \Ker\{H^0(X; T^1_X) \to K'\} \subseteq \mathfrak{m}_xH^0(X; T^1_X).$$
 Hence $\bar\theta \notin \mathfrak{m}_xH^0(X; T^1_X)$, so by definition $\theta$ is a first order smoothing of $Y$.
\end{proof}

\begin{corollary} Suppose that $Y$ is a strict $0$-liminal Calabi-Yau threefold whose unique singular point   is a hypersurface singularity which is either  weighted homogeneous  or satisfies  $H^2(E_i;\scrO_{E_i}) =0$ for every $i$. Then $Y$ is smoothable. \qed
\end{corollary}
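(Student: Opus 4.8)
The plan is to assemble the corollary from the two main results already established, namely Theorem~\ref{smoothable} (which produces a first order smoothing) and Theorem~\ref{dim3unob} (unobstructedness in dimension $3$); the former gives a tangent direction along which $Y$ wants to smooth, and the latter guarantees that this direction integrates to an honest one-parameter family. Since $Y$ has a unique isolated hypersurface singularity, smoothness of the total space of such a family forces the nearby fibers to be smooth.

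First I would check that the standing hypothesis of Theorem~\ref{smoothable}, that the map $K' \to H^2_E(\hX; \Omega^2_{\hX}(\log E))$ is an isomorphism, is satisfied in both cases. As $n=3$, if $H^2(E_i;\scrO_{E_i}) = 0$ for every $i$ this is exactly Proposition~\ref{prop2.7}, while if the singularity is weighted homogeneous it is Corollary~\ref{wtdcor}. In either case Theorem~\ref{smoothable} yields a first order smoothing $\theta \in \mathbb{T}^1_Y$, i.e.\ an element whose image $\theta_x$ in $T^1_{Y,x}$ lies outside $\mathfrak{m}_x T^1_{Y,x}$ (Definition~\ref{defstsm}).

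Next, invoking Theorem~\ref{dim3unob}, which applies since $Y$ is a strict $0$-liminal Calabi-Yau threefold with $\Aut Y$ discrete (as in the combined corollary of the introduction), the functor $\mathbf{Def}_Y$ is unobstructed. Hence the base of the semiuniversal deformation of $Y$ is smooth with tangent space identified with $\mathbb{T}^1_Y$ via the Kodaira-Spencer map, so I can choose a smooth one-parameter disk $\Delta$ inside this base whose tangent direction maps to $\theta$. This produces a deformation $f\colon \mathcal{Y}\to \Delta$ with Kodaira-Spencer class $\theta$. By the characterization recorded in Definition~\ref{defstsm} (cf.\ \cite[Lemma 1.9]{FL}), since $\theta$ is a first order smoothing the total space $\mathcal{Y}$ is smooth, and therefore the fibers $Y_t = f^{-1}(t)$ for $0 < |t| \ll 1$ are smooth, exhibiting $Y$ as smoothable.

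The main obstacle is precisely the passage from a first order smoothing to a genuine smoothing: a priori $\theta$ is only a Zariski tangent vector, and without control of the obstructions one could not guarantee that it lifts to an actual family, nor that such a family has smooth total space. It is unobstructedness (Theorem~\ref{dim3unob}) that removes the first difficulty by making the Kuranishi base smooth, and the hypersurface (hence cyclic $T^1$) nature of the singularity, through Definition~\ref{defstsm}, that converts the condition $\theta_x \notin \mathfrak{m}_x T^1_{Y,x}$ into smoothness of the total space near the singular point; everything away from the singular point is automatic since $Y$ is smooth there.
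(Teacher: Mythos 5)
Your proposal is correct and is essentially the paper's intended argument: the corollary is stated with an immediate \qed precisely because it follows by verifying the hypothesis of Theorem~\ref{smoothable} via Corollary~\ref{wtdcor} (weighted homogeneous case) or Proposition~\ref{prop2.7} (the case $H^2(E_i;\scrO_{E_i})=0$), and then integrating the resulting first order smoothing using the unobstructedness of Theorem~\ref{dim3unob}. You are also right to flag that Theorem~\ref{dim3unob} requires $\Aut Y$ discrete, a hypothesis present in the introduction's version of this corollary but omitted from the statement here, so your explicit invocation of it is the careful reading.
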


\begin{remark} (i) As noted in Remark~\ref{remark2.8}, the hypotheses above do not cover all possible cases.   It seems very likely that the corollary holds more generally, and indeed that the hypotheses of Theorem~\ref{smoothable}  hold in greater generality.

\smallskip 
\noindent (ii) In case the singular point of $Y$ is a weighted homogeneous hypersurface singularity, Theorem~\ref{smoothable} shows that  the projection of the image of 
$\mathbb{T}^1_Y$ in $H^0(X; T^1_X)$  to the negative weight space is surjective. Note however that this does not imply that the image of $\mathbb{T}^1_Y$ in $ H^0(X; T^1_X)$ contains the negative weight space.  
\end{remark}

\section{The good anticanonical case}\label{section4} 

In the good anticanonical case, the pair $(\hY,E)$ is by definition a log Calabi-Yau variety.   More generally, let  $\hY$ be an arbitrary log Calabi-Yau manifold, i.e.\ a smooth projective manifold of dimension $n$ with $E =\bigcup_iE_i\in |-K_{\hY}|$ a normal crossing divisor, not necessarily exceptional. We first note the following result of Katzarkov-Kontsevich-Pantev\cite{KKP} and Iacono \cite[Remark 5.6]{Iacono}:

\begin{theorem}\label{Iacono} With assumptions as above, the functor  $\mathbf{Def}_{(\hY;E_1, \dots, E_r)}$  of locally trivial deformations of the pair $(\hY, E)$ is unobstructed.  \qed
\end{theorem}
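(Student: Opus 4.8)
The plan is to exhibit the differential graded Lie algebra (DGLA) governing the locally trivial deformations of the pair and to prove it is homotopy abelian by a logarithmic version of the Bogomolov--Tian--Todorov argument. First I would recall that locally trivial deformations of $(\hY, E_1,\dots,E_r)$ are governed by the Kodaira--Spencer DGLA
$$L = \bigoplus_q A^{0,q}\bigl(\hY; T_{\hY}(-\log E)\bigr),$$
with differential $\bar\partial$ and Schouten--Nijenhuis bracket: its $H^1$ is the tangent space $H^1(\hY;T_{\hY}(-\log E))$, its $H^2$ houses the obstructions, and $\mathbf{Def}_{(\hY;E_1,\dots,E_r)}$ is its Maurer--Cartan functor. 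It therefore suffices to prove that $L$ is homotopy abelian, since a homotopy abelian DGLA has unobstructed deformation functor.

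The essential geometric input is the log Calabi--Yau condition. Since $E\in|-K_{\hY}|$ we have $\Omega^n_{\hY}(\log E)\cong K_{\hY}\otimes\scrO_{\hY}(E)\cong\scrO_{\hY}$, so there is a nowhere-vanishing logarithmic $n$-form $\Omega$. Contraction with $\Omega$ gives isomorphisms $\bigwedge^p T_{\hY}(-\log E)\xrightarrow{\sim}\Omega^{n-p}_{\hY}(\log E)$ (in particular $T_{\hY}(-\log E)\cong\Omega^{n-1}_{\hY}(\log E)$), and hence an identification of $(L,\bar\partial)$ with the subcomplex $A^{n-1,\bullet}(\log E)$ of the logarithmic Dolbeault complex $\bigoplus_{p,q}A^{p,q}(\log E)$ computing $H^*(\hY-E)$. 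Under this identification I would establish the logarithmic Tian--Todorov lemma: for $\phi,\psi\in L$ one has an identity of the schematic form $\iota_{[\phi,\psi]}\Omega = \partial(\iota_\phi\iota_\psi\Omega)\pm\iota_\phi\,\partial(\iota_\psi\Omega)\pm\iota_\psi\,\partial(\iota_\phi\Omega)$, so that after contraction the bracket of two $\partial$-closed classes is $\partial$-exact. This is a purely local Cartan-calculus computation; it goes through verbatim in the logarithmic setting because $\partial$ preserves the order of pole along $E$ and hence acts on $\Omega^\bullet_{\hY}(\log E)$.

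The final step is to combine the Tian--Todorov lemma with the degeneration at $E_1$ of the logarithmic Hodge--de Rham spectral sequence $E_1^{p,q}=H^q(\hY;\Omega^p_{\hY}(\log E))\Rightarrow H^{p+q}(\hY-E)$ of the simple normal crossing pair $(\hY,E)$ --- equivalently the logarithmic $\partial\bar\partial$-lemma --- which holds because $\hY$ is projective and $E$ has simple normal crossings (Deligne). These two inputs together show, via the standard criterion (Manetti's homotopy-abelian criterion, or the $T^1$-lifting principle of Kawamata--Ran), that $L$ is homotopy abelian, whence $\mathbf{Def}_{(\hY;E_1,\dots,E_r)}$ is unobstructed.

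The main obstacle is precisely this last step: in the logarithmic case the complex $\bigoplus_{p,q}A^{p,q}(\log E)$ carries a \emph{mixed}, not pure, Hodge structure, so one cannot simply quote the compact K\"ahler $\partial\bar\partial$-lemma. One must instead invoke the strictness of the Hodge and weight filtrations together with the $E_1$-degeneration for the log complex in order to guarantee, in the precise form the formality argument requires, that a $\partial$-exact $\bar\partial$-closed log form is $\partial\bar\partial$-exact. This Hodge-theoretic content --- that the log de Rham DGLA is formal and the contraction with $\Omega$ intertwines the bracket with $\partial$ up to homotopy --- is exactly what was isolated by Katzarkov--Kontsevich--Pantev and by Iacono, and it is the crux on which the unobstructedness rests.
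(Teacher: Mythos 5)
The paper gives no proof of this statement --- it is quoted directly from Katzarkov--Kontsevich--Pantev and Iacono with a \qed --- and your sketch is precisely the argument of those references: the log Kodaira--Spencer DGLA $A^{0,\bullet}(\hY;T_{\hY}(-\log E))$, contraction with the trivializing section of $\Omega^n_{\hY}(\log E)$, the logarithmic Tian--Todorov identity, and Deligne's $E_1$-degeneration of the log Hodge--de Rham spectral sequence standing in for the $\partial\bar\partial$-lemma. So your proposal is correct and takes essentially the same route as the proof the paper is citing, including correctly identifying the degeneration/strictness step as the genuine crux rather than a naive appeal to the compact K\"ahler $\partial\bar\partial$-lemma.
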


Moreover, $E$ is itself a Calabi-Yau manifold of dimension $n-1$ or more generally a normal crossing variety with trivial dualizing sheaf, and we can consider the relationships among the deformations of the pair $(\hY,E)$ and deformations of $E$, or the locally trivial deformations of  $(\hY,E)$ and locally trivial deformations of $E$. Finally, it might be possible to complete $(\hY,E)$ to a $d$-semistable normal crossing variety   of dimension $n$ with trivial dualizing sheaf, and the deformation theory of such varieties has been studied by  Kawamata-Namikawa \cite{KawamataNamikawa}. In this section, we make some very preliminary comments on the relationships among these various deformation functors. 

\subsection{Deformations of the pair $(\hY,E)$}\label{subsection3.5}  First consider the locally trivial deformations of the pair $(\hY; \sum_iE_i)$. Here $T_{\hY}(-\log E) \cong \Omega_{\hY}^{n-1}(\log E)$, and  we have the global analogue of the exact sequence \ref{eqn0}:
$$0 \to T_{\hY}(-\log E) \to T_{\hY} \to \bigoplus_iN_{E_i/\hY} \to 0.$$
In particular, by the argument of Proposition~\ref{Prop2.3}(ii) and Theorem~\ref{Iacono}, we have:

\begin{proposition}\label{prop4.2}  Suppose that  $\hY$ is a good anticanonical resolution of the strict $0$-liminal Calabi-Yau variety $Y$ and $E$ is the exceptional set. The natural map $H^1(\hY; T_{\hY}(-\log E)) \to H^1(\hY; T_{\hY})$ is injective, and it is surjective if $H^1(E_i; N_{E_i/\hY}) =0$ for every $i$.  Thus, if  $H^1(E_i; N_{E_i/\hY}) =0$ for every $i$, then the  forgetful map $\mathbf{Def}_{(\hY;E_1, \dots, E_r)}  \to \mathbf{Def}_{\hY}$ is an isomorphism and hence $\mathbf{Def}_{\hY}$ is unobstructed.\qed
\end{proposition}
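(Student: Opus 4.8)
The plan is to deduce the whole proposition from the normal bundle sequence of the pair, paralleling the proof of Proposition~\ref{Prop2.3}(ii) on the compact resolution $\hY$, and then to feed the resulting tangent-level isomorphism into the unobstructedness of $\mathbf{Def}_{(\hY;E_1,\dots,E_r)}$ supplied by Theorem~\ref{Iacono}. First I would take the long exact cohomology sequence of
$$0 \to T_{\hY}(-\log E) \to T_{\hY} \to \bigoplus_iN_{E_i/\hY} \to 0.$$
Since $\pi\colon\hY\to Y$ contracts $E$ to the single singular point $x$, the exceptional components $E_i$ are rigid in $\hY$, so $\bigoplus_iH^0(E_i; N_{E_i/\hY}) = 0$, exactly as in Proposition~\ref{Prop2.3}(ii). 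Hence the connecting map into $H^1(\hY; T_{\hY}(-\log E))$ vanishes, giving injectivity of $H^1(\hY; T_{\hY}(-\log E)) \to H^1(\hY; T_{\hY})$; moreover the cokernel of this map embeds into $\bigoplus_iH^1(E_i; N_{E_i/\hY})$, so under the hypothesis $H^1(E_i; N_{E_i/\hY}) = 0$ for every $i$ the map is an isomorphism. This establishes the first sentence of the proposition.

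Next I would promote this to an isomorphism of deformation functors. The spaces $H^1(\hY; T_{\hY}(-\log E))$ and $H^1(\hY; T_{\hY})$ are the tangent spaces of $\mathbf{Def}_{(\hY;E_1,\dots,E_r)}$ and of $\mathbf{Def}_{\hY}$ respectively; note that since $\hY$ is smooth all of its deformations are automatically locally trivial, so the target is unambiguous. To show the forgetful map $p\colon \mathbf{Def}_{(\hY;E_1,\dots,E_r)} \to \mathbf{Def}_{\hY}$ is an isomorphism, I would argue by induction on the length of the Artinian base along a small extension $0 \to (t) \to B' \to B \to 0$. Given $\hY_{B'}\in\mathbf{Def}_{\hY}(B')$, its restriction $\hY_B$ lifts uniquely (by induction) to a locally trivial deformation of the pair; the obstruction to then deforming each smooth component $E_i$ to a relative divisor inside $\hY_{B'}$ lies in $H^1(E_i; N_{E_i/\hY})=0$, so such a lift exists, and the torsor of lifts is governed by $H^0(E_i; N_{E_i/\hY})=0$, so it is unique. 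This yields bijectivity of $p_{B'}$, with the base case over $\Cee$ being trivial; hence $p$ is an isomorphism of functors.

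Finally, $\mathbf{Def}_{(\hY;E_1,\dots,E_r)}$ is unobstructed by Theorem~\ref{Iacono}, and transporting this through the isomorphism $p$ shows that $\mathbf{Def}_{\hY}$ is unobstructed as well.

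The step I expect to be the main obstacle is the passage from the tangent-space isomorphism to the genuine isomorphism of functors: one must verify that the component-wise relative deformation theory controlled by $H^0(E_i;N_{E_i/\hY})$ and $H^1(E_i;N_{E_i/\hY})$ really computes \emph{locally trivial} deformations of the normal crossing pair $(\hY,E)$, i.e.\ that the unique component-wise extensions fit together compatibly (using smoothness of $\hY$ and transversality of the $E_i$, which is preserved infinitesimally), and that the rigidity input $\bigoplus_iH^0(E_i;N_{E_i/\hY})=0$ indeed persists in the compact setting. Once these points are granted, the bijectivity argument and the appeal to Theorem~\ref{Iacono} are purely formal.
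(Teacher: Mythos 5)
Your argument coincides with the paper's: Proposition~\ref{prop4.2} is stated with a terminal \qed precisely because it follows, exactly as you argue, from the long exact cohomology sequence of $0 \to T_{\hY}(-\log E) \to T_{\hY} \to \bigoplus_iN_{E_i/\hY} \to 0$ together with the vanishing $\bigoplus_i H^0(E_i; N_{E_i/\hY}) = 0$ (the argument of Proposition~\ref{Prop2.3}(ii), valid here since $E$ is still the exceptional set of $\pi$) and Theorem~\ref{Iacono}. The paper leaves the promotion from the tangent-space isomorphism to the isomorphism of functors implicit; your small-extension induction is a correct way to fill that in, and is equivalent to the standard smoothness criterion, since the same exact sequence shows that $H^2(\hY; T_{\hY}(-\log E)) \to H^2(\hY; T_{\hY})$ is injective under the hypothesis $H^1(E_i; N_{E_i/\hY})=0$.
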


\begin{remark} The assumption that $H^1(E_i; N_{E_i/\hY}) =0$ for every $i$ seems to be a fairly mild one. For example, if $E$ is smooth and $\scrO_E(-E)$ is ample, then $H^1(E; N_{E/\hY}) =0$ by Kodaira vanishing.
\end{remark}

As noted in the proof of Theorem~\ref{maintheorem}(i), under the hypotheses of Proposition~\ref{prop4.2}, by the Du Bois condition, $H^1_E(\hY; \Omega^{n-1}_{\hY}(\log E)) = 0$.  Thus there is a commutative diagram
$$\begin{tikzcd}[column sep=small]
 \scriptstyle 0  \arrow[r]  &  \scriptstyle H^1(\hY; \Omega^{n-1}_{\hY}(\log E)) \arrow[r] \arrow[d, "\cong"] &  \scriptstyle H^1(V; T_V) \arrow[r] \arrow[d, "\cong"] &   \scriptstyle H^2_E(\hY; \Omega^{n-1}_{\hY}(\log E)) \arrow[r] \arrow[d, "\cong"] &   \scriptstyle H^2(\hY; \Omega^{n-1}_{\hY}(\log E)) \arrow[d, "\cong"]\\
 \scriptstyle 0\arrow[r]  &  \scriptstyle H^1(\hY; T_{\hY}(-\log E)) \arrow[r]  &  \scriptstyle \mathbb{T}^1_Y \arrow[r]  &   \scriptstyle H^2_E(\hY; T_{\hY}(-\log E))\arrow[r]  &   \scriptstyle H^2(\hY; T_{\hY}(-\log E)).
 \end{tikzcd}$$

For simplicity, from now on we   restrict ourselves to the case $r=1$, i.e.\ $E$ is smooth. In general, we have the exact sequence
\begin{equation}\label{3.00}
0 \to T_{\hY}(-E) \to T_{\hY}(-\log E) \to T_E \to 0.
\end{equation}
In our case, $T_{\hY}(-E) = T_{\hY} \otimes K_{\hY} \cong \Omega^{n-1}_{\hY}$, $T_{\hY}(-\log E) \cong \Omega_{\hY}^{n-1}(\log E)$, and $T_E \cong \Omega^{n-2}_E$ since $K_E\cong \scrO_E$ and $E$ has dimension $n-1$. Thus the  sequence (\ref{3.00}) is the Poincar\'e residue sequence (\ref{2.99}).
For the map on $H^1$ terms we get (all vertical maps isomorphisms) 
$$\begin{CD}
H^1(\hY;T_{\hY}(-E)) @>>> H^1(\hY; T_{\hY}(-\log E)) @>>> H^1(E; T_E) @>{\partial}>> H^2(\hY;  T_{\hY}(-E)) \\
@VVV @VVV @VVV @VVV \\
H^1(\hY; \Omega^{n-1}_{\hY} ) @>>> H^1(\hY;\Omega_{\hY}^{n-1}(\log E)) @>>> H^1(E; \Omega^{n-2}_E) @>{\gamma}>>  H^2(\hY;\Omega^{n-1}_{\hY}).
\end{CD}$$
Then the coboundary map $\partial \colon H^1(E;T_E) \to H^2(\hY;  T_{\hY}(-E))$ is identified with the Gysin map $\gamma \colon H^1(E; \Omega^{n-2}_E) \to H^2(\hY;\Omega^{n-1}_{\hY})$. The kernel of $\partial$ is the annihilator of the image of the dual map $\iota^*\colon H^{n-2}(\hY;\Omega^1_{\hY}) \to H^{n-2}(E; \Omega^1_E)$.  Summarizing, we have the following:

\begin{proposition}\label{prop4.4} With $\partial =\gamma$ and $\iota$   as above,
 the image of $H^1(\hY;T_{\hY}(-\log E))$ in $H^1(E;T_E)$ is equal to
$$\{\theta \in  H^1(E;T_E): \theta \smile \iota^*\xi  = 0 \text{ for all $\xi \in H^{n-2}(\hY;\Omega^1_{\hY})$}\}. \qed$$
\end{proposition}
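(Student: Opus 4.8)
The plan is to read off the image as a kernel coming from the residue sequence, and then to identify that kernel by Serre duality on $E$. Taking the long exact cohomology sequence of the Poincar\'e residue sequence (\ref{3.00}) and using exactness at the $H^1(E;T_E)$ spot, the image of $H^1(\hY; T_{\hY}(-\log E))$ in $H^1(E;T_E)$ is exactly $\Ker\partial$. As recorded just above the statement, under the vertical isomorphisms of the displayed diagram the coboundary $\partial$ is identified with the Gysin (pushforward) map $\gamma = \iota_*\colon H^1(E;\Omega^{n-2}_E) \to H^2(\hY; \Omega^{n-1}_{\hY})$, where $\iota\colon E\hookrightarrow \hY$ is the inclusion. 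Thus it suffices to show that $\Ker\gamma$ equals the annihilator of $\im\iota^*$ for the cup-product pairing.

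The key point is that $\iota_*$ is Serre dual to the restriction $\iota^*\colon H^{n-2}(\hY; \Omega^1_{\hY}) \to H^{n-2}(E; \Omega^1_E)$. Since $\dim E = n-1$ and $K_E\cong \scrO_E$, Serre duality on $E$ makes the cup product
$$H^1(E; T_E) \otimes H^{n-2}(E; \Omega^1_E) \to H^{n-1}(E; \scrO_E) \cong \Cee$$
a perfect pairing (here we use $T_E \cong \Omega^{n-2}_E$), while Serre duality on $\hY$ pairs $H^2(\hY; \Omega^{n-1}_{\hY})$ perfectly with $H^{n-2}(\hY; \Omega^1_{\hY})$. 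The projection formula $\iota_*(\alpha \smile \iota^*\xi) = (\iota_*\alpha)\smile \xi$, together with the compatibility of the two trace maps under $\iota_*$, yields
$$\langle \iota_*\alpha, \xi\rangle_{\hY} = \langle \alpha, \iota^*\xi\rangle_E \quad\text{for } \alpha \in H^1(E; \Omega^{n-2}_E),\ \xi \in H^{n-2}(\hY; \Omega^1_{\hY}),$$
which is precisely the asserted duality.

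Granting this, $\theta\in\Ker\gamma$ if and only if $\langle \iota_*\theta, \xi\rangle_{\hY} = 0$ for every $\xi$, if and only if $\langle \theta, \iota^*\xi\rangle_E = 0$ for every $\xi\in H^{n-2}(\hY; \Omega^1_{\hY})$. Because the pairing on $E$ is the cup product, this is exactly the condition $\theta \smile \iota^*\xi = 0$ for all $\xi$, which describes the subspace in the statement. Combined with the first step, this identifies the image of $H^1(\hY; T_{\hY}(-\log E))$ in $H^1(E;T_E)$ with the desired subspace, proving the proposition.

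The one step requiring genuine care is the duality between $\iota_*$ and $\iota^*$: one must check that the trace isomorphisms for $E$ and for $\hY$ intertwine the cup-product pairings as displayed above. This is the Hodge-theoretic incarnation of the adjunction $(\iota_*, \iota^*)$ and the projection formula for a smooth divisor, and it is the crux of the argument; the remaining steps are formal consequences of exactness and the identifications already built into the diagram.
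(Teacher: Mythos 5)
Your proposal is correct and follows essentially the same route as the paper, which proves the proposition by the ``Summarizing'' discussion immediately preceding it: the image is $\Ker\partial$ by exactness of the residue sequence, $\partial$ is identified with the Gysin map $\gamma$, and $\Ker\gamma$ is the annihilator of $\im\iota^*$ by Serre duality. You merely spell out the projection-formula/trace compatibility that the paper takes as standard.
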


The meaning of the proposition is as follows: Let $\mathbf{Def}_E$ be the deformation functor of $E$ and let $\mathbf{Def}_{(\hY,E)}$ be the functor  of deformations of the pair $(\hY, E)$. Then  $\mathbf{Def}_E$ is unobstructed by the Tian-Todorov theorem, and  $\mathbf{Def}_{(\hY,E)}$  of deformations of the pair $(\hY, E)$ is unobstructed by Theorem~\ref{Iacono}. The map $H^1(\hY; T_{\hY}(-\log E)) \to  H^1(E; T_E)$ is the corresponding map on tangent spaces.

For example, suppose that  $n=3$. If  $H^1(E; \scrO_E) =0$, then $E$ is a $K3$ surface. If $H^1(\hY; \scrO_{\hY}) =0$, then $H^1(E; \scrO_E) =0$ as follows from the exact sequence
$$H^1(\hY; \scrO_{\hY}) \to H^1(E; \scrO_E) \to H^2(\hY; \scrO_{\hY}(-E))= H^2(\hY; K_{\hY} )$$
and duality. In the following, we make the assumption  that   $H^1(E; \scrO_E) = H^2(\hY; \scrO_{\hY}) =0$. For example, if $\hY$ is the resolution of a strict $0$-liminal Calabi-Yau threefold, then by definition $H^1(\hY; \scrO_{\hY}) =  H^1(\hY; \scrO_{\hY}) = 0$. Since  $H^2(\hY; \scrO_{\hY}) =0$,   $H^1(\hY;\Omega^1_{\hY}) \cong \Pic  \hY  \otimes \Cee$, so that the image of $H^1(\hY;T_{\hY}(-\log E))$ in $H^1(E;T_E)$ is then
$$\{\theta \in  H^1(E;T_E): \theta \smile c_1(L|E)  = 0 \text{ for all $L \in \Pic \hY$}\}.$$
This is just the Zariski tangent space to the (smooth) locus of deformations of the $K3$ surface $E$ for which all of the line bundles $L|E$ deform or equivalently for which the classes $c_1(L|E)$ remain of type $(1,1)$ for all $L \in \Pic \hY$ (see for example \cite[Theorem 3.3.11]{Sernesi}). 

For a  general $n \ge 3$,  the pairing 
$$H^1(E;T_E) \otimes H^{n-2}(E;\Omega^1_E) \to H^{n-1}(E; \scrO_E) \cong H^{n-1}(E; \omega_E)$$
is perfect. Thus, if $I$ is the image of  $H^{n-2}(\hY;\Omega^1_{\hY})$ in $H^{n-2}(E;\Omega^1_E)$, the image of $H^1(\hY;T_{\hY}(-\log E))$ in $H^1(E;T_E)$ is a vector subspace of codimension $\dim I$. More generally, given a deformation of the pair $p\colon (\widehat{\mathcal{Y}}, \mathcal{E})\to S$ over $S =\Spec A$, where $A$ is an Artin local $\Cee$-algebra, we have the corresponding deformation $p\colon \mathcal{E} \to S$. Suppose that $H^1(\hY; \scrO_{\hY}) =0$, which will hold if $\hY$ is the resolution of a strict $0$-liminal Calabi-Yau variety. A standard argument using induction on $\ell(A)$ shows that, under this assumption,  the homomorphism $\Pic \widehat{\mathcal{Y}} \to \Pic \mathcal{E} $ is injective. Then   $\omega_{\widehat{\mathcal{Y}}/S} = \scrO_{\widehat{\mathcal{Y}}}(-\mathcal{E})$ and hence $\omega_{\mathcal{E}} \cong \scrO_{\mathcal{E}}$. There is  a commutative diagram
$$\begin{CD}
  R^1p_* T_{\widehat{\mathcal{Y}}/S}(-\log \mathcal{E}) @>>> R^1p_*T_{\mathcal{E}} @>{\partial}>> R^2p_* T_{\widehat{\mathcal{Y}}/S}(-\mathcal{E}) \\
  @VVV @VVV @VVV \\
  R^1p_*\Omega_{\widehat{\mathcal{Y}}/S}^{n-1}(\log \mathcal{E}) @>>> R^1p_*\Omega^{n-2}_{\mathcal{E}} @>{\gamma}>>  R^2p_*\Omega^{n-1}_{\widehat{\mathcal{Y}}/S}.
\end{CD}$$
As before $\gamma$ is dual to the map $\iota^*\colon R^{n-2}p_*\Omega^1_{\widehat{\mathcal{Y}}/S}  \to R^{n-2}p_*\Omega^1_{\mathcal{E}} $, which gives a restriction  on the variations of Hodge structure on $H^{n-1}(E;\Cee)$ coming from deformations of the pair $(\hY, E)$. Of course, for $n=3$ the variation of Hodge structure on $H^2(\hY; \Cee)= H^1(\hY; \Omega^1_{\hY})$ is constant. For $n> 3$, it is certainly possible  that $H^{n-2}(\hY;\Omega^1_{\hY}) =0$. For instance, the examples constructed in Section~\ref {section5} have this property for $n > 3$. If the condition $H^{n-2}(\hY;\Omega^1_{\hY}) =0$ is satisfied, then the morphism  $\mathbf{Def}_{(\hY,E)} \to \mathbf{Def}_E$ is smooth.

\begin{remark} 
It is also interesting to consider the kernel of the map $H^1(\hY;T_{\hY}(-\log E)) \to H^1(E;T_E)$. Note that $H^0(E; T_E) = 0$ if $h^{0, n-2}(E) = 0$ or equivalently there does not exist a finite \'etale cover of $E$ of the form $V\times A$, where $A$ is an abelian variety. Under the assumption that $H^0(E; T_E) = 0$, by the above remarks, the kernel of the map $H^1(\hY;T_{\hY}(-\log E)) \to H^1(E;T_E)$ is then $H^1(\hY;T_{\hY}(-E))$ which is identified with $H^1(\hY; \Omega^{n-1}_{\hY} )$. 
\end{remark}

 \subsection{Good anticanonical models and normal crossing models}\label{subsection3.6} In this subsection, we consider generally the case of a strict $0$-liminal Calabi-Yau variety $Y$   with $E$ smooth and such that $\pi \colon \hY \to Y$ is a good anticanonical resolution: $K_{\hY} = \scrO_{\hY}(-E)$. In particular $K_E \cong \scrO_E$. However, most of the discussion applies equally well to the case where $(\hY, E)$ is just assumed to be a log Calabi-Yau pair. Assume in addition that there exists a smooth projective variety $W_2$ such that $E\subseteq W_2$, $K_{W_2} = \scrO_{W_2}(-E)$, and (the $d$-semistable condition) 
$$\scrO_{\hY}(E)|E \otimes \scrO_{W_2}(E)|E \cong \scrO_E.$$
Note that if $E \subseteq \hY$ is the exceptional divisor, the expectation is that $\scrO_{\hY}(-E)|E$ is ample, or close to it. If also $E$ is isomorphic to a hypersurface in a smooth projective variety $W_2$ with $K_{W_2} = \scrO_{W_2}(-E)$ and $\scrO_{W_2}(E)|E\cong \scrO_{\hY}(-E)|E$, then $E$ is an anticanonical divisor in $W_2$ and the bundle $K_{W_2}^{-1}|E$ is  then close to being ample. Thus  $W_2$ is roughly speaking close to being  a Fano manifold.  
For example, for the hypersurfaces $Y$ constructed in  \S\ref{section5},  we can take $W_2$ to be a smooth hypersurface of degree $n+1$ in $\Pee^{n+1}$ containing $E$ as a smooth hyperplane section.  

In any case, the normal crossing variety $W=\hY \cup_EW_2$ obtained by gluing $\hY$ and $W_2$ along $E$ is $d$-semistable and has trivial dualizing sheaf: $\omega_W \cong \scrO_W$. Thus, by a result of Kawamata-Namikawa \cite{KawamataNamikawa}, under some mild conditions, $W$  is smoothable. Specifically, the conditions are:  $H^{n-1}(W; \scrO_W) = 0$ (this amounts to $H^{n-1}(\hY; \scrO_{\hY}) = H^{n-1}(W_2; \scrO_{W_2}) = 0$ and $H^{n-2}(E; \scrO_E) = 0$)  and $H^{n-2}(\hY; \scrO_{\hY}) = H^{n-2}(W_2; \scrO_{W_2}) = 0$. In terms of the deformation theory of $Y$, we have the following:

\begin{proposition}\label{blowdownFano}  In the above notation, suppose that $\scrO_{W_2}(E)$ is ample, i.e.\ that $W_2$ is Fano.  If $\mathcal{W} \to \Delta$ is the (smooth) total space of a one-parameter smoothing of $W$, then the divisor $W_2$ can be blown down to give a flat family which is a deformation of $Y$. In particular, $Y$ is smoothable.
\end{proposition}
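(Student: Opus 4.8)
The plan is to produce the asserted family by contracting the component $W_2$ of the central fibre of $\mathcal{W}\to\Delta$, in the spirit of the construction preceding Lemma~\ref{classdim2}, and then to identify the central fibre of the contracted family with $Y$.

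First I would compute the normal bundle of $W_2$ in the smooth total space $\mathcal{W}$. Since $\mathcal{W}\to\Delta$ is a smoothing of the $d$-semistable variety $W=\hY\cup_EW_2$, the central fibre $\mathcal{W}_0=f^{-1}(0)=\hY+W_2$ is a reduced principal divisor, so $\scrO_{\mathcal{W}}(\hY+W_2)|W_2\cong\scrO_{W_2}$. Restricting and using $\hY\cap W_2=E$ gives
$$N_{W_2/\mathcal{W}}=\scrO_{\mathcal{W}}(W_2)|W_2\cong\scrO_{\mathcal{W}}(-\hY)|W_2\cong\scrO_{W_2}(-E).$$
By the Fano hypothesis $\scrO_{W_2}(E)=K_{W_2}^{-1}$ is ample, so $\scrO_{W_2}(-W_2)|W_2\cong\scrO_{W_2}(E)$ is ample; that is, $N_{W_2/\mathcal{W}}$ is the dual of an ample line bundle. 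Note that this step uses only the smoothness of $\mathcal{W}$ and the reducedness of $\mathcal{W}_0$; the $d$-semistability enters only through the existence of $\mathcal{W}$.

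Next, Grauert's contractibility criterion furnishes a proper modification $c\colon\mathcal{W}\to\mathcal{Y}$ blowing $W_2$ down to a single normal point $p$, an isomorphism over $\mathcal{Y}\setminus\{p\}\cong\mathcal{W}\setminus W_2$, with $\mathcal{Y}$ normal. Because $f$ is constant (equal to $0$) on the fibre $W_2$ of $c$ and $c_*\scrO_{\mathcal{W}}=\scrO_{\mathcal{Y}}$, it descends to a holomorphic map $g\colon\mathcal{Y}\to\Delta$ with $g\circ c=f$. For $t\neq0$ the map $c$ is an isomorphism in a neighbourhood of $\mathcal{W}_t$ (which is disjoint from $W_2$), so $\mathcal{Y}_t\cong\mathcal{W}_t$ is smooth. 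Flatness of $g$ is automatic: $\mathcal{Y}$ is integral (its dense open subset $\mathcal{W}\setminus W_2$ is irreducible and $\mathcal{Y}$ is normal) and $g$ is non-constant, so each local ring $\scrO_{\mathcal{Y},y}$ is a domain in which $g-g(y)$ is a nonzerodivisor, hence torsion-free and therefore flat over the discrete valuation ring $\scrO_{\Delta,g(y)}$.

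It remains to identify the central fibre $\mathcal{Y}_0=g^{-1}(0)$ with $Y$. Since $c^{-1}(\mathcal{Y}_0)=\mathcal{W}_0=\hY\cup_EW_2$ and $c$ collapses $W_2\supseteq E$ to $p$, we have $\mathcal{Y}_0=c(\hY)$, and $c|\hY\colon\hY\to\mathcal{Y}_0$ contracts $E$ to $p$ and is an isomorphism on $\hY\setminus E$. In particular $c|\hY$ is birational onto $\mathcal{Y}_0$, so $\mathcal{Y}_0$ is generically reduced; being a Cartier divisor in the normal (hence $S_2$) space $\mathcal{Y}$, it is $S_1$, and an $R_0+S_1$ space is reduced. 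Thus $c|\hY$ realizes $\mathcal{Y}_0$ as a contraction of $E\subseteq\hY$ agreeing away from $p$ with the good anticanonical resolution $\pi\colon\hY\to Y$; once $\mathcal{Y}_0$ is known to be normal, uniqueness of the contraction of $E$ to a normal point forces $\mathcal{Y}_0\cong Y$. Granting this, $\mathcal{Y}\to\Delta$ is a flat deformation of $Y$ with smooth general fibre, so $Y$ is smoothable. The main obstacle is exactly this last identification: one must check that the scheme-theoretic central fibre is not merely reduced but normal (equivalently $S_2$ at $p$), so that it coincides with $Y$ and not with some non-normal contraction of $E$. This requires controlling the local structure of $\mathcal{Y}$ along the contracted point rather than merely the abstract existence of the Grauert contraction.
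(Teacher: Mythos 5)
Your computation of $N_{W_2/\mathcal{W}}\cong\scrO_{W_2}(-E)$ and the appeal to Grauert's criterion match the paper exactly, and your flatness argument for $g\colon\mathcal{Y}\to\Delta$ (normal integral total space, nonconstant map to the disk) is correct. But the step you yourself flag as "the main obstacle" --- identifying the scheme-theoretic central fibre $\mathcal{Y}_0$ with $Y$ rather than with some non-normal contraction of $E\subseteq\hY$ --- is a genuine gap, and it is precisely the point where the paper does real work. Showing that $\mathcal{Y}_0$ is reduced ($R_0+S_1$) does not suffice; one needs $\scrO_{\mathcal{Y}_0}\cong\pi_*\scrO_{\hY}$ at the contracted point, i.e.\ normality of the fibre, and this does not follow from the abstract existence of the Grauert contraction.

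The paper closes this gap cohomologically: it proves $R^1\rho_*\scrO_{\mathcal{W}}=0$ via the formal functions theorem along $W_2$, using the exact sequences
$$0 \to I_{W_2}^n/I_{W_2}^{n+1} \to \scrO_{(n+1)W_2} \to \scrO_{nW_2} \to 0$$
together with $I_{W_2}/I_{W_2}^2\cong\scrO_{W_2}(E)=K_{W_2}^{-1}$, so that the graded pieces are $K_{W_2}^{-n}$ and $H^1(W_2;K_{W_2}^{-n})=0$ for $n\ge 1$ by Kodaira vanishing on the Fano $W_2$. With this vanishing in hand, Wahl's theory (\cite[Theorem 1.4(c)]{Wahl}, already invoked earlier in the paper for the morphism $\mathbf{Def}_{\hX}\to\mathbf{Def}_X$) delivers in one stroke both the flatness of $\overline{\mathcal{W}}\to\Delta$ and the fact that the central fibre is the blow-down of $\mathcal{W}_0=\hY\cup_E W_2$, namely $Y$. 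Note that the Fano hypothesis is therefore used twice: once to make $N_{W_2/\mathcal{W}}$ the dual of an ample bundle (which you have), and once for the vanishing $H^1(W_2;K_{W_2}^{-n})=0$ (which you are missing). To complete your argument you would need to supply this $R^1$-vanishing, or an equivalent direct computation of $c_*\scrO_{\mathcal{W}_0}$ near the contracted point showing it agrees with $\scrO_{\mathcal{Y}}/(t)$.
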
 
\begin{proof} Since $\scrO_{\mathcal{W}} \cong \scrO_{\mathcal{W}}(W_1+ W_2)$, the normal bundle $\scrO_{\mathcal{W}}(W_2)$  restricts on $W_2$ to the line bundle $\scrO_{W_2}(-E)$ which is the dual of an ample line bundle. By Grauert's criterion,  there is a contraction morphism $\rho\colon \mathcal{W} \to \overline{\mathcal{W}}$, where $\overline{\mathcal{W}}$ is a normal complex space. There is an exact sequence
$$0 \to I_{W_2}^n/I_{W_2}^{n+1} \to \scrO_{(n+1)W_2} \to \scrO_{nW_2} \to 0,$$
where $ I_{W_2} = \scrO_{\mathcal{W}}(-W_2)$ and  $I_{W_2}/I_{W_2}^2=  \scrO_{W_2}(E) = K_{W_2}^{-1}$. Because $W_2$ is Fano, $H^1(W_2; K_{W_2}^{-n}) =0$ for all $n \ge 1$. 
Thus, by the formal functions theorem,  $R^1\rho_*\scrO_{\mathcal{W}}=0$. By Wahl's theory \cite[Theorem 1.4(c)]{Wahl}, the induced morphism $\overline{\mathcal{W}} \to \Delta$ is flat and gives a smoothing of $Y$.
\end{proof}

Note that, in the examples of degree $n+2$ hypersurfaces  $Y$ in Section~\ref{section5},  the hypotheses of Proposition~\ref{blowdownFano} are satisfied. Of course, since in these examples $Y$ is a hypersurface in $\Pee^{n+1}$, it is automatically smoothable and  $\mathbf{Def}_Y$ is in addition unobstructed.

As in \S\ref{subsection3.5}, we can consider deformations of $d$-semistable  normal crossing varieties $W=\hY \cup_EW_2= W_1\cup_EW_2$, say with trivial dualizing sheaf, without assuming that $E$ is exceptional in $W_1= \hY$.  As before, we would like to compare $\mathbf{Def}_W$ with $\mathbf{Def}_E$, at least on the level of tangent spaces. We begin by collecting some basic facts about $W$.

\begin{lemma}\label{normalX} With $W=\hY \cup_EW_2= W_1\cup_EW_2$ as above, let $\nu \colon \widetilde{W} = W_1 \amalg W_2 \to W$ be the normalization morphism and  let  $i\colon E  \to W$ be the inclusion. 
\begin{enumerate}
\item[\rm(i)] There is an exact sequence 
$$0 \to T^0_W \to \nu_*(T_{W_1}(-\log E) \oplus T_{W_2}(-\log E)) \to i_*T_E \to 0.$$
\item[\rm(ii)] There is an exact sequence
$$0 \to \nu_*(T_{W_1}(-E) \oplus T_{W_2}(-E)) \to  T^0_W \to i_*T_E \to 0.$$
\end{enumerate}
\end{lemma}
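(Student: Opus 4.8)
The statement is local along the double locus $E$, so the plan is to compute the tangent sheaf $T^0_W=\mathcal{H}om(\Omega^1_W,\scrO_W)$ (equivalently, the sheaf of $\Cee$-derivations of $\scrO_W$) in a neighborhood of $E$, and then to read off both sequences from a single chain of subsheaves on $W$. The basic input is, for each component, the standard sequence of a smooth divisor
$$0\to T_{W_j}(-E)\to T_{W_j}(-\log E)\xrightarrow{\,r_j\,} i_*T_E\to 0 \qquad (j=1,2),$$
where $r_j$ restricts a logarithmic (tangent-to-$E$) vector field to $E$; this is the dual of the Poincar\'e residue sequence, exactly as in \eqref{3.00} for $W_1=\hY$ and identically on $W_2$. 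Note that $E$ is smooth here, so $T^0_E=T_E$.

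The heart of the proof is the identification of $T^0_W$ with the sheaf of \emph{compatible pairs} of logarithmic fields. Fix analytic coordinates with $\scrO_W=\Cee\{x,y,z_1,\dots,z_{n-1}\}/(xy)$, $W_1=\{y=0\}$, $W_2=\{x=0\}$, $E=\{x=y=0\}$. A derivation $D$ is determined by $D(x),D(y),D(z_i)\in\scrO_W$ subject only to the single relation $y\,D(x)+x\,D(y)=0$ coming from $D(xy)=0$. Restricting this relation to each branch forces $D(y)|_{W_1}=0$ and $D(x)|_{W_2}=0$, so $D$ preserves the ideal of each component and descends to derivations $D_1,D_2$ of $\scrO_{W_1},\scrO_{W_2}$; single-valuedness of $D(x),D(y)$ along $E$ then gives $D(x)|_{W_1}\in(x)$ and $D(y)|_{W_2}\in(y)$, which says precisely that $D_j\in T_{W_j}(-\log E)$. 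Conversely, any pair $(D_1,D_2)$ with $r_1(D_1)=r_2(D_2)$ patches to a derivation of $\scrO_W$. This yields a natural identification
$$T^0_W\ \cong\ \ker\bigl(r_1-r_2\colon \nu_*\bigl(T_{W_1}(-\log E)\oplus T_{W_2}(-\log E)\bigr)\to i_*T_E\bigr).$$

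Granting this, both sequences fall out of the inclusions
$$\nu_*\bigl(T_{W_1}(-E)\oplus T_{W_2}(-E)\bigr)\ \subseteq\ T^0_W\ \subseteq\ \nu_*\bigl(T_{W_1}(-\log E)\oplus T_{W_2}(-\log E)\bigr),$$
the left one because a field vanishing on $E$ forms a (zero) compatible pair. The outer quotient is $i_*T_E\oplus i_*T_E$ via $(r_1,r_2)$. For (i) I would identify the cokernel $\nu_*(\cdots)/T^0_W$ with $i_*T_E$ through $r_1-r_2$, the surjectivity being immediate from that of $r_1$ alone. For (ii) I would identify $T^0_W/\nu_*(T_{W_1}(-E)\oplus T_{W_2}(-E))$ with the diagonal copy of $i_*T_E$ via the common restriction $D\mapsto r_1(D_1)=r_2(D_2)$, surjectivity following by lifting a tangent field on $E$ to logarithmic fields on each branch.

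The one genuinely substantive step is the local computation of the second paragraph: that an arbitrary derivation of $\scrO_W$ must preserve each component's ideal and induce \emph{logarithmic}, rather than arbitrary, vector fields. This is precisely where the relation $xy=0$ is used, and it is the point requiring care. The remaining items---that the coordinate constructions glue to the intrinsic restriction maps of sheaves on $W$, and the two surjectivity claims---are routine once the per-component sequence above is in hand.
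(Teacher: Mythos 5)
Your proof is correct and follows essentially the same route as the paper's: a local computation in coordinates where $W=\{xy=0\}$, identifying $T^0_W$ with the kernel of the difference of the two logarithmic restriction maps to $i_*T_E$, from which both exact sequences follow. The only cosmetic difference is that you work directly with derivations of $\scrO_W$, whereas the paper realizes $T^0_W$ as the kernel of $T_{\Cee^{n+1}}|W \to N_{W/\Cee^{n+1}}$ for the ambient embedding and lists explicit generators; the resulting identification is the same.
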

\begin{proof} 

\smallskip
\noindent (i) Locally $W$ can be  embedded  in $\Cee^{n+1}$ with coordinates $z_1, \dots, z_{n+1}$ as the hypersurface defined by $z_1z_2 =0$, $i=1,2$. Set $W_i = \{z_i=0\}$. From the normal bundle sequence
$$0 \to T^0_W \to T_{\Cee^{n+1}}|W \to N_{W/\Cee^{n+1}}=\mathit{Hom}(I_W/I_W^2,  \scrO_W),$$
 the kernel of $T_{\Cee^{n+1}}|W \to N_{W/\Cee^{n+1}}$ is the $\scrO_W$-submodule of $\displaystyle T_{\Cee^{n+1}}|W= \bigoplus_{i=1}^{n+1}\scrO_W\cdot  \frac{\partial}{\partial z_i}$ generated by
$$z_1\frac{\partial}{\partial z_1} , z_2\frac{\partial}{\partial z_2}, \frac{\partial}{\partial z_3}, \dots, \frac{\partial}{\partial z_{n+1}}.$$
This submodule is then more intrinsically identified with the kernel of the surjective homomorphism $\nu_*(T_{W_1}(-\log E) \oplus T_{W_2}(-\log E)) \to i_*T_E$ induced  by the difference of the two natural homomorphisms $T_{W_i}(-\log E) \to  (a_i)_*T_E$, where $a_i \colon E \to W_i$ is the inclusion.

\smallskip
\noindent (ii) In the above notation, the  image of $\nu_*(T_{W_1}(-E) \oplus T_{W_2}(-E))$ in  $\nu_*(T_{W_1}(-\log E) \oplus T_{W_2}(-\log E))$ is generated by
$$\left( z_2\frac{\partial}{\partial z_2}, 0\right), \left(z_2\frac{\partial}{\partial z_3}, 0\right) \dots, \left(z_2\frac{\partial}{\partial z_{n+1}}, 0\right), \left(0, z_1\frac{\partial}{\partial z_1}\right) , \left(0, z_1\frac{\partial}{\partial z_3}\right), \dots, \left(0, z_1\frac{\partial}{\partial z_{n+1}}\right).$$
Thus the image is contained in $T^0_W$ and the quotient of $T^0_W$ by this image is the  $\scrO_E$-module given by $\displaystyle\bigoplus_{i=3}^{n+1} \scrO_E\cdot \frac{\partial}{\partial z_i} = i_*T_E$. 
\end{proof} 

The situation is summarized by the following diagram, where $\Delta \colon  i_*T_E \to  i_*T_E \oplus  i_*T_E$ is the diagonal embedding and $\Sigma\colon  i_*T_E \oplus  i_*T_E \to  i_*T_E$ is the subtraction map:

$$\begin{tikzcd}[column sep=small]
  &  & \scriptstyle 0 \arrow[d] &  \scriptstyle 0 \arrow[d] & \\
   \scriptstyle 0\arrow[r] &\scriptstyle \nu_*(T_{W_1}(-E) \oplus T_{W_2}(-E)) \arrow[r] \arrow[d, "="] & \scriptstyle T^0_W  \arrow[r] \arrow[d] & \scriptstyle i_*T_E\arrow[r]\arrow[d, "\Delta"] & \scriptstyle 0\\
\scriptstyle   0 \arrow[r]  &\scriptstyle \nu_*(T_{W_1}(-E) \oplus T_{W_2}(-E))\arrow[r] & \scriptstyle  \nu_*(T_{W_1}(-\log E) \oplus T_{W_2}(- \log E))\arrow[r]\arrow[d] & \scriptstyle i_*T_E\oplus i_*T_E  \arrow[r]\arrow[d, "\Sigma"]  & \scriptstyle 0 \\
  &   & \scriptstyle  i_*T_E \arrow[r, "="] \arrow[d] &\scriptstyle  i_*T_E \arrow[d] &  \\ 
    & &     \scriptstyle 0 & \scriptstyle 0 &
     \end{tikzcd}$$

\begin{remark} (i) From Lemma~\ref{normalX}(i), there is an exact sequence
\begin{gather*}
H^0(T_{W_1}(-\log E)) \oplus H^0(T_{W_2}(-\log E)) \to H^0(E; T_E) \to H^1(W; T^0_W) \to \\
\to H^1(T_{W_1}(-\log E)) \oplus H^1(T_{W_2}(-\log E)) \to H^1(E; T_E).
\end{gather*}
Geometrically, this says the following: a first order  locally trivial deformation of $W$ determines first order deformations of the two pairs $(W_1,E)$ and $(W_2, E)$ which induce the same first order deformations of $E$. A first order deformation of $W$ which induces a trivial first order deformation  of the   pairs $(W_1,E)$ and $(W_2, E)$ comes from $H^0(E; T_E)$, the tangent space to $\Aut E$, i.e.\ a first order deformation of the gluing of $E\subseteq W_1$ to $E\subseteq W_2$, modulo those induced by a  first order automorphism of either $(W_1,E)$ or $(W_2, E)$.

\smallskip
\noindent (ii) The exact sequences of   Lemma~\ref{normalX}(i) and (ii)  yield  a  commutative diagram
$$\begin{tikzcd}[column sep=small]
 \scriptstyle H^1(W; T^0_W) \arrow[r]  \arrow[d]   &\scriptstyle   H^1(E;T_E)\arrow[r] \arrow[d]    &  \scriptstyle H^2(W_1; T_{W_1}(-E)) \oplus H^2(W_2; T_{W_2}(-E))\arrow[d, "="]  \\
\scriptstyle  H^1(W_1; T_{W_1}(-\log E)) \oplus H^1(W_2; T_{W_2}(-\log E))\arrow[r]  & \scriptstyle  H^1(E;T_E)\oplus  H^1(E;T_E) \arrow[r]  &\scriptstyle  H^2(W_1; T_{W_1}(-E)) \oplus H^2(W_2; T_{W_2}(-E)) .
 \end{tikzcd}$$
This gives a compatibility between the coboundary map 
$$\partial \colon H^1(E;T_E) \to H^2(W_1; T_{W_1}(-E)) \oplus H^2(W_2; T_{W_2}(-E))$$ and  $(\partial_1, \partial_2)$, where the $\partial_i\colon H^1(E;T_E) \to H^2(W_i; T_{W_i}(-E))$ are the coboundary maps defined in the previous subsection. 
\end{remark}

\section{A series of examples}\label{section5} 

In this final section, we discuss in further detail  perhaps the simplest example of $0$-liminal singularities, namely singularities of the form $(X,x)$, where $\dim X =n$ and $x$ is an  ordinary singularity of  multiplicity $n+1$. These singularities occur naturally on hypersurfaces of degree $n+2$ in $\Pee^{n+1}$, and we describe the birational geometry and deformation theory of such hypersurfaces in some detail.

\subsection{Definition and first properties}  Let $Y$ be a hypersurface of degree $n+2$ in $\Pee^{n+1}$ with an ordinary $(n+1)$-fold point $x$ and otherwise smooth. In other words, $x$ has multiplicity $n+1$ and the blowup $\hY$ of $Y$ at $x$ has a smooth exceptional divisor $E$ which is embedded as a smooth hypersurface in $\Pee^n$ of degree $n+1$. In this example, $K_{\hY} = \scrO_{\hY}(-E)$. Thus $\pi\colon \hY \to Y$ is a good anticanonical resolution of the singular point.  In particular, by Lemma~\ref{goodisstrict}, the unique singular point of $Y$ is  a $0$-liminal hypersurface singularity.  Projecting from $x$ to $\Pee^n$ defines a degree one morphism $\nu\colon \hY \to \Pee^n$ such that $\nu(E) = \overline{E}$ is a smooth hypersurface of degree $n+1$. Since $\hY$ is a blowup of $\Pee^n$, it follows easily from Remark~\ref{rmkstrict} and the fact that $Y$ is a hypersurface of degree $n+2$ in $\Pee^{n+1}$ that $Y$ is a strict $0$-liminal Calabi-Yau variety.

To reverse the above construction, let $\overline{E} \subseteq \Pee^n$ be a smooth hypersurface of degree $n+1$ (hence Calabi-Yau), let $B\subseteq \Pee^n$ be a hypersurface of degree $n+2$ such that the complete intersection $A = \overline{E}\cap B$ is smooth, let $\nu \colon \hY\to \Pee^n$ be the blowup of $\Pee^n$ along $A$ with exceptional divisor $R$, and finally set $E$ to be the proper transform of $\overline{E}$ in $\hY$. Then we have the following:

\begin{align*}
E &= \nu^*\overline{E} - R;\\
\scrO_{\hY}(E) &= \nu^*\scrO_{\Pee^n}(n+1) \otimes \scrO_{\hY}(-R);\\
K_{\hY} &= \nu^*K_{\Pee^n}\otimes \scrO_{\hY}(R)= \nu^*\scrO_{\Pee^n}(-(n+1)) \otimes \scrO_{\hY}(R)= \scrO_{\hY}(-E);\\
N_{E/\hY} &= \nu^*\scrO_{\Pee^n}(n+1) \otimes \scrO_{\hY}(-R)|E = \scrO_E(n+1) \otimes \scrO_E(-A) = \scrO_E(n+1) \otimes\scrO_E(-(n+2) ) = \scrO_E(-1).
\end{align*}

Define the line bundle $L$ on $\hY$ by:
$$L =\nu^*\scrO_{\Pee^n}(n+2) \otimes \scrO_{\hY}(-R).$$
Thus $L|E =\scrO_E$ and $L\otimes \scrO_{\hY}(-E) = \nu^*\scrO_{\Pee^n}(1)$.

From above and the exact sequence
$$0 \to L\otimes \scrO_{\hY}(-E) \to L \to L|E \to 0$$
and the fact that $H^1(\hY; \nu^*\scrO_{\Pee^n}(1)) = H^1(\Pee^n; \scrO_{\Pee^n}(1)) =0$, we see that $h^0(\hY;L) = n+2$, the linear system defined by $L$ has no base points and defines a morphism $\varphi_L \colon \hY \to \Pee^{n+1}$ which contracts $E$. From the isomorphism $H^0(\hY; L) = H^0(\Pee^n; \scrO_{\Pee^n}(n+2) \otimes I_A)$, we can identify a section of $L$ with a hypersurface in $\Pee^n$ of degree $n+2$ containing $A$. Here, if $T$ is a hypersurface of degree $n+2$ containing $A$, then $T' =\nu^*T -R$ is a section of $L$. Conversely, if $T'$ is a section of $L$, then $\nu_*T'$ is a hypersurface in $\Pee^n$ of degree $n+2$ containing $A$.  Then from the exact sequence
$$0 \to \scrO_{\hY} \to L \to L|T' \to 0,$$
it follows that $\varphi_L$ embeds $T'$ as a hypersurface of degree $n+2$ in a hyperplane in $\Pee^{n+1}$, and hence  the image of $\varphi_L$ has degree $n+2$. Explicitly, if $F$ is a homogeneous polynomial of degree $n+1$ defining a smooth  hypersurface $\overline{E}$  and $G$ is a homogeneous polynomial of degree $n+2$ defining a  hypersurface  $B$, then, for homogeneous coordinates $t_0, \dots, t_n$ on $\Pee^n$,  a basis of $H^0(\Pee^n; \scrO_{\Pee^n}(n+2) \otimes I_A)$ is given by $G, t_0F, \dots, t_nF$  and the rational map $\Pee^n \dasharrow \Pee^{n+1}$ is given by
$$t = (t_0, \dots, t_n) \mapsto (G(t),  t_0F(t), \dots, t_nF(t)).$$
Let $z_0, \dots, z_{n+1}$ be homogeneous coordinates on $\Pee^{n+1}$. Since
$$G( t_0F(t), \dots, t_nF(t)) = F(t)^{n+2}G(t_0, \dots, t_n) = F(t_0F(t), \dots, t_nF(t))G(t_0, \dots, t_n), $$
the image $Y$ of $\varphi_L$ is contained in the hypersurface defined by $z_0F(z_1, \dots, z_{n+1}) -  G(z_1, \dots, z_{n+1})$ and hence is equal to it. 

\begin{remark}\label{remark5.1}  (i)  The polynomial $G$ is only determined up to addition by a polynomial of the form $\ell F$, where $\ell$ is a linear polynomial. Replacing $G$ by $G + \ell F$ replaces $z_0$ by the linear coordinate $z_0 -\ell(z_1, \dots, z_{n+1})$.

\smallskip
  \noindent (ii) It is easy to check that, if $A$ and $\overline{E}$ are both smooth, then $Y$ has the unique singular point $(1,0, \dots, 0)$.

\smallskip
  \noindent (iii) Conversely, if $Y\subseteq \Pee^{n+1}$ is a hypersurface of  degree $n+2$ with an ordinary $(n+1)$-fold point at $(1, 0, \dots, 0)$ then $Y$ is given by an equation of the form $z_0F(z_1, \dots, z_{n+1}) -  G(z_1, \dots, z_{n+1})$ where $F$ defines a smooth hypersurface $\overline{E}$ in $\Pee^n$, and $p_0=(1, 0, \dots, 0)$ is the unique singular point of $Y$ $\iff$ the hypersurface $G=0$ meets $\overline{E}$ transversally. In this case, the projection $(z_0, z_1, \dots, z_{n+1})\mapsto (z_1, \dots, z_{n+1})$ defines a morphism $\nu_0 \colon Y - \{p_0\} \to \Pee^n$. At a point $a=(a_1, \dots, a_{n+1}) \in \Pee^n$ where $F(a)\neq 0$, the fiber $\nu_0^{-1}(a)$ is the   point $(G(a)/F(a), a_1, \dots, a_{n+1})$. If $F(a) = 0$ and $(z_0, a) \in Y$, then $G(a) =0$ as well, and the fiber $\nu_0^{-1}(a)$ is the line spanned by $p_0$ and $(0,a)$ minus the point $p_0$. This sets up an isomorphism $\Pee^n - \overline{E} \cong Y - C(\overline{E})= Y-C(A)$, where $C(\overline{E})$ is the projective cone over $\overline{E}$:
$$C(\overline{E}) = \{(z_0, \dots, z_{n+1}) \in \Pee^{n+1}:  (z_1, \dots, z_{n+1}) \in \overline{E}\},$$
and $C(A)$ is similarly defined.

\smallskip
  \noindent (iv) With $F$ as above,   a homogeneous polynomial of degree $n+1$ defining a smooth hypersurface $\overline{E}$ in $\Pee^n$, the equation $F=0$ defines an isolated weighted homogeneous singularity $X_0$ in $\Cee^{n+1}$, the affine cone over $\overline{E}$. Using the $\Cee^*$-action, if $G$ is homogeneous of degree $n+1$, the singularity defined by $F -G$ is a small deformation of the weighted homogeneous singularity and its derivative lies in the weight $1$ subspace of $H^0(X_0, T^1_{X_0})$. Moreover, this derivative is zero $\iff$ $G$ is in the degree $n+2$ graded piece of the homogeneous Jacobian ideal of $F$. It is easy to give concrete examples where the hypersurface defined by $z_0F -G$ has a unique singularity which is weighted homogeneous. One such example is the hypersurface defined by $z_0\Big(\sum_{i=1}^{n+1}z_i^{n+1}\Big) - (\sum_{i=1}^{n+1}z_i^{n+2})$. 
  
  \smallskip
  \noindent (v)  If the singularity of $z_0F -G$  is weighted homogeneous, then by Corollary~\ref{wtdcor} it satisfies the assumption of Theorem~\ref{maintheorem}(vi): the map $K' \to H^2_E( \hX; \Omega^2_{\hX}(\log E))$ is an isomorphism. It is natural to ask if this always holds. 
  \end{remark}

\subsection{Deformation theory} We begin with a straightforward calculation. 

\begin{lemma} With $Y$ as above, $\mathbb{T}^0_Y =0$, i.e.\ $\Aut Y$ is finite.
\end{lemma}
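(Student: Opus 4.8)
The plan is to reduce the computation of $\mathbb{T}^0_Y$ to the tangent bundle of the resolution and then to a statement about linear vector fields on $\Pee^n$. By Lemma~\ref{T0}, since $Y$ has an isolated lci singularity and $\pi\colon \hY \to Y$ is equivariant, $\mathbb{T}^0_Y = H^0(Y; T^0_Y) \cong H^0(\hY; T_{\hY})$, so it suffices to prove $H^0(\hY; T_{\hY}) = 0$. I would exploit the second description of $\hY$ from the construction above, namely that $\nu\colon \hY \to \Pee^n$ is the blowup of $\Pee^n$ along the smooth codimension-two center $A = \overline E \cap B$. For a blowup of a smooth variety along a smooth center, $\nu$ is an isomorphism over $\Pee^n \setminus A$ and $A$ has codimension $\ge 2$, so $\nu_*T_{\hY}$ is the reflexive subsheaf of $T_{\Pee^n}$ consisting of the vector fields tangent to $A$ (those derivations preserving the ideal sheaf $\mathcal I_A$); hence $H^0(\hY; T_{\hY}) = H^0(\Pee^n; \nu_*T_{\hY})$ is the space of global vector fields on $\Pee^n$ preserving $A$.

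Next I would translate ``tangent to $A$'' into a condition on $\overline E$. Write such a vector field as (the class of) a linear field $\tilde v = \sum_i \ell_i \,\partial/\partial z_i$ on $\Cee^{n+1}$, with $\ell_i$ linear forms, well defined modulo the Euler field $\theta = \sum_i z_i\,\partial/\partial z_i$. Since $A$ is the complete intersection of $\overline E = \{F = 0\}$ and $B = \{G = 0\}$ with $\deg F = n+1$ and $\deg G = n+2$, its saturated homogeneous ideal is $(F,G)$, whose degree $n+1$ part is the line $\Cee\cdot F$; consequently $\overline E$ is the unique hypersurface of degree $n+1$ through $A$. As $\tilde v$ is degree-preserving and carries $(F,G)$ into itself, $\tilde v(F)$ lies in $(F,G)_{n+1} = \Cee\cdot F$, so $\tilde v(F) = cF$ for some constant $c$.

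The crux is then a syzygy computation. Using the Euler relation $(n+1)F = \sum_i z_i\,\partial F/\partial z_i$, the identity $\tilde v(F) = cF$ becomes $\sum_i\big(\ell_i - \tfrac{c}{n+1}z_i\big)\,\partial F/\partial z_i = 0$, i.e.\ a syzygy with linear coefficients among the partials $\partial F/\partial z_i$. Because $\overline E$ is smooth, the partials have no common zero in $\Pee^n$, so $(\partial F/\partial z_0, \dots, \partial F/\partial z_n)$ is a regular sequence of forms of degree $n \ge 2$; by the Koszul resolution its module of first syzygies is generated in degree $n$, and therefore admits no nonzero syzygy with linear coefficients. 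Hence $\ell_i = \tfrac{c}{n+1}z_i$ for all $i$, so $\tilde v$ is a multiple of $\theta$ and $v = 0$ in $H^0(\Pee^n; T_{\Pee^n})$. This forces $H^0(\hY; T_{\hY}) = 0$, whence $\mathbb{T}^0_Y = 0$ and $\Aut Y$ is finite.

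The main obstacle I expect lies in the bookkeeping of the first two steps: verifying cleanly that $\nu_*T_{\hY}$ is exactly the sheaf of vector fields tangent to $A$, and that tangency to $A$ propagates to tangency to $\overline E$ via the one-dimensionality of $(F,G)_{n+1}$. The syzygy step is the mathematical heart but is routine once smoothness of $\overline E$ is used to produce the regular sequence; it is precisely the infinitesimal form of the classical Matsumura--Monsky fact that a smooth hypersurface of degree $\ge 3$ has finite projective automorphism group, which one could alternatively invoke directly.
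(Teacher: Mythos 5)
Your proof is correct, and while it begins exactly as the paper does --- invoking Lemma~\ref{T0} to reduce to $H^0(\hY;T_{\hY})=0$, pushing forward along $\nu\colon \hY\to\Pee^n$, and identifying $H^0(\Pee^n;\nu_*T_{\hY})$ with the global vector fields on $\Pee^n$ tangent to $A$ (the paper packages this as the exact sequence $0\to\nu_*T_{\hY}\to T_{\Pee^n}\to N_{A/\Pee^n}\to 0$) --- the core vanishing step is genuinely different. The paper argues cohomologically: it factors $H^0(\Pee^n;T_{\Pee^n})\to H^0(A;N_{A/\Pee^n})$ through $H^0(\overline{E};N_{\overline{E}/\Pee^n})$ and its restriction to $A$, with injectivity at the first stage resting on the known vanishing $H^0(\overline{E};T_{\overline{E}})=0$ for the Calabi--Yau hypersurface $\overline{E}$. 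You instead observe that tangency to $A$ forces tangency to $\overline{E}$ because $(F,G)_{n+1}=\Cee\cdot F$, and then kill the resulting linear vector field by the Euler relation plus the fact that the partials of $F$, having no common zero, form a regular sequence whose first syzygies live in degree $n\ge 3>1$. This is the infinitesimal Matsumura--Monsky argument, as you note. What each route buys: the paper's is shorter given the standard input $H^0(\overline{E};T_{\overline{E}})=0$ and stays entirely in the language of normal bundle sequences used elsewhere in the section; yours is more self-contained (it effectively reproves that vanishing) and makes the mechanism completely explicit. Your reduction from $A$ to $\overline{E}$ via the degree count on the saturated ideal is a clean substitute for the paper's chain of restriction maps. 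The only point to tighten is the identification of $\nu_*T_{\hY}$ with derivations preserving $\mathcal{I}_A$, which you flag yourself; for the vanishing you only need the inclusion $H^0(\nu_*T_{\hY})\subseteq\{v: v(\mathcal{I}_A)\subseteq\mathcal{I}_A\}$, which follows from the cited exact sequence, so there is no actual gap.
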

\begin{proof}   Since $\pi$ is the blowup of $Y$ at a singular point, it is an equivariant resolution (cf.\ \cite[Proposition 1.2]{BurnsWahl}). Thus, by Lemma~\ref{T0},  
$$\mathbb{T}^0_Y = H^0(Y; T^0_Y) = H^0(Y; \pi_*T_{\hY}) = H^0(\hY; T_{\hY}).$$
So it suffices to show that $H^0(\hY; T_{\hY}) =0$. We have  $H^0(\hY; T_{\hY}) = H^0(\Pee^n; \nu_*T_{\hY}).$ By a standard result (see e.g.\ \cite[Lemma 5.1]{FL22b}), the morphism $\nu\colon \hY \to \Pee^n$ yields an exact sequence
$$0 \to \nu_*T_{\hY} \to T_{\Pee^n} \to N_{A/\Pee^n} \to 0.$$
Thus it suffices to prove that the map $H^0(\Pee^n; T_{\Pee^n}) \to H^0(A; N_{A/\Pee^n})$ is injective. In fact, the map $H^0(\Pee^n; T_{\Pee^n}) \to H^0(\overline{E}; N_{\overline{E}/\Pee^n})$ is already injective as follows from the isomorphism $H^0(\Pee^n; T_{\Pee^n}) \cong  H^0(\overline{E}; T_{\Pee^n}|\overline{E})$ and the fact that $ H^0(\overline{E}; T_{\Pee^n}|\overline{E}) \to H^0(\overline{E}; N_{\overline{E}/\Pee^n})$ is injective as $H^0(\overline{E}; T_{\overline{E}}) =0$.    The restriction $H^0(\overline{E}; N_{\overline{E}/\Pee^n}) \to H^0(A; N_{\overline{E}/\Pee^n}|A)$ is injective as one sees from the exact sequence
$$0 \to \scrO_{\overline{E}}(-1) \to N_{\overline{E}/\Pee^n} = \scrO_{\overline{E}}(n+1) \to N_{\overline{E}/\Pee^n}|A \to 0.$$
Thus the composition $H^0(\Pee^n; T_{\Pee^n}) \to  H^0(A; N_{\overline{E}/\Pee^n}|A)$ is injective. On the other hand,  the composition $H^0(\Pee^n; T_{\Pee^n}) \to  H^0(A; N_{\overline{E}/\Pee^n}|A)$ is also given as a composition
$$H^0(\Pee^n; T_{\Pee^n}) \to H^0(A; N_{A/\Pee^n}) \to H^0(A; N_{\overline{E}/\Pee^n}|A).$$
Hence $H^0(\Pee^n; T_{\Pee^n}) \to H^0(A; N_{A/\Pee^n})$ is injective, and thus $ H^0(Y; T^0_Y) = H^0(\Pee^n; \nu_*T_{\hY}) =0$.
\end{proof} 

Next we recall a standard lemma about hypersurfaces in $\Pee^{n+1}$:

\begin{lemma}\label{hyperlemma}   Let $Y$ be a reduced hypersurface of degree $d$ in $\Pee^{n+1}$. 
\begin{enumerate}
\item[\rm(i)] If $n \ge 3$, the natural map $H^0(Y; N_{Y/\Pee^{n+1}}) = H^0(Y; \scrO_Y(d)) \to \mathbb{T}^1_Y$ is surjective. Hence the image of $\mathbb{T}^1_Y$ in $H^0(Y; T^1_Y)$ is the same as the image of the natural map 
$$H^0(Y; N_{Y/\Pee^{n+1}}) \to H^0(Y; N_{Y/\Pee^{n+1}}/JN_{Y/\Pee^{n+1}}),$$
where $J$ is the Jacobian ideal: at a point  $y\in Y$ where $Y$ is defined by the local equation $f=0$ in some neighborhood of $y$ in $\Pee^{n+1}$, $J $ is generated by the partial derivatives of $f$. 
\item[\rm(ii)] For $n> 3$,  $\mathbb{T}^2_Y =0$, so that the deformations of $Y$ are unobstructed. For $n =3$ and $d=5$, $\mathbb{T}^2_Y  \neq 0$, but nonetheless the deformations of $Y$ are unobstructed. 
\end{enumerate}
\end{lemma}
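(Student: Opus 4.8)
The plan is to deduce both parts from a single long exact sequence attached to the cotangent complex of the lci subscheme $Y\subseteq P:=\Pee^{n+1}$, combined with the cohomology of line bundles and of $T_P|Y$ on a hypersurface. Write $N=N_{Y/P}=\scrO_Y(d)$. Since $Y$ is a reduced hypersurface it is a local complete intersection, so $R\mathcal{H}om$ of its cotangent complex into $\scrO_Y$ is computed by the two-term complex $C^\bullet=[\,T_P|Y\xrightarrow{\phi}N\,]$ placed in degrees $0,1$, where $\phi$ is contraction against the differential of the defining equation. Here $\mathcal{H}^1(C^\bullet)=\Coker\phi=N/JN=T^1_Y$ and $\mathbb{T}^i_Y=\mathbb{H}^i(Y;C^\bullet)$. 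The brutal truncation $0\to N[-1]\to C^\bullet\to T_P|Y\to 0$ then yields
$$\cdots \to H^{i-1}(T_P|Y)\xrightarrow{\phi} H^{i-1}(N)\to \mathbb{T}^i_Y\to H^i(T_P|Y)\xrightarrow{\phi} H^i(N)\to\cdots,$$
in which the arrow $H^0(N)\to\mathbb{T}^1_Y$ is precisely the map from embedded to abstract first-order deformations.

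First I would record the two cohomological inputs. From $0\to\scrO_P(k-d)\to\scrO_P(k)\to\scrO_Y(k)\to 0$ one gets $H^i(Y;\scrO_Y(k))=0$ for $0<i<n$ and all $k$; and restricting the Euler sequence, $0\to\scrO_Y\to\scrO_Y(1)^{\oplus(n+2)}\to T_P|Y\to 0$, gives $H^i(Y;T_P|Y)=0$ for $1\le i\le n-2$. For part (i), surjectivity of $H^0(N)\to\mathbb{T}^1_Y$ is equivalent to the vanishing of $\mathbb{T}^1_Y\to H^1(T_P|Y)$, which holds because $H^1(Y;T_P|Y)=0$ as soon as $n\ge 3$. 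Granting surjectivity, the image of $\mathbb{T}^1_Y$ in $H^0(Y;T^1_Y)$ equals the image of the composite $H^0(N)\to\mathbb{T}^1_Y\to H^0(T^1_Y)$; since the second arrow is the edge map of the hypercohomology spectral sequence of $C^\bullet$, this composite is just $H^0$ applied to the sheaf quotient $N\to N/JN$, which is the asserted natural map $H^0(Y;N_{Y/P})\to H^0(Y;N_{Y/P}/JN_{Y/P})$.

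For part (ii) with $n>3$, the same sequence traps $\mathbb{T}^2_Y$ between $H^1(N)=H^1(Y;\scrO_Y(d))$ and $H^2(Y;T_P|Y)$; the former vanishes for $n\ge 2$ and the latter vanishes once $2\le n-2$, i.e. $n\ge 4$, so $\mathbb{T}^2_Y=0$ and $\mathbf{Def}_Y$ is unobstructed. For $n=3$, $d=5$ the obstruction space is genuinely nonzero: here $H^1(N)=0$ and $H^2(Y;\scrO_Y(d))=0$, while $H^2(Y;T_P|Y)\cong H^3(Y;\scrO_Y)\cong\Cee$ by the Euler sequence and Serre duality (using $\omega_Y=\scrO_Y(d-n-2)=\scrO_Y$ for $n=3$, $d=5$), whence the long exact sequence forces $\mathbb{T}^2_Y\cong H^2(Y;T_P|Y)\cong\Cee\neq 0$. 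To nonetheless conclude unobstructedness I would compare $\mathbf{Def}_Y$ with the functor of embedded (hypersurface) deformations, whose base is the smooth linear system $|\scrO_P(d)|$ and whose obstruction space lies in $H^1(Y;N)=0$. The connecting map $H^1(N)\to\mathbb{T}^2_Y$ above is the induced comparison of obstruction theories; with source $0$ it is injective, and $H^0(N)\to\mathbb{T}^1_Y$ is surjective, so the natural transformation from the smooth embedded deformation functor to $\mathbf{Def}_Y$ is smooth, and therefore $\mathbf{Def}_Y$ is smooth.

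The main obstacle, and essentially the only nonformal point, is this last step: verifying that the connecting homomorphism $H^1(N)\to\mathbb{T}^2_Y$ really is the comparison map of obstruction theories between the embedded and abstract deformation functors, so that the standard criterion (surjective on tangent spaces, injective on obstruction spaces) applies and smoothness descends to $\mathbf{Def}_Y$. Everything else reduces to the cotangent-complex long exact sequence together with routine line-bundle and Euler-sequence cohomology on a hypersurface.
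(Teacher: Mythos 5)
Your proof is correct and follows essentially the same route as the paper: the long exact sequence you extract from the two-term complex $[\,T_{\Pee^{n+1}}|Y\to N_{Y/\Pee^{n+1}}\,]$ is exactly the one the paper obtains by applying $\Ext^\bullet(-,\scrO_Y)$ to the conormal sequence, and the vanishings come from the Euler sequence and standard hypersurface cohomology just as in the paper. You merely spell out the details the paper leaves implicit, including the comparison with the (smooth) embedded deformation functor via the surjectivity on tangent spaces and the vanishing of $H^1(Y;N_{Y/\Pee^{n+1}})$ in the case $n=3$, $d=5$.
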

\begin{proof}
From the conormal exact sequence
$$0 \to I_Y/I_Y^2 \to \Omega^1_{\Pee^{n+1}}|Y \to \Omega^1_Y \to 0,$$
we get a long exact sequence
$$ \cdots \to \mathbb{T}^i_Y \to H^i(Y; T_{\Pee^{n+1}}|Y) \to H^i(Y; \scrO_Y(d)) \to \cdots $$
The result  then follows by restricting the Euler exact sequence to $Y$ and standard results about the cohomology of $\Pee^{n+1}$ and hypersurfaces in $\Pee^{n+1}$.
\end{proof}

We turn next to the picture described in \S\ref{subsection3.5}. Note that $H^1(\hY; \Omega^{n-1}_{\hY}) \cong H^0(A; \Omega^{n-2}_A)$  because $\hY$ is the blowup of $\Pee^n$ along $A$ and by standard formulas for the cohomology of a blowup (e.g.\ \cite[Theorem 7.31]{Voisinbook}). Since $H^0(E; T_E) =0$, there is an exact sequence
$$0 \to H^1(\hY;T_{\hY}(-E)) \to  H^1(\hY; T_{\hY}(-\log E)) \to \Ker \partial \to 0,$$ 
where $H^1(\hY;T_{\hY}(-E)) \cong H^1(\hY; \Omega^{n-1}_{\hY}) \cong H^0(A; \Omega^{n-2}_A)$ and $\partial$ is defined in the discussion before Proposition~\ref{prop4.4}. Since  $A$ is a divisor in the Calabi-Yau manifold $\overline{E}$,  $\Omega^{n-2}_A = K_A =\scrO_A(A)$ and we have the exact sequence
$$0 \to \scrO_{\overline{E}} \to \scrO_{\overline{E}}(A) \to \scrO_A(A) \to 0.$$
 Thus $\dim H^0(A; \Omega^{n-2}_A) = h^0(\overline{E}; \scrO_{\overline{E}}(A)) - 1$ is the dimension of the linear system $|A|$ (viewed as a divisor on $\overline{E}$). From the exact sequence
 $$0 \to \scrO_{\Pee^n}(1) \to  \scrO_{\Pee^n}(n+2) \to \scrO_{\overline{E}}(A) \to 0,$$
 it follows that
 $$\dim H^0(A; \Omega^{n-2}_A) = \dim |A| = \binom{2n+2}{n} -(n+1) -1.$$

Since $\partial$ is identified with the Gysin map $\gamma \colon H^1(E; \Omega^{n-2}_E) \to H^2(\hY; \Omega^{n-1}_{\hY})$, $\Ker \partial = \Ker \gamma = H^1_0(E; \Omega^{n-2}_E)$, say, where $H^1_0(E; \Omega^{n-2}_E)$ denotes the primitive cohomology, and thus  is equal to $H^1(E; \Omega^{n-2}_E)$ unless $n=3$.  If $n=3$, $H^2(\hY; \Omega^{n-1}_{\hY})\cong H^2(\Pee^3; \Omega^2_{\Pee^3})\oplus H^1(A; \Omega^{n-2}_A)$ which has dimension $2$, and $ H^1(E; \Omega^1_E)$ has dimension $20$ since $E$ is a $K3$ surface. Thus $\dim \Ker \gamma = 19$, the dimension of the moduli space of quartic $K3$ surfaces.  
For $n > 3$, $H^2(\hY; \Omega^{n-1}_{\hY})\cong   H^1(A; \Omega^{n-2}_A)=0$, since $A$ is a complete intersection of dimension $n-2$. Thus  $H^1(E; \Omega^{n-2}_E) \cong H^1(E; T_E)$ is the number of moduli of the Calabi-Yau hypersurface $\overline{E}$ in $\Pee^n$, namely $\displaystyle \binom{2n+1}{n+1} - (n+1)^2$. Of course, this also gives the correct answer for the number of moduli of quartic $K3$ surfaces in case $n=3$. In all cases,
$$\dim H^1(\hY; T_{\hY}(-\log E)) = \dim |A|  + \dim H^1_0(E; \Omega^{n-2}_E) =  \binom{2n+2}{n} + \binom{2n+1}{n+1} -(n^2+3n +3),$$
i.e.\ the number of moduli of the divisor $A$ on $\overline{E}$ plus the number of moduli of the hypersurface $\overline{E}$. 

We also have the local cohomology sequence of \S\ref{subsection3.2}:
$$ 0\to  H^1(\hY; \Omega^{n-1}_{\hY}(\log E)) \to  H^1(V; T_V) \to H^2_E(\hY; \Omega^{n-1}_{\hY}(\log E)) \to  H^2(\hY; \Omega^{n-1}_{\hY}(\log E)).$$
Arguments as above show that $H^2(\hY; \Omega^{n-1}_{\hY}(\log E)) =0$ if $n > 3$ and $\dim H^2(\hY; \Omega^2_{\hY}(\log E)) =1$ for $n=3$. As for the other terms, 
 $H^1(\hY; \Omega^{n-1}_{\hY}(\log E) ) \cong H^1(\hY; T_{\hY}(-\log E))$ and 
 $$\dim H^2_E(\hY; \Omega^{n-1}_{\hY}(\log E)) = b^{1, n-2} = s_1.$$  Note that $s_1$ remains constant in a $\mu = \text{const}$ deformation, so that  we can calculate $\mu$ by calculating the corresponding value for a weighted homogeneous example, for example the Fermat singularity $X_0$ defined by 
$z_1^{n+1}  + \cdots + z_{n+1}^{n+1} = 0$. Here
$$H^0(X_0, T^1_{X_0}) = \Cee[z_1, \dots, z_{n+1}]/(z_1^n, \dots, z_{n+1}^n) \cong \Cee[z_1]/(z_1^n) \otimes \cdots \otimes \Cee[ z_{n+1}]/( z_{n+1}^n).$$
Thus $\dim H^0(X_0, T^1_{X_0}) = n^{n+1}$. As an element of $H^0(X_0, T^1_{X_0})$,  $z_i$ has $\Cee^*$-weight $-(n+1)+1=-n$. 

By Proposition~\ref{wtdcase},  the dimension of  $H^2_E(\hX_0; \Omega^{n-1}_{\hX_0}(\log E))$   is the dimension of the negative weight space of $H^0(X_0, T^1_{X_0})$.  If we set $\dim H^2_E(\hX_0; \Omega^{n-1}_{\hX_0}(\log E)) = t_-$, then for $n> 3$,
\begin{equation}\label{4.00}
\binom{2n+2}{n} + \binom{2n+1}{n+1} -(n^2+3n +3) + t_- = \dim H^1(V; T_V) = \dim \mathbb{T}^1_Y = \binom{2n+3}{n+2} - (n+2)^2.
\end{equation}
The term $t_-$ is the number of ordered $(n+1)$-tuples of   integers $a_1, \dots, a_{n+1}$ with $0\le a_i \le n-1$ such that $\sum_ia_i \le n$. Without the condition that $a_i \le n-1$, this number is the same as the number of homogeneous polynomials in $z_0, \dots, z_{n+1}$ of degree $n$. Thus
$$b^{1, n-2} = t_- = \binom{2n+1}{n} - (n+1).$$
Since  $n^2+3n +3 + (n+1) = (n+2)^2$, (\ref{4.00}) is equivalent to  the identity
$$\binom{2n+2}{n} + \binom{2n+1}{n+1} +\binom{2n+1}{n} = \binom{2n+2}{n} + \binom{2n+2}{n+1} = \binom{2n+3}{n+1}= \binom{2n+3}{n+2} .$$

\begin{remark} (i) For $n=3$, because 
$$101 = 51 + 19 + 31 = \dim  H^1(\hY; T_{\hY}(-\log E)) + b^{1, n-2} ,$$
 it follows that the map $H^1(V; T_V) \to H^2_E(\hY; \Omega^2_{\hY}(\log E))$  is surjective, even though $H^2(\hY; \Omega^2_{\hY}(\log E)) $ is not zero in this case.

\smallskip
\noindent (ii) If we would like to replace the variety $Y$ by a $d$-semistable normal crossing model as in \S\ref{subsection3.6}, it is natural to consider pairs $(W_2, E)$, where $W_2$ is a smooth projective variety of dimension $n$ with $K_{W_2} = \scrO_{W_2}(-E)$, such that $E$ is an ample divisor with $N_{E/W_2} = \scrO_E(1)$. The moduli space of pairs $(W_2, E)$, where $W_2$ is a smooth hypersurface of degree $n+1$ in $\Pee^{n+1}$ and $E$ is a hyperplane section, is easily seen to be a weighted projective space: 
write the defining equation for $W_2$ as
$$F + z_0G_n + \cdots + z_0^{n+1}G_0,$$
where $F, G_i\in \Cee[z_1, \dots, z_{n+1}]$  are homogeneous of degrees $n+1$ and $i$ respectively (so that $G_0$ is a constant) and as before $F=0$ defines $\overline{E}\subseteq \Pee^n$. The coordinates $z_1, \dots, z_{n+1}$ are only well-defined up to the coordinate transformation $z_i \mapsto z_i + a_iz_0$, and hence $G_n$ is only well-defined up to the Jacobian ideal of $F$. The remaining freedom is by multiplying $z_0$ by $\lambda \in \Cee^*$, or equivalently by acting on the $(n+1)$-tuple $(G_n, \dots,   G_0)$ by the $\Cee^*$-action with weights $(-1, \dots, -n-1)$. These are exactly the $\Cee^*$-weights for the $\Cee^*$-action on the negative weight space of $T^1_{X_0, 0}$, where $X_0$ is the affine cone over $\overline{E}\subseteq \Pee^n$. The corresponding weighted projective space  is then the usual model for the negative weight deformations of $X_0$.  Geometrically, to pass from the deformations of $Y$ to those of the normal crossings model $\hY \amalg_EW_2$, we need to make a weighted blowup of the locus of equisingular deformations of $Y$, which is roughly speaking a smooth subvariety of the deformation space of codimension $t_-$. 
\end{remark}

 \begin{example}\label{finalexample} Suppose that $Y$ is defined by the polynomial  $z_0\Big(\sum_{i=1}^{n+1}z_i^{n+1}\Big) - (\sum_{i=1}^{n+1}z_i^{n+2} )$ and $x = (1,0, \dots, 0)$. Then it is easy to check as noted in Remark~\ref{remark5.1}(iv)  that
 $$ T^1_{Y,x}\cong \Cee\{z_1, \dots, z_{n+1}\}/(z_1^n, \dots, z_{n+1}^n) \cong \Cee[z_1]/(z_1^n) \otimes \cdots \otimes \Cee[ z_{n+1}]/( z_{n+1}^n).$$
  Thus
   $$H^0(Y;T^1_Y)\cong \{P\in \Cee[z_1, \dots, z_{n+1}]: \deg_{z_i}P \leq n-1 \text{ for all $i$, $1\leq i \leq n+1$}\}.$$
  By Lemma~\ref{hyperlemma}(i), the image of $\mathbb{T}^1_Y$ in $H^0(Y;T^1_Y)$ is then identified with the space of all polynomials $P\in \Cee[z_1, \dots, z_{n+1}]$ such that $\deg_{z_i}P \leq n-1$ for all $i$ and the total degree of $P$ is at most $n+2$.  For $n \ge 3$, this is a proper subspace of $H^0(Y;T^1_Y)$, and in fact the codimension of this subspace grows rather quickly. In particular,  the map $\mathbb{T}^1_Y  \to H^0(Y;T^1_Y)$ fails to be surjective. Thus, by contrast with the situation for $n=2$, for $n\ge 3$  the deformation functor $\mathbf{Def}_Y$ is never smooth over the local deformation functor.
  
  Similar examples can be constructed for generalized Fano hypersurfaces of degree $n+1$ in $\Pee^{n+1}$ or generalized Calabi-Yau hypersurfaces of degree $n+2$ with an ordinary $n$-fold point at $(1,0, \dots, 0)$. In this case the isolated singularity is a rational singularity but it is  not $1$-rational and in fact is strongly $1$-irrational in the terminology of \cite[Definition 2.6]{FL}.  In this case, 
  the map $\mathbb{T}^1_Y  \to H^0(Y;T^1_Y)$  is not surjective as soon as $n \ge 4$  in either the Fano case  or the Calabi-Yau case.
 \end{example}

\bibliography{zerolimref}

\providecommand{\bysame}{\leavevmode\hbox to3em{\hrulefill}\thinspace}
\providecommand{\MR}{\relax\ifhmode\unskip\space\fi MR }
% \MRhref is called by the amsart/book/proc definition of \MR.
\providecommand{\MRhref}[2]{%
  \href{http://www.ams.org/mathscinet-getitem?mr=#1}{#2}
}
\providecommand{\href}[2]{#2}
\begin{thebibliography}{MOPW23}

\bibitem[Bin87]{BingKos}
J.~Bingener, \emph{Lokale {M}odulr\"aume in der analytischen {G}eometrie.
  {B}and 2}, Aspects of Mathematics, vol.~D3, Friedr. Vieweg \& Sohn,
  Braunschweig, 1987, With the cooperation of Siegmund Kosarew.

\bibitem[BW74]{BurnsWahl}
D.~M. Burns, Jr. and J.~M. Wahl, \emph{Local contributions to global
  deformations of surfaces}, Invent. Math. \textbf{26} (1974), 67--88.

\bibitem[CS18]{CS}
S.~Coughlan and T.~Sano, \emph{Smoothing cones over {K}3 surfaces},
  \'{E}pijournal G\'{e}om. Alg\'{e}brique \textbf{2} (2018), Art. 15, 10.

\bibitem[DB81]{duBois}
P.~Du~Bois, \emph{Complexe de de {R}ham filtr\'{e} d'une vari\'{e}t\'{e}
  singuli\`ere}, Bull. Soc. Math. France \textbf{109} (1981), no.~1, 41--81.

\bibitem[DBJ74]{DBJ}
P.~Du~Bois and P.~Jarraud, \emph{Une propri\'{e}t\'{e} de commutation au
  changement de base des images directes sup\'{e}rieures du faisceau
  structural}, C. R. Acad. Sci. Paris S\'{e}r. A \textbf{279} (1974), 745--747.

\bibitem[Eng18]{Engel}
P.~Engel, \emph{Looijenga's conjecture via integral-affine geometry}, J.
  Differential Geom. \textbf{109} (2018), no.~3, 467--495.

\bibitem[FG24]{FG24}
R.~Friedman and P.~Griffiths, \emph{Deformations of {I}-surfaces with elliptic
  singularities}, arXiv:2402.17677, 2024.

\bibitem[FL22]{FL}
R.~Friedman and R.~Laza, \emph{Deformations of singular {F}ano and
  {C}alabi-{Y}au varieties}, to appear in J. Differential Geom.,
  arXiv:2203.04823, 2022.

\bibitem[FL24a]{FL23}
\bysame, \emph{Deformations of {C}alabi-{Y}au varieties with {$k$}-liminal
  singularities}, Forum Math. Sigma \textbf{12} (2024), Paper No. e59, 25.

\bibitem[FL24b]{FL22b}
\bysame, \emph{Deformations of some local {C}alabi-{Y}au manifolds},
  \'Epijournal G\'eom. Alg\'ebrique, Special volume in honour of Claire Voisin
  (2024), Art. 18, 35pp.

\bibitem[FL24c]{FL22d}
\bysame, \emph{The higher {D}u {B}ois and higher rational properties for
  isolated singularities}, J. Algebraic Geom. \textbf{33} (2024), no.~3,
  493--520.

\bibitem[FL24d]{FL22c}
\bysame, \emph{Higher {D}u {B}ois and higher rational singularities}, Duke
  Math. J. \textbf{173} (2024), no.~10, 1839--1881, Appendix by Morihiko Saito.

\bibitem[Fuj11]{Fujino2011}
O.~Fujino, \emph{On isolated log canonical singularities with index one}, J.
  Math. Sci. Univ. Tokyo \textbf{18} (2011), no.~3, 299--323.

\bibitem[GHK15]{GHK}
M.~Gross, P.~Hacking, and S.~Keel, \emph{Mirror symmetry for log {C}alabi-{Y}au
  surfaces {I}}, Publ. Math. Inst. Hautes \'{E}tudes Sci. \textbf{122} (2015),
  65--168.

\bibitem[Gro61]{EGAIII}
A.~Grothendieck, \emph{{\'E}l\'ements de g\'eom\'etrie alg\'ebrique. {III}.
  \'{E}tude cohomologique des faisceaux coh\'erents. {I}.}, Inst. Hautes
  \'Etudes Sci. Publ. Math. (1961), no.~11, 167.

\bibitem[Iac15]{Iacono}
D.~Iacono, \emph{Deformations and obstructions of pairs {$(X,D)$}}, Int. Math.
  Res. Not. IMRN (2015), no.~19, 9660--9695.

\bibitem[IF00]{Iano-Fletcher}
A.~R. Iano-Fletcher, \emph{Working with weighted complete intersections},
  Explicit birational geometry of 3-folds, London Math. Soc. Lecture Note Ser.,
  vol. 281, Cambridge Univ. Press, Cambridge, 2000, pp.~101--173.

\bibitem[Ish85]{Ishii85}
S.~Ishii, \emph{On isolated {G}orenstein singularities}, Math. Ann.
  \textbf{270} (1985), no.~4, 541--554.

\bibitem[Ish12]{Ishii2012}
\bysame, \emph{A supplement to {F}ujino's paper: {O}n isolated log canonical
  singularities with index one}, J. Math. Sci. Univ. Tokyo \textbf{19} (2012),
  no.~1, 135--138.

\bibitem[JKSY22]{JKSY-duBois}
S.-J. Jung, I.-K. Kim, M.~Saito, and Y.~Yoon, \emph{Higher {D}u {B}ois
  singularities of hypersurfaces}, Proc. Lond. Math. Soc. (3) \textbf{125}
  (2022), no.~3, 543--567.

\bibitem[KKP08]{KKP}
L.~Katzarkov, M.~Kontsevich, and T.~Pantev, \emph{Hodge theoretic aspects of
  mirror symmetry}, From {H}odge theory to integrability and {TQFT}
  tt*-geometry, Proc. Sympos. Pure Math., vol.~78, Amer. Math. Soc.,
  Providence, RI, 2008, pp.~87--174.

\bibitem[KLSV18]{KLSV}
J.~Koll\'{a}r, R.~Laza, G.~Sacc\`a, and C.~Voisin, \emph{Remarks on
  degenerations of hyper-{K}\"{a}hler manifolds}, Ann. Inst. Fourier (Grenoble)
  \textbf{68} (2018), no.~7, 2837--2882.

\bibitem[KN94]{KawamataNamikawa}
Y.~Kawamata and Y.~Namikawa, \emph{Logarithmic deformations of normal crossing
  varieties and smoothing of degenerate {C}alabi-{Y}au varieties}, Invent.
  Math. \textbf{118} (1994), no.~3, 395--409.

\bibitem[Loo81]{Looijenga}
E.~Looijenga, \emph{Rational surfaces with an anticanonical cycle}, Ann. of
  Math. (2) \textbf{114} (1981), no.~2, 267--322.

\bibitem[MOPW23]{MOPW}
M.~Musta\c{t}\u{a}, S.~Olano, M.~Popa, and J.~Witaszek, \emph{The {D}u {B}ois
  complex of a hypersurface and the minimal exponent}, Duke Math. J.
  \textbf{172} (2023), no.~7, 1411--1436.

\bibitem[Nam94]{namtop}
Y.~Namikawa, \emph{On deformations of {C}alabi-{Y}au {$3$}-folds with terminal
  singularities}, Topology \textbf{33} (1994), no.~3, 429--446.

\bibitem[Nam02]{namstrata}
\bysame, \emph{Stratified local moduli of {C}alabi-{Y}au threefolds}, Topology
  \textbf{41} (2002), no.~6, 1219--1237.

\bibitem[NS95]{NS}
Y.~Namikawa and J.~H.~M. Steenbrink, \emph{Global smoothing of {C}alabi-{Y}au
  threefolds}, Invent. Math. \textbf{122} (1995), no.~2, 403--419.

\bibitem[Oda21]{Odaka-CY}
Y.~Odaka, \emph{On log minimality of weak {K}-moduli compactifications of
  {C}alabi-{Y}au varieties}, arXiv:2108.03832, 2021.

\bibitem[Pin74]{Pinkham}
H.~C. Pinkham, \emph{Deformations of algebraic varieties with {$G_{m}$}
  action}, Ast\'{e}risque, No. 20, Soci\'{e}t\'{e} Math\'{e}matique de France,
  Paris, 1974.

\bibitem[Pin83]{Pinkham81}
\bysame, \emph{Factorization of birational maps in dimension {$3$}},
  Singularities, {P}art 2 ({A}rcata, {C}alif., 1981), Proc. Sympos. Pure Math.,
  vol.~40, Amer. Math. Soc., Providence, RI, 1983, pp.~343--371.

\bibitem[PS08]{PS}
C.~A.~M. Peters and J.~H.~M. Steenbrink, \emph{Mixed {H}odge structures},
  Ergebnisse der Mathematik und ihrer Grenzgebiete. 3. Folge. A Series of
  Modern Surveys in Mathematics, vol.~52, Springer-Verlag, Berlin, 2008.

\bibitem[Rei80]{Reid79}
M.~Reid, \emph{Canonical {$3$}-folds}, Journ\'{e}es de {G}\'{e}ometrie
  {A}lg\'{e}brique d'{A}ngers, {J}uillet 1979/{A}lgebraic {G}eometry, {A}ngers,
  1979, Sijthoff \& Noordhoff, Alphen aan den Rijn---Germantown, Md., 1980,
  pp.~273--310.

\bibitem[RT09]{RollenskeThomas}
S.~Rollenske and R.~Thomas, \emph{Smoothing nodal {C}alabi-{Y}au {$n$}-folds},
  J. Topol. \textbf{2} (2009), no.~2, 405--421.

\bibitem[Ser06]{Sernesi}
E.~Sernesi, \emph{Deformations of algebraic schemes}, Grundlehren der
  mathematischen Wissenschaften [Fundamental Principles of Mathematical
  Sciences], vol. 334, Springer-Verlag, Berlin, 2006.

\bibitem[{Sta}25]{stacks-project}
The {Stacks project authors}, \emph{The stacks project},
  \url{https://stacks.math.columbia.edu}, 2025.

\bibitem[Ste83]{Steenbrink}
J.~H.~M. Steenbrink, \emph{Mixed {H}odge structures associated with isolated
  singularities}, Singularities, {P}art 2 ({A}rcata, {C}alif., 1981), Proc.
  Sympos. Pure Math., vol.~40, Amer. Math. Soc., Providence, RI, 1983,
  pp.~513--536.

\bibitem[Ste97]{SteenbrinkDB}
\bysame, \emph{Du {B}ois invariants of isolated complete intersection
  singularities}, Ann. Inst. Fourier (Grenoble) \textbf{47} (1997), no.~5,
  1367--1377.

\bibitem[SVV23]{SVV}
W.~Shen, S.~Venkatesh, and A.~Vo, \emph{On $k$-{D}u {B}ois and $k$-rational
  singularities}, arXiv:2306.03977, 2023.

\bibitem[Tos15]{tosatti}
V.~Tosatti, \emph{Families of {C}alabi-{Y}au manifolds and canonical
  singularities}, Int. Math. Res. Not. IMRN (2015), no.~20, 10586--10594.

\bibitem[Voi07]{Voisinbook}
C.~Voisin, \emph{Hodge theory and complex algebraic geometry. {I}}, {E}nglish
  ed., Cambridge Studies in Advanced Mathematics, vol.~76, Cambridge University
  Press, Cambridge, 2007, Translated from the French by Leila Schneps.

\bibitem[Wah76]{Wahl}
J.~M. Wahl, \emph{Equisingular deformations of normal surface singularities.
  {I}}, Ann. of Math. (2) \textbf{104} (1976), no.~2, 325--356.

\bibitem[Wah81]{Wahl81}
\bysame, \emph{Smoothings of normal surface singularities}, Topology
  \textbf{20} (1981), no.~3, 219--246.

\bibitem[Wan03]{wang}
C.L. Wang, \emph{Quasi-{H}odge metrics and canonical singularities}, Math. Res.
  Lett. \textbf{10} (2003), no.~1, 57--70.

\bibitem[Yon90]{Yonemura}
T.~Yonemura, \emph{Hypersurface simple {$K3$} singularities}, Tohoku Math. J.
  (2) \textbf{42} (1990), no.~3, 351--380.

\bibitem[Zha16]{yzhang}
Y.~Zhang, \emph{Completion of the moduli space for polarized {C}alabi-{Y}au
  manifolds}, J. Differential Geom. \textbf{103} (2016), no.~3, 521--544.

\end{thebibliography}
\end{document}